\newcommand{\ol}{\overline}
\newcommand{\3}{\varepsilon}
\newcommand{\4}{\widetilde}
\newcommand{\R}{{\mathbb R}}
\newcommand{\round}[1][t]{\frac{\partial}{\partial {#1}}}
\theoremstyle{plain}
\newtheorem{theorem}{Theorem}[section]
\newtheorem{lemma}[theorem]{Lemma}
\newtheorem{corollary}[theorem]{Corollary}
\theoremstyle{definition}
\newtheorem{remark}[theorem]{Remark}
\numberwithin{equation}{section}
\def\XXint#1#2#3{{\setbox0=\hbox{$#1{#2#3}{%
        \int}$ }
    \vcenter{\hbox{$#2#3$ }}\kern-.6\wd0}}
\title[Asymptotic large time behavior of singular solutions of the fast diffusion equation]{Asymptotic large time behavior of singular solutions of the fast diffusion equation  }
\author{Kin Ming Hui}
\address{Institute of Mathematics, Academia Sinica, Taipei 106319, Taiwan}
\email{kmhui@gate.sinica.edu.tw}
\author{Jongmyeong Kim}
\address{Institute of Mathematics, Academia Sinica, Taipei 106319, Taiwan}
\email{jmkim@gate.sinica.edu.tw}
\subjclass[2020]{35B35, 35B44, 35J70, 35K55, 35K65}
\keywords{fast diffusion equation; singular solution; singular self-similar solutions; large time behaviour}
\begin{document}

\begin{abstract}
Let $n\ge 3$, $0<m<\frac{n-2}{n}$, $\alpha=\frac{2\beta-1}{1-m}$ and $\frac{2}{1-m}<\frac{\alpha}{\beta}<\frac{n-2}{m}$. We give a new direct proof using fixed point method on the existence  of singular radially symmetric forward self-similar solution of the form $V(x,t)=t^{-\alpha} f(t^{-\beta}x)$ $\forall x\in\mathbb{R}^n\setminus\{0\}$, $t>0$, for the fast diffusion equation $u_t=\Delta (u^m/m)$  in $(\mathbb{R}^n\setminus\{0\})\times (0,\infty)$, where 
$f$ satisfies 
\begin{equation*}
\Delta (f^m/m) + \alpha f + \beta x \cdot \nabla f =0 \quad \text{in} \;  \mathbb{R}^n\setminus\{0\}
\end{equation*}
with $\lim_{|x| \to 0} |x|^{ \frac{\alpha}{\beta}}f(x)=A$ and $\lim_{|x| \to \infty}f(x) = D_A$ for some constants $A>0$, $D_A > 0$.  We also obtain an asymptotic expansion of such singular radially symmetric solution $f$ near the origin. We will also prove the asymptotic large time behaviour of the singular solutions of the fast diffusion equation 
$u_t= \Delta (u^m/m)$ in $(\mathbb{R}^n\setminus\{0\})\times (0,\infty)$, $u(x,0)=u_0(x)$ in $\mathbb{R}^n\setminus\{0\}$,  satisfying the  condition
$A_1|x|^{-\gamma}\leq u_0(x)\leq A_2|x|^{-\gamma}$ in $\mathbb{R}^n\setminus\{0\}$, 
for some constants $A_2>A_1>0$ and $n\le\gamma<\frac{n-2}{m}$. 
\end{abstract}

\date{April 26, 2025}

\maketitle


\section{Introduction} 
The equation 
\begin{equation}\label{fde}
u_t= \Delta (u^m/m)
\end{equation}
arises from many physical phenomena. When $m=1$, it is the heat equation. When $m>1$, the equation is called the porous medium equation which arises from diffusion process in porous medium  such as oil passing through sand \cite{Aro06porous} and  as the large time asymptotic limit in the study of the large time behaviour of the solution of the compressible Euler equation with damping \cite{LZ16global}. Porous medium type equation also appears in the study of the Hele-Shaw flow with general initial density \cite{KP18porous}.
When $0<m<1$, it is called the fast diffusion equation. In this case  the equation appears in the modelling of plasma physics \cite{BH82asympt} and diffusion of impurities in silicon \cite{Kin88extrem}, \cite{Kin93self}. 

When $n \ge 3$ and $m=\frac{n-2}{n+2}$, the equation \Cref{fde} appears in the study of Yamabe flow. We say that a metric $g$ on a Riemannian manifold $M$ evolves by  the Yamabe flow on $(0,T)$ \cite{Bre05conver}, \cite{Bre07conver}, \cite{dPS01extinc}, if it satisfies
\begin{equation}\label{eq Yamabe flow}
\round{g} = -Rg 
\end{equation}
on $M$ for any $0<t<T$ where $R$ is the scalar curvature of the metric at time $t$. When the manifold $M$ is conformal to $\mathbb{R}^n$, we can write the metric as $g=u^{\frac{2}{n+2}}dx^2$. Then 
the scalar curvature is given by 
\begin{equation}\label{scalar-curvature}
R=-\frac{4(n-1)}{(n-2)}u^{-1}\Delta u^{\frac{n-2}{n+2}}.
\end{equation}
By \eqref{eq Yamabe flow} and \eqref{scalar-curvature}, $u$ satisfies
\begin{equation*}
u_t=\frac{n-1}{m}\Delta u^m\quad\mbox{ in }\mathbb{R}^n\times (0,T)
\end{equation*}
with $m=\frac{n-2}{n+2}$ which is equivalent to \eqref{fde} with $m=\frac{n-2}{n+2}$ after a rescaling. 
Hence the study of properties of conformal Yamabe flow on $\mathbb{R}^n$ is equivalent to the study of the the properties of the solutions of the equation  \eqref{fde} in the domain $\mathbb{R}^n$ with $m=\frac{n-2}{n+2}$. We refer the reader to the papers by P.~Daskalopoulos, M.~del Pino, J.~King and N.~Sesum, \cite{DDPKS16type}, \cite{DDPKS17new}, S.Y.~Hsu \cite{Hsu12singul}, \cite{Hsu17existe}, M.~del Pino and M.~S\'aez \cite{dPS01extinc}, J.~Takahashi and H.~Yamamoto \cite{TY22infini}, etc. for some of the recent research in this direction. We also refer the readers to the book  \cite{Vaz06smooth} by J.L.~Vazquez  and the book \cite{DK07degene} by P.~Daskalopoulos and Carlos E.~Kenig for some recent results on \eqref{fde}.

As observed by M.A.~Herrero, M.~Pierre and J.L.~Vazquez and others \cite{HP85cauchy}, \cite{Vaz92nonexi}, there are big differences in the behaviour of the solutions of \eqref{fde} for the cases $m>1$, $m_c:=\frac{n-2}{n}<m <1$ and $0<m<\frac{n-2}{n}$. When $m>1$, the solution $u$ of 
\begin{equation}\label{eq fast diff cauchy problem}
\left\{\begin{aligned}
&u_t= \Delta (u^m/m)\quad\mbox{ in }\mathbb{R}^n\times (0,T)\\
&u(x,0)=u_0(x)\quad\mbox{ in }\mathbb{R}^n
\end{aligned}\right.
\end{equation}
with compact bounded initial data $u_0\ge 0$ has $\mbox{supp}\, u(\cdot,t)$ being compact for all $t\in (0,T)$. On the other hand for the supercritical regime $m_c:=\frac{n-2}{n} <m <1$, it is known that for any non-zero non-negative initial data $u_0$ there exists a unique global solution of \eqref{eq fast diff cauchy problem} which is positive for all time \cite{HP85cauchy}. 

When $0<m<m_c$ and $T>0$, there exist solutions of \eqref{fde} in $\mathbb{R}^n\times (0,T)$ which is positive and smooth in $\mathbb{R}^n\times (0,T)$ and vanishes in a finite time $T>0$. For example
the Barenblatt solution \cite{DS08extinc},
\begin{equation*}
B_k(x,t)=(T-t)^{\alpha_1}\left(\frac{C_{\ast}}{k^2+|(T-t)^{\beta_1}x|^2}\right)^{\frac{1}{1-m}}.
\end{equation*}
where $C_{\ast}=\frac{2(n-2-nm)}{1-m}$, $k>0$, $T>0$, 
\begin{equation*}
\beta_1=\frac{1}{n-2-nm}\quad\mbox{ and }\quad\alpha_1=\frac{2\beta_1+1}{1-m},
\end{equation*}
is a self-similar solution of \eqref{fde} in $\mathbb{R}^n\times (0,T)$ which vanishes identically at time $T$.
We refer the readers to the papers by P. Daskalopolous, J. King, M. Sesum, M. S\'aez  \cite{DS08extinc}, \cite{DKS19extinc},   \cite{dPS01extinc} S.Y. Hsu \cite{Hsu12singul,Hsu13existe}, K.M. Hui \cite{Hui02some,Hui08singul,Hui17asympt}, M. Fila, J.L. Vazquez, M. Winkler, E. Yanagida \cite{FVWY12rate,FW16optima,FW16rate,FW17slow}, etc.  for some recent results on \eqref{fde} when $0<m<m_c$.

In this paper we will study the asymptotic large time behaviour of the singular solutions of  \eqref{fde} in $(\mathbb{R}^n\setminus\{0\})\times (0,\infty)$ which blow up at the origin for all time $t>0$.
Since the behaviour of such singular solutions of \eqref{fde} is usually similar to the behaviour of the singular forward self-similar solutions of \eqref{fde} in $(\mathbb{R}^n\setminus\{0\})\times (0,\infty)$ which blow up at the origin. Hence in order to understand the behaviour of the solutions of \eqref{fde} it is important to first study the singular forward self-similar solutions of \eqref{fde} in $(\mathbb{R}^n\setminus\{0\})\times (0,\infty)$. Let
\begin{equation}\label{n-m-relation}
n\ge 3\quad\mbox{ and }\quad 0<m<\frac{n-2}{n}.
\end{equation}
Suppose $V$ is a singular radially symmetric forward self-similar solution of  \eqref{fde} in $(\mathbb{R}^n\setminus\{0\})\times (0,\infty)$ of the form
\begin{equation}\label{eq:one point singular:24}
V(x,t)=t^{-\alpha} f(t^{-\beta}x)\quad\mbox{ in  }(\mathbb{R}^n\setminus\{0\})\times (0,\infty)
\end{equation} 
for some constants $\alpha<0$, $\beta<0$, which have initial values of the form $\eta |x|^{-\gamma}$ in $\mathbb{R}^n\setminus\{0\}$ for some constants $\eta>0$ and 
\begin{equation}\label{eq:gamma range}
\frac{2}{1-m}<\gamma<\frac{n-2}{m}. 
\end{equation}
Then $V(x,t)$ is a solution of 
\begin{equation}\label{eq-fde-global-except-0} 
\left\{\begin{aligned}
u_t=&\Delta (u^m/m),\quad u>0,\quad\mbox{in }(\R^n\setminus\{0\})\times(0,\infty)\\
u(\cdot,0)=&\eta |x|^{-\gamma}\qquad\qquad\qquad\mbox{ in }\R^n\setminus\{0\}\end{aligned}\right.
\end{equation}  
if and only if $f$ is a radially symmetric solution of
\begin{equation}\label{eq:one point singular:25}
  \Delta (f^m/m) + \alpha f + \beta x \cdot \nabla f =0,\quad f>0, \quad \text{in} \;  \mathbb{R}^n \setminus   \left\{ 0 \right\}
\end{equation}
satisfying
\begin{equation}\label{f-blow-up-rate-origin}
 \lim_{|x| \to 0 } |x|^{\gamma} f(x) = \eta
\end{equation}
with
\begin{equation}\label{eq:alpha beta relation}
\alpha=\frac{2\beta-1}{1-m}
\end{equation} 
and
\begin{equation}\label{eq:gamma slpha beta relation6}
\gamma=\alpha/\beta.
\end{equation}
Note that by the result of \cite{Hsu25non} if there exists a radially symmetric solution $f$ of \eqref{eq:one point singular:25}, \eqref{f-blow-up-rate-origin}, for some constants $\alpha\in\mathbb{R}$,  $\beta\ne 0$, $\gamma>0$ and $\eta>0$, satisfying \eqref{eq:alpha beta relation}, then \eqref{eq:gamma slpha beta relation6}
holds and $\gamma>\frac{2}{1-m}$.
By \eqref{eq:alpha beta relation} and \eqref{eq:gamma slpha beta relation6},
\begin{equation}\label{eq:alpha beta relation5}
\beta=\frac{1}{2-\gamma (1-m)}<0\quad\mbox{ and }\alpha=\frac{2\beta-1}{1-m}<0
\end{equation}
since $ \gamma >\frac{2}{1-m}$.
Then 
\begin{equation}\label{alpha'-beta'-defn}
\alpha':=-\alpha >0, \; \beta' :=-\beta>0.
\end{equation}
Note that when $\gamma$ satisfies \eqref{eq:gamma range},
\begin{equation}\label{eq:beta0}
\beta < \beta^{(m)}_0 := \frac{-m}{n-2-mn}.
\end{equation}

In \cite{HK17asympt} K.M.~Hui and Soojung Kim by using an inversion method together with a fixed point argument on the equation obtained by the inversion transform proved the existence and uniqueness of singular radially symmetric  solution of \eqref{eq:one point singular:25},  \eqref{eq:one point singular:27}, when \eqref{n-m-relation} holds and  $\alpha$, $\beta$, $\gamma$ satisfy \eqref{eq:gamma range} and \eqref{eq:alpha beta relation}, \eqref{eq:gamma slpha beta relation6}. However this method may not be applicable to find the other singular self-similar solutions of \eqref{fde}.

In this paper we will give a new direct proof of this existence and uniqueness result of the singular self-similar solutions of K.M.~Hui and Soojung Kim \cite{HK17asympt}  using fixed point method. Similar fixed point approach has been used by K.M.~Hui \cite{Hui23existe} to prove the
existence and uniqueness of backward singular radially symmetric solution of \eqref{fde} in 
$(\mathbb{R}^n\setminus\{0\})\times (0,T)$, $T>0$, which blows up on $\{0\}\times (0,T)$. In \cite{Hui23existe} K.M.~Hui also used this method to prove the existence and uniqueness of backward singular radially symmetric solution of the logarithmic diffusion equation in 
$(\mathbb{R}^n\setminus\{0\})\times (0,T)$ which blows up on $\{0\}\times (0,T)$.

In \cite{HK17asympt} K.M.~Hui and Soojung Kim also proved the asymptotic behaviour of the solution of
\begin{equation}\label{eq-fde2-global-except-0} 
\left\{\begin{aligned}
u_t=\Delta u^m\quad&\mbox{in }(\mathbb{R}^n\setminus\{0\})\times(0,\infty)\\
u(\cdot,0)=u_0(x)\quad&\mbox{in }\mathbb{R}^n\setminus\{0\}\end{aligned}\right.
\end{equation}
with the initial value $u_0$ satisfying the growth condition
\begin{equation}\label{eq-fde2-initial}
A_1|x|^{-\gamma}\leq u_0(x)\leq A_2|x|^{-\gamma}\quad\mbox{in $\mathbb{R}^n\setminus\{0\}$} 
\end{equation}
for some constants $A_2>A_1>0$ and $\frac{2}{1-m}< \gamma <n$. In this paper we will extend this asymptotic behaviour result to the case
\begin{equation}\label{gamma-range1}
n\le  \gamma < \frac{n-2}{m}.
\end{equation}
We will show that in this case under a mild addition assumption on the initial data some rescaled function of the solution of \eqref{eq-fde2-global-except-0}, \eqref{eq-fde2-initial}, will converge to some forward singular radially symmetric self-similar solution of the equation. 

Finally we will use a modification of the technique of K.M.~Hui and Sunghoon Kim \cite{HK19singul} to  prove the  asymptotic expansion of such forward singular radially symmetric solution $f$ of \eqref{eq:one point singular:25}, \eqref{eq:one point singular:27}, near the origin. 

More precisely we will prove the following main results. In particular we will give a new proof of the following theorem.

\begin{theorem}[Existence and uniqueness of singular self-similar solution, cf. Theorem 1.1 of \cite{HK17asympt}]\label{existence-self-similar-soln-thm}
Let $n \ge 3, \; 0<m<\frac{n-2}{n}, \;  \eta>0$ and
\begin{equation}\label{eq:alpha beta relation3}
\beta<0,\quad \rho_1>0,\quad \alpha=\frac{2\beta-\rho_1}{1-m}\quad\mbox{ and }\quad \frac{2}{1-m}<\frac{\alpha}{\beta}<\frac{n-2}{m}. 
\end{equation}
Then there exists a unique radially symmetric singular solution $f$  of \eqref{eq:one point singular:25} satisfying 
\begin{equation}\label{eq:one point singular:27}
 \lim_{|x| \to 0 } |x|^{\alpha/\beta} f(x) = \eta.
\end{equation} 
Moreover $f$ satisfies
\begin{equation}\label{f-infty-behaviour}
\lim_{|x| \to \infty}|x|^{\frac{n-2}{m}}f(x) = D(\eta)
\end{equation}
for some constant $D(\eta) > 0$.
\end{theorem}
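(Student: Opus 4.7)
My plan is to recast the boundary-value problem \eqref{eq:one point singular:25}--\eqref{eq:one point singular:27} as an integral fixed-point equation near the origin, solve it by the Banach contraction principle, then extend the solution to the whole of $(0,\infty)$ and read off the behaviour at infinity. Setting $\gamma:=\alpha/\beta$ and writing $f(r)$ for the radial profile, I introduce the modified unknown $g(r):=r^{\gamma}f(r)$, so that \eqref{eq:one point singular:27} becomes the clean condition $g(0^{+})=\eta$. The algebraic identity
\[
\alpha f+\beta r f_{r}=\beta r^{1-\gamma}\bigl(r^{\gamma}f\bigr)'=\beta r^{1-\gamma}g'(r)
\]
puts the radial form of \eqref{eq:one point singular:25} into the integrable shape
\[
\bigl(r^{n-1}(f^{m})_{r}\bigr)_{r}=-m\beta\,r^{n-\gamma}\,g'(r),
\]
and the assumption $\gamma<(n-2)/m$ ensures $r^{n-1}(f^{m})_{r}\to 0$ as $r\to 0^{+}$, because $f^{m}\sim\eta^{m}r^{-m\gamma}$ there with $n-m\gamma-2>0$. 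Integrating from $0$ to $r$ and substituting $f^{m}=r^{-m\gamma}g^{m}$ produces a first-order ODE for $g$, and a further quadrature yields an equivalent integral equation of the form $g=\mathcal{T}(g)$, where $\mathcal{T}$ is built from iterated integrals in $r$ of smooth expressions in $g$, $g^{m}$ and $g^{m-1}$ on the region $\{g>0\}$.

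I then show that $\mathcal{T}$ is a contraction on the complete metric space
\[
X_{R_{0}}:=\Bigl\{\,g\in C([0,R_{0}])\,:\,g(0)=\eta,\ \tfrac{\eta}{2}\le g(r)\le \tfrac{3\eta}{2}\,\Bigr\}
\]
equipped with the supremum norm. On $X_{R_{0}}$ all integrands that enter $\mathcal{T}$ are uniformly bounded and Lipschitz in $g$ (since $g$ is bounded away from $0$), so a routine estimate shows that for $R_{0}>0$ sufficiently small $\mathcal{T}:X_{R_{0}}\to X_{R_{0}}$ is a strict contraction; Banach's theorem produces a unique $g\in X_{R_{0}}$, and therefore a unique radial $f$ solving \eqref{eq:one point singular:25}--\eqref{eq:one point singular:27} on $(0,R_{0}]$. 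Because \eqref{eq:one point singular:25} is a regular ODE on $(0,\infty)$ so long as $f$ remains strictly positive and locally bounded, the local solution extends to a maximal interval $(0,R_{\max})$; comparison with self-similar super- and sub-solutions of the form $c_{1}r^{-\gamma}$ near the origin and $c_{2}r^{-(n-2)/m}$ for large $r$, together with the monotonicity of $s\mapsto s^{m}$, precludes both blow-up and vanishing in finite $r$, giving $R_{\max}=\infty$.

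To prove \eqref{f-infty-behaviour}, I introduce $\psi(r):=r^{(n-2)/m}f(r)$ and rewrite the ODE for $\psi$ as an Euler-type linear equation perturbed by a term that is integrable in a neighbourhood of infinity. The indicial polynomial of the leading-order Euler operator factors as $(\lambda-m\gamma)(\lambda-m\gamma+n-2)$, whose two modes correspond to $f\equiv\mathrm{const}$ and $f=r^{-(n-2)/m}$; since $\gamma<(n-2)/m$, the second mode is the one that can match the global barriers above. A Gronwall / second contraction argument applied to $(\psi,r\psi')$ then shows that $\psi(r)$ admits a finite limit as $r\to\infty$, which is defined to be $D(\eta)$; its strict positivity is inherited from the lower barrier $f(r)\ge c_{2}r^{-(n-2)/m}$ established in the previous step. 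Uniqueness of $f$ on $(0,\infty)$ is immediate from uniqueness of the Banach fixed point near $r=0$ combined with uniqueness of ODE continuation.

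The main obstacle, in my view, is the global extension together with the sharp far-field rate $r^{-(n-2)/m}$ and the strict positivity of $D(\eta)$. The local fixed point near $r=0$ is essentially bookkeeping once the correct unknown $g=r^{\gamma}f$ and the integrated form of \eqref{eq:one point singular:25} are in place. By contrast, constructing sub- and super-solutions that pin down $f\sim D r^{-(n-2)/m}$ at infinity is delicate, because the diffusion term $\Delta(f^{m}/m)$ is critical at this decay rate (the profile $r^{-(n-2)/m}$ makes $f^{m}$ harmonic to leading order), so the transport term $\alpha f+\beta x\cdot\nabla f$ must be used in a precise way to stabilise the decay and to guarantee $D(\eta)>0$.
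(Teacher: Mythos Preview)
Your plan differs substantially from the paper's, and as written it has a genuine gap already in the local step. The paper runs the contraction near $r=\infty$, not near $r=0$, and it does so not for $g=r^{\gamma}f$ but for the \emph{logarithmic derivative} $h(s)=r\,w_r/w+C_1$ in the variable $s=\log r$ (here $w=r^{\gamma}f$ and $C_1=\frac{n-2}{m}-\gamma$). The point of this substitution is that $h$ obeys a true first-order scalar ODE, equation~\eqref{eq:h_s}, whose coefficients decay exponentially as $s\to+\infty$; one integration then yields the honest integral operator \eqref{h-integral-representation}, and the contraction in Lemma~\ref{lemma:local existence of h} is routine. By contrast, your ``first-order ODE for $g$'' obtained after one integration of $\bigl(r^{n-1}(f^{m})_r\bigr)_r=-m\beta\,r^{\,n-\gamma}g'$ is in fact the integro-differential relation
\[
g'(r)=\frac{\gamma}{r}\,g(r)\;-\;\beta\,g(r)^{1-m}\,r^{\,m\gamma+1-n}\!\int_0^r s^{\,n-\gamma}g'(s)\,ds,
\]
which still contains $g'$ under the integral and carries a $1/r$-singular coefficient. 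Eliminating $g'$ by parts replaces the integral by $r^{n-\gamma}g(r)-(n-\gamma)\int_0^r s^{n-\gamma-1}g(s)\,ds$, and for $\gamma\ge n$ (which is allowed, since $\gamma$ ranges up to $\frac{n-2}{m}>n$) both pieces diverge at $s=0$. Thus $\mathcal{T}$ is not well defined on $C([0,R_0])$ across the full parameter range of the theorem, and the contraction step does not close as stated. The correct local analysis near the origin requires the compressed variable $\rho=r^{(1-m)\gamma-2}$ of Section~\ref{sec:asympt-expans-sing}, in which the slope $\overline w_\rho(0)$ is forced by the equation rather than free; that is a different and harder fixed point than the one you sketch.

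The paper also handles the global step and the behaviour at the ``other'' end more cleanly than your barrier proposal. Once local existence near $s=+\infty$ is in hand, Lemma~\ref{h-bded-lem} derives the two-sided a~priori bound $0<h(s)<\frac{n-2}{m}-\gamma$ directly from the equation; this bound alone forces the maximal interval of existence to be all of $\mathbb{R}$ (Theorem~\ref{theorem:global existence of h}), so no comparison barriers are needed. The identity $r^{(n-2)/m}f(r)=\eta_\infty\exp\bigl(-\int_{\log r}^{\infty}h\bigr)$ then yields both endpoint limits in Corollary~\ref{cor:f-existence-infty}, and the matching $\eta\leftrightarrow\eta_\infty=D(\eta)$ together with uniqueness is imported from \cite{HK17asympt}. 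Your Euler/indicial heuristic for the far field is also not quite right: the equation is nonlinear in $f$, so the distinguished exponents $\gamma$ and $\frac{n-2}{m}$ are not roots of a single indicial polynomial but zeros of the drift term $\alpha f+\beta r f_r$ and of $\Delta(f^{m}/m)$ separately; a Gronwall argument on $(\psi,r\psi')$ does not close without an independent lower barrier of exactly the type you leave unspecified.
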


The following theorem extends the result of K.M.~Hui and Sunghoon Kim \cite{HK19singul} on the asymptotic expansion of singular backward radially symmetric self-similar solution of \eqref{fde} near the origin to the case of singular forward radially symmetric self-similar solution of \eqref{fde} near the origin.

\begin{theorem}[Asymptotic expansion of singular radially symmetric self-similar solution near the origin]\label{asymptotic expansion of self-similar solution at the origin thm}
Let $n \ge 3$, $0<m<\frac{n-2}{n}$, $\eta>0$ and let $\alpha$, $\beta$, $\rho_1$, $\alpha'$, 
$\beta'$ satisfy \eqref{alpha'-beta'-defn} and \eqref{eq:alpha beta relation3}. Let $f$ be a radially symmetric solution of \Cref{eq:one point singular:25}  which satisfies \Cref{eq:one point singular:27}. Let 
\begin{equation}\label{w-defn}
w(r) = r^{\frac{\alpha}{\beta} }f(r),
\end{equation}
 $\rho=r^{\rho_1/\beta'}$ and $\overline{w}(\rho) =w(r)$. Then $\overline{w}$ can be extended to a function in $C^2 ([0,\infty) )  $ by setting
\begin{equation}\label{eq:one point singular:28} 
\left\{\begin{aligned}
 & \overline{w}(0)=\eta\\
 & \overline{w}_{\rho}(0)=\frac{a_3}{a_2}\eta^m\\
   & \overline{w}_{\rho \rho}(0) = \frac{a_3(ma_3-a_1)}{a_2^2} \eta^{2m-1} 
\end{aligned}\right.
\end{equation}
where
\begin{equation}\label{eq:one point singular:30}
 a_1 = \frac{2m\alpha-(n-2) \beta+\rho_1}{ \rho_1}  ,\; a_2= -\frac{\beta^2}{ \rho_1}\quad{ and } \; a_3 = \frac{ \alpha \beta(n-2) - m \alpha^2}{\rho_1^{2}} .
\end{equation}
Hence 
\begin{align*}
\left\{\begin{aligned}
&\ol{w}(\rho)=\eta+\frac{a_3}{a_2}\eta^m\rho+\frac{a_3(ma_3-a_1)}{2a_2^2} \eta^{2m-1}\rho^2+o(\rho^2)\quad\mbox{ as }\rho\to 0^+\\
&\ol{w}_{\rho}(\rho)=\frac{a_3}{a_2}\eta^m\rho+\frac{a_3(ma_3-a_1)}{a_2^2} \eta^{2m-1}\rho^2+o(\rho)\quad\mbox{ as }\rho\to 0^+
\end{aligned}\right.
\end{align*}
or equivalently

\begin{align*}
\left\{\begin{aligned}
&f(r)=r^{-\alpha/\beta}\left\{\eta+\frac{a_3}{a_2}\eta^m r^{\rho_1/\beta'}+\frac{a_3(ma_3-a_1)}{2a_2^2} \eta^{2m-1}r^{2\rho_1/\beta'}+o(r^{2\rho_1/\beta'})\right\}\quad\mbox{ as }r\to 0^+\\
&f_r(r)=r^{-\frac{\alpha}{\beta}-1}\left\{-\frac{\alpha}{\beta}\eta-\frac{(2\beta-m\rho_1)a_3}{(1-m)a_2\beta}\eta^mr^{\rho_1/\beta'}+o(r^{\rho_1/\beta'})\right\}\quad\mbox{ as }r\to 0^+.
\end{aligned}\right.
\end{align*}
\end{theorem}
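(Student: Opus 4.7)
The plan is to derive an ODE for $\overline{w}(\rho)$ which, although the original equation is singular at $r=0$, takes a form sufficiently regular at $\rho=0$ that the Taylor coefficients $\overline{w}(0),\overline{w}_\rho(0),\overline{w}_{\rho\rho}(0)$ can be read off directly. Since $f$ is radial, \Cref{eq:one point singular:25} reduces to the ODE
\[
\frac{1}{m}(f^m)_{rr}+\frac{n-1}{mr}(f^m)_r+\alpha f+\beta r f_r=0.
\]
The substitution $f(r)=r^{-\alpha/\beta}w(r)$ together with $\alpha=\gamma\beta$, where $\gamma=\alpha/\beta$, collapses the drift to $\alpha f+\beta r f_r=\beta r^{1-\gamma}w_r$, and after a routine expansion one finds
\[
w^{m-1}w_{rr}+(m-1)w^{m-2}w_r^2+\frac{\gamma(m\gamma-n+2)}{r^2}w^m+\frac{n-1-2m\gamma}{r}w^{m-1}w_r+\beta r^{(m-1)\gamma+1}w_r=0.
\]
From \Cref{eq:alpha beta relation3} one checks $(m-1)\gamma+1=\rho_1/\beta-1=-\rho_1/\beta'-1$, so introducing $\rho=r^{\rho_1/\beta'}$ and using the chain-rule identities $rw_r=(\rho_1/\beta')\rho\,\overline{w}_\rho$ and $r^2w_{rr}=(\rho_1/\beta')(\rho_1/\beta'-1)\rho\,\overline{w}_\rho+(\rho_1/\beta')^2\rho^2\,\overline{w}_{\rho\rho}$, every term in the displayed equation carries a common factor $r^{-2}$ after substitution; cancelling it leaves an equation of the form
\[
\rho_1\,\overline{w}_\rho=\gamma(m\gamma-n+2)\,\overline{w}^m+\rho\,G(\rho,\overline{w},\overline{w}_\rho,\overline{w}_{\rho\rho}),\qquad\rho>0,
\]
where $G$ is explicit, polynomial in $\overline{w}_\rho$ and $\overline{w}_{\rho\rho}$, and smooth on $\{\overline{w}>0\}$.

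The core analytic step, adapted from the technique of K.M.~Hui and Sunghoon Kim \cite{HK19singul} for the backward case, is to upgrade the crude limit $\overline{w}(\rho)\to\eta$ from \Cref{eq:one point singular:27} to $C^2$-regularity on $[0,\rho_0]$ for some $\rho_0>0$. Setting $\overline{w}(0):=\eta$, one recasts the displayed equation as a fixed-point / integral equation for $\overline{w}$ on $(0,\rho_0)$ with $\overline{w}(0)=\eta$ and runs a contraction or monotonicity argument as in \cite{HK19singul} to show the solution has $\overline{w}_\rho$ continuous up to $\rho=0$. Passing to the limit $\rho\to 0^+$ in the rearranged ODE then gives
\[
\overline{w}_\rho(0)=\frac{\gamma(m\gamma-n+2)}{\rho_1}\,\eta^m=\frac{a_3}{a_2}\,\eta^m,
\]
the second equality being a direct algebraic check from \Cref{eq:one point singular:30}. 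Continuity of $\overline{w}_{\rho\rho}$ at $0$ then follows from the ODE, and differentiating it once in $\rho$ (noting that the $\overline{w}_{\rho\rho}$-term in $G$ carries an extra factor of $\rho$) and sending $\rho\to 0^+$ yields
\[
\rho_1\,\overline{w}_{\rho\rho}(0)=m\gamma(m\gamma-n+2)\,\eta^{m-1}\overline{w}_\rho(0)+\eta^{m-1}\!\left[(n-1-2m\gamma)\tfrac{\rho_1}{\beta'}+\tfrac{\rho_1}{\beta'}\!\left(\tfrac{\rho_1}{\beta'}-1\right)\right]\!\overline{w}_\rho(0).
\]
The algebraic identity $n-2-2m\gamma+\rho_1/\beta'=a_1\rho_1/\beta'$ (again immediate from \Cref{eq:one point singular:30} together with $\beta'=-\beta$) then collapses the right-hand side to $a_3(ma_3-a_1)\eta^{2m-1}/a_2^2$.

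With $\overline{w}\in C^2([0,\infty))$ and these three values at $\rho=0$, the expansions of $\overline{w}$ and $\overline{w}_\rho$ follow from Taylor's theorem, and that of $f$ from $f(r)=r^{-\alpha/\beta}\overline{w}(\rho)$. For $f_r$ one differentiates this identity to obtain
\[
f_r(r)=r^{-\alpha/\beta-1}\!\left[-\tfrac{\alpha}{\beta}\overline{w}(\rho)+\tfrac{\rho_1}{\beta'}\rho\,\overline{w}_\rho(\rho)\right],
\]
substitutes the first-order expansions of $\overline{w}$ and $\overline{w}_\rho$, and uses the identity $-\tfrac{\alpha}{\beta}+\tfrac{\rho_1}{\beta'}=-\tfrac{2\beta-m\rho_1}{(1-m)\beta}$ (from $\alpha(1-m)=2\beta-\rho_1$) to reach the stated asymptotic. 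The main obstacle is the regularity step in the second paragraph: a priori $\overline{w}_\rho$ could oscillate as $\rho\to 0^+$, and ruling this out requires a genuine ODE-theoretic argument of the type developed in \cite{HK19singul}. Everything else reduces to the algebraic manipulations indicated above.
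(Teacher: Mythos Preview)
Your overall architecture is correct and matches the paper's: derive an ODE for $\overline{w}$ in the $\rho$-variable, establish $C^2$ regularity at $\rho=0$, and read off the Taylor coefficients. The algebraic identifications you write down are fine. The gap is in the paragraph you flag as ``the main obstacle,'' where your description of the method is not what \cite{HK19singul} actually does and would not close the argument as written.

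Concretely: the technique in \cite{HK19singul} (and in this paper's Section~3) is \emph{not} a fixed-point or contraction argument on an integral equation for $\overline{w}$. The paper works with the ODE in the form
\[
\left(\frac{\overline{w}_\rho}{\overline{w}}\right)_\rho+m\left(\frac{\overline{w}_\rho}{\overline{w}}\right)^2+\frac{a_1}{\rho}\cdot\frac{\overline{w}_\rho}{\overline{w}}+\frac{a_2}{\rho^2}\cdot\frac{\overline{w}_\rho}{\overline{w}^m}=\frac{a_3}{\rho^2},
\]
and proceeds in three genuinely distinct steps: (i) a sign lemma (Lemma~\ref{lemma:one point singular:1}) showing $\overline{w}_\rho<0$ for all $\rho>0$, proved by a maximum-principle/blow-up contradiction using the integrated form of the equation; (ii) the substitution $q(\rho)=1/\overline{w}_\rho(\rho)$, which satisfies its own first-order ODE, followed by two-sided barrier arguments trapping $q$ in an interval around $(a_2/a_3)\eta^{-m}$ and an $\text{L'H\^opital}$-type passage to the limit (Lemma~\ref{lm:lemma 3.2}); (iii) a parallel argument for $\overline{w}_{\rho\rho}$ (Lemma~\ref{w-rho-2nd-derivative-limit-at-origin-lem}). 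Step~(i) is essential because $q$ is only defined once $\overline{w}_\rho$ has a sign, and you do not mention it. The paper also stresses that $a_2<0$ here while $a_2>0$ in \cite{HK19singul}, and this sign flip forces nontrivial changes in the barrier arguments; a straight citation of \cite{HK19singul} does not suffice.

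Your proposed route to $\overline{w}_{\rho\rho}(0)$ --- differentiate the rearranged equation $\rho_1\overline{w}_\rho=\gamma(m\gamma-n+2)\overline{w}^m+\rho G$ and send $\rho\to 0$ --- is circular: $G$ contains $\rho\,\overline{w}_{\rho\rho}$, so after differentiation you need $\rho\,\overline{w}_{\rho\rho\rho}$ and $\overline{w}_{\rho\rho}$ to stay bounded as $\rho\to 0$, which is exactly what you are trying to prove. Equivalently, solving the original second-order ODE for $\overline{w}_{\rho\rho}$ leaves a factor $1/\rho^2$ in front of a bracket that is only $O(\rho)$ from $C^1$ information, giving $\overline{w}_{\rho\rho}=O(1/\rho)$ at best. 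The paper avoids this by running a separate barrier/limit argument for the second derivative rather than by formal differentiation.
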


\begin{remark}
For any $\lambda>0$, we let $f_{\lambda}$ be the solution of \eqref{eq:one point singular:25}, \eqref{eq:one point singular:27}, with $\eta=\lambda^{\frac{1}{ (1-m) \beta}}$  and $\alpha$, $\beta$,  $\gamma$, satisfy \eqref{eq:gamma range} and \eqref{eq:alpha beta relation}, \eqref{eq:gamma slpha beta relation6}, or equivalently \eqref{eq:alpha beta relation5}  given by Theorem \ref{existence-self-similar-soln-thm}. Then
\begin{equation}\label{eq:V lambda}
V_{\lambda}: = t^{-\alpha} f_{\lambda}(t^{-\beta}x)
\end{equation}
is a singular radially symmetric solution of \eqref{eq-fde-global-except-0} with  $\eta= \lambda^{\frac{1}{ (1-m) \beta}}$ which blows up on $\{0\}\times (0,\infty)$. 
Note that by the result of \cite{HK17asympt},
\begin{equation*}
f_{\lambda}(x)=\lambda^{\frac{2}{1-m}}f_1(\lambda x)\quad\forall x\in\mathbb{R}^n\setminus\{0\}
\end{equation*}
and by Remark 2 of \cite{HK17asympt},
\begin{equation*}
\frac{d}{d\lambda}f_{\lambda}(r)<0\quad\forall r>0\quad\Rightarrow\quad f_{\lambda_1}(r)<f_{\lambda_2}(r)\quad\forall\lambda_1>\lambda_2>0, r>0.
\end{equation*}
Moreover by Remark 2 of \cite{HK17asympt} for any $\lambda_1>\lambda_2>0$, there exists a constant $c_0>0$ such that
\begin{equation*}
c_0f_{\lambda_2}(r)<f_{\lambda_1}(r)\quad\forall r>0.
\end{equation*}
\end{remark}

Before stating the convergence theorem we first construct proper weight function used in the theorem.
Let  $0<\mu<n-2$. We will construct a non-negative radially symmetric weight function $\varphi_{\mu}\in C^{\infty} (\mathbb{R}^n)$ as follows. Let $\eta_1$ be a smooth non-negative radially symmetric function on $\mathbb{R}^n$ that satisfies
\begin{equation*}
\eta_1(r) = \begin{cases}
0\quad & \text{if }0\le r=|x| \le 1  \\
\mu((n-2)-\mu) r^{-\mu-2} \quad & \text{if } r \ge 2.
\end{cases}
\end{equation*}
Let $\varphi_1$ be a radially symmetric function given by
\begin{equation} \label{eq:varphi}
\varphi_{\mu}(x)= \varphi_{\mu}(|x|):= 1 -a_4\int_0^{|x|}\frac{1}{s^{n-1}} \left(  \int_0^s \rho^{n-1} \eta_1(\rho) \,\mathrm{d} \rho \right) \,\mathrm{d} s\quad\forall x\in\mathbb{R}^n
\end{equation}
where 
\begin{equation}\label{a4-defn} 
a_4=\left(\int_0^{\infty}\frac{1}{s^{n-1}} \left(  \int_0^s \rho^{n-1} \eta_1(\rho) \,\mathrm{d} \rho \right) \,\mathrm{d} s\right)^{-1}.
\end{equation}  
We will now show that $a_4$ is well-defined. Note that
\begin{align}\label{eta1-integral-value}
\int_0^s \rho^{n-1} \eta_1(\rho) \,\mathrm{d} \rho 
=&\int_1^2 \rho^{n-1} \eta_1(\rho) \,\mathrm{d} \rho + \mu(n-2-\mu) \int_2^s \rho^{n-3-\mu} \,\mathrm{d} \rho \quad\forall s>2\notag\\
=&a_5+ \mu(2^{n-2-\mu} -s^{n-2-\mu} )\quad\forall s>2
\end{align}
where 
\begin{equation*}
a_5=\int_1^2 \rho^{n-1} \eta_1(\rho) \,\mathrm{d} \rho\ge 0. 
\end{equation*}
Hence by \eqref{eta1-integral-value},
\begin{align}\label{integral-estimate1}
\int_0^{\infty}\frac{1}{s^{n-1}} \left(  \int_0^s \rho^{n-1} \eta_1(\rho) \,\mathrm{d} \rho \right) \,\mathrm{d} s
=&\int_1^2 s^{1-n} \left(  \int_1^s \rho^{n-1} \eta_1(\rho) \,\mathrm{d} \rho \right) \,\mathrm{d} s + k_0
\end{align}
where
 \begin{align}\label{k0-defn}
k_0&= \int_2^{\infty}s^{1-n} \left[a_5 +\mu(s^{n-\mu-2} - 2^{n-\mu-2})\right] \,\mathrm{d} s\notag \\
 & =  2^{2-n} \left(  \frac{a_5-\mu 2^{n-\mu-2} }{n-2} \right) + 2^{-\mu}=\frac{2^{2-n}a_5+(n-2-\mu)2^{-\mu}}{n-2}>0.
\end{align}
By \eqref{a4-defn}, \eqref{integral-estimate1} and \eqref{k0-defn}, we get $a_4\in (0,\infty)$ and thus $a_4$ is well-defined. Then by \eqref{eq:varphi},
\begin{equation}\label{laplace-phi-negative}
\Delta \varphi_{\mu}(x)= -a_4\eta_1(x)\le 0\quad\forall x\in\mathbb{R}^n. 
\end{equation}
By \eqref{eq:varphi}  and \eqref{a4-defn}, we get 
\begin{equation}\label{phi-bd}
0<\varphi_{\mu}(r)\le 1\quad\forall r\ge 0.
\end{equation}
Moreover $\varphi_{\mu}(r)$ is a monotone decreasing function of $r\ge 0$ and $\varphi_{\mu}(r)\to 0$ as $r\to\infty$. Note also that by \eqref{eq:varphi} and \eqref{a4-defn}, 
\begin{align}\label{phi-asymptotic behaviour}
\varphi_{\mu}(r)=&1-a_4\left(\int_1^{\infty}s^{1-n}\left(\int_1^s\rho^{n-1} \eta_1(\rho) \,\mathrm{d} \rho \right) \,\mathrm{d} s\right) + a_4\left(\int_r^{\infty}s^{1-n}\left(\int_1^s\rho^{n-1} \eta_1(\rho) \,\mathrm{d} \rho \right) \,\mathrm{d} s\right)\notag\\
=&a_4\left(\int_r^{\infty}s^{1-n}\left(\int_1^s\rho^{n-1} \eta_1(\rho) \,\mathrm{d} \rho \right) \,\mathrm{d} s\right)\notag\\
=&a_4\int_r^{\infty}s^{1-n} \left[a_5 +\mu(s^{n-\mu-2} - 2^{n-\mu-2})\right] \,\mathrm{d} s \notag\\
=&a_4r^{2-n} \left(  \frac{a_5-\mu 2^{n-\mu-2} }{n-2} \right) + a_4r^{-\mu}\quad \forall r >2\notag\\
\to& a_4r^{-\mu}\quad\mbox{ as }r\to\infty.
\end{align}
Hence
\begin{equation}\label{phi'-negative}
\varphi_{\mu}'(r)<0\quad\forall r>0
\end{equation}
and
\begin{align}\label{phi-derivative-asymptotic behaviour}
\varphi_{\mu}'(r)
=&-(n-2)a_4r^{1-n} \left(  \frac{a_5-\mu 2^{n-\mu-2} }{n-2} \right)-\mu a_4r^{-\mu-1}\quad \forall r >2\notag\\
\to&-\mu a_4r^{-\mu-1}\quad\mbox{ as }r\to\infty.
\end{align}
By \eqref{phi-asymptotic behaviour} and \eqref{phi-derivative-asymptotic behaviour} there exists $R_0>2$ such that
\begin{equation}\label{phi-asymptotic behaviour2}
\frac{a_4}{2}r^{-\mu}<\varphi_{\mu}(r)<2a_4r^{-\mu}\quad\forall r\ge R_0
\end{equation}
and
\begin{equation}\label{phi-derivative-asymptotic behaviour2}
-2\mu a_4r^{-\mu-1}<\varphi_{\mu}'(r)<-\frac{\mu a_4}{2}r^{-\mu-1}\quad\forall r\ge R_0.
\end{equation}
Let
\begin{equation*}
L^1(\varphi_{\mu} ; \mathbb{R}^n\setminus\{0\} )=\{v\in L^1_{loc}(\mathbb{R}^n\setminus\{0\}):\int_{\mathbb{R}^n\setminus\{0\}}|v|\varphi_{\mu}\,dx<\infty\}.
\end{equation*}

\begin{theorem}[Weighted $L^1$-contraction]\label{weighted contraction theorem}
  Let $n\ge 3$, $0< m < \frac{n-2}{n}$, $\lambda_1 > \lambda_2 > 0$, $0<\mu<n-2$, $\beta < 0 , \;  \alpha=\frac{2\beta-1}{1-m}$ and let $\gamma$ satisfy \eqref{eq:gamma slpha beta relation6} and \(  \frac{2}{1-m} < \gamma < \frac{n-2}{m}  \). Let $u$, $v$, be solutions of \eqref{eq-fde2-global-except-0} with initial values $u_0, v_0$, respectively which satisfy 
\begin{equation}\label{solns-lower-upper-bd}
 V_{\lambda_1} \le u,v \le V_{\lambda_2}\quad\forall (x,t)\in(\mathbb{R}^n \setminus \{ 0 \} ) \times (0,\infty)
\end{equation} 
where $V_{\lambda_i}$, $i=1,2$ are given by \Cref{eq:V lambda} with $\lambda=\lambda_1,\lambda_2$.
Suppose that $|u_0-v_0| \in L^1( \varphi_{\mu} ; \mathbb{R}^n\setminus\{0\})     $ for some function $\varphi_{\mu}$ given by \Cref{eq:varphi}. Then
      \begin{align}\label{eq:one point singular:52} 
      \int_{\mathbb{R}^n}|u-v|(x,t) \varphi_{\mu}(x) \,\mathrm{d} x \le \int_{\mathbb{R}^n} |u_0-v_0|(x) \varphi_{\mu}(x) \,\mathrm{d} x \quad \forall t>0
      \end{align}
      and
      \begin{align}\label{eq:one point singular:54} 
       \int_{\mathbb{R}^n}(u-v)_+ (x,t) \varphi_{\mu}(x) \,\mathrm{d} x \le \int_{\mathbb{R}^n } (u_0-v_0)_+(x)  \varphi_{\mu}(x) \,\mathrm{d} x \quad \forall t>0.        \end{align}
      
   \end{theorem}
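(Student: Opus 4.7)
The plan is to reduce \eqref{eq:one point singular:52} and \eqref{eq:one point singular:54} to a Kato-type inequality for the difference $w:=u-v$ and then to transfer the Laplacian onto the weight, exploiting $\Delta\varphi_{\mu}\le 0$ from \eqref{laplace-phi-negative}. Setting $W:=(u^m-v^m)/m$, the difference satisfies $w_t=\Delta W$ in $(\mathbb{R}^n\setminus\{0\})\times(0,\infty)$, and since $s\mapsto s^m/m$ is strictly increasing on $(0,\infty)$, $W$ has the same sign as $w$ pointwise. Approximating $|\cdot|$ by convex $C^{\infty}$ functions $J_\epsilon$ with $J_\epsilon'\to\mathrm{sgn}$ and $J_\epsilon''\ge 0$, multiplying the equation by $J_\epsilon'(w)$ and using the sign coincidence, I obtain in the limit $\epsilon\to 0^+$ the distributional Kato inequality $|w|_t\le\Delta|W|$ inside $(\mathbb{R}^n\setminus\{0\})\times(0,\infty)$. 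Using instead a smooth convex approximation of $s\mapsto s_+$ gives $(w_+)_t\le\Delta W_+$.

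Next I would introduce a radial cutoff $\zeta_{\delta,R}\in C_c^\infty(\mathbb{R}^n\setminus\{0\})$ equal to $1$ on $\{2\delta\le|x|\le R/2\}$, supported in $\{\delta\le|x|\le R\}$, with the standard first and second derivative bounds $O(\delta^{-1}),O(\delta^{-2})$ near $|x|=\delta$ and $O(R^{-1}),O(R^{-2})$ near $|x|=R$. Testing the Kato inequality against $\varphi_{\mu}\zeta_{\delta,R}\ge 0$ and integrating by parts twice in space yields
\begin{equation*}
\frac{d}{dt}\int_{\mathbb{R}^n}|w|\,\varphi_{\mu}\zeta_{\delta,R}\,\mathrm{d} x \;\le\; \int_{\mathbb{R}^n}|W|\bigl(\zeta_{\delta,R}\Delta\varphi_{\mu}+2\nabla\varphi_{\mu}\cdot\nabla\zeta_{\delta,R}+\varphi_{\mu}\Delta\zeta_{\delta,R}\bigr)\mathrm{d} x.
\end{equation*}
The sandwich \eqref{solns-lower-upper-bd} and \eqref{eq:V lambda}, together with the behaviour of $f_{\lambda_2}$ recorded in \eqref{eq:one point singular:27} and \eqref{f-infty-behaviour}, provide the uniform-in-$t$ pointwise bounds $|W|\lesssim |x|^{-\gamma m}$ near $0$ and $|W|\lesssim|x|^{-(n-2)}$ near infinity. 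Using $\gamma m<n-2$ together with the explicit decay of $\varphi_{\mu}$ and $\varphi_{\mu}'$ from \eqref{phi-asymptotic behaviour2}--\eqref{phi-derivative-asymptotic behaviour2}, the two cross/cutoff-Laplacian contributions over $B_{2\delta}\setminus B_\delta$ and $B_R\setminus B_{R/2}$ both vanish in the limit $\delta\to 0^+$, $R\to\infty$. On the left I pass to the limit by monotone/dominated convergence using the hypothesis $|u_0-v_0|\in L^1(\varphi_{\mu};\mathbb{R}^n\setminus\{0\})$, and the surviving term $\int|W|\zeta_{\delta,R}\Delta\varphi_{\mu}\,\mathrm{d} x\le 0$ by \eqref{laplace-phi-negative} then gives $\tfrac{d}{dt}\int|w|\varphi_{\mu}\,\mathrm{d} x\le 0$, which integrated in $t$ is \eqref{eq:one point singular:52}. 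Starting from $(w_+)_t\le\Delta W_+$ instead produces \eqref{eq:one point singular:54} by the same argument.

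The main obstacle is the precise control of the annular boundary terms $\int|W|(\nabla\varphi_{\mu}\cdot\nabla\zeta_{\delta,R}+\varphi_{\mu}\Delta\zeta_{\delta,R})\,\mathrm{d} x$ as $\delta\to 0^+$ and $R\to\infty$. At the inner boundary one must exploit the improved integrability $|W|\lesssim|x|^{-\gamma m}$ with $\gamma m<n-2$ coming from the nonlinearity (even though $u,v$ themselves may be as large as $|x|^{-\gamma}$ with $\gamma\ge n$, so that $|u-v|$ is only integrable against the weight $\varphi_{\mu}$, not against $1$); at the outer boundary one needs the specific polynomial decay $|x|^{-\mu}$ of $\varphi_{\mu}$ dictated by \eqref{phi-asymptotic behaviour2}, combined with the matching decay of $|W|$ inherited from the asymptotics of $V_{\lambda_2}$ at infinity.
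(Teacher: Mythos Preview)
Your overall strategy---Kato inequality for $|w|$ plus two cutoffs, then throw the Laplacian onto $\varphi_{\mu}$ and use \eqref{laplace-phi-negative}---is exactly the paper's approach; the paper just outsources the inner cutoff step by quoting \cite{TY22infini} to obtain the time-integrated inequality
\[
\int_{\mathbb{R}^n\setminus\{0\}}|u-v|(x,t)\,\psi\,\mathrm{d}x-\int_{\mathbb{R}^n\setminus\{0\}}|u_0-v_0|\,\psi\,\mathrm{d}x
\le\int_0^t\!\!\int_{\mathbb{R}^n\setminus\{0\}}|u^m-v^m|\,\Delta\psi\,\mathrm{d}x\,\mathrm{d}s
\]
already for $\psi\in C_c^{\infty}(\mathbb{R}^n)$, and then only cuts off at infinity. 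Your inner-cutoff estimate using $|W|\lesssim|x|^{-\gamma m}$ with $\gamma m<n-2$ is correct and is, in effect, a direct proof of that borrowed step.

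There is one genuine error in the outer part. The bound $|W|\lesssim|x|^{-(n-2)}$ is \emph{not} uniform in $t$: from $f_{\lambda_2}(r)\le D\,r^{-(n-2)/m}$ one gets
\[
V_{\lambda_2}(x,t)^m\le D^m\,t^{-m\alpha+\beta(n-2)}|x|^{-(n-2)},
\]
and the exponent $-m\alpha+\beta(n-2)=\bigl[\beta(n-2-mn)+m\bigr]/(1-m)$ is strictly negative by \eqref{eq:beta0}, so the constant blows up as $t\to 0^+$. This breaks your plan of first proving $\tfrac{d}{dt}\int|w|\varphi_{\mu}\,\mathrm{d}x\le 0$ pointwise and then ``integrating in $t$'': the integration from $0$ requires $\limsup_{s\to0^+}\int|w|(s)\varphi_{\mu}\,\mathrm{d}x\le\int|u_0-v_0|\varphi_{\mu}\,\mathrm{d}x$, which you cannot get from $L^1_{\mathrm{loc}}$ initial convergence alone (Fatou goes the wrong way). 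The fix is to integrate the cutoff inequality over $(0,t)$ \emph{before} sending $R\to\infty$; this is legitimate because $\varphi_{\mu}\zeta_{\delta,R}\in C_c^{\infty}(\mathbb{R}^n\setminus\{0\})$. But then the outer annulus term becomes $R^{-2-\mu}\int_0^t\int_{R\le|x|\le 2R}V_{\lambda_2}^m\,\mathrm{d}x\,\mathrm{d}s$, and neither of your single-regime bounds suffices: the near-origin profile gives a uniform $|x|^{-\gamma m}$ bound but leaves a factor $R^{n-2-\gamma m-\mu}$ that need not vanish for small $\mu$, while the near-infinity profile gives a time integral $\int_0^t s^{-m\alpha+\beta(n-2)}\,\mathrm{d}s$ that diverges at $0$. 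The paper resolves this by splitting the $s$-integral at $s=R^{1/\beta}$ and using the two profiles of $f_{\lambda_2}$ in the two regimes (this is the estimate \eqref{eq:one point singular:59}), which produces $C_t(R^{n-\gamma-\mu}+R^{-\mu}\log R+R^{-\mu})\to 0$. You will need the same splitting.
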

   
\begin{theorem}[Asymptotic large time behaviour of solutions]\label{asymptotic large time behaviour of solution thm}
  Let $n\ge 3$,   $0< m < \frac{n-2}{n}$,  $\beta < 0 , \;  \alpha=\frac{2\beta-1}{1-m}$ and let $\gamma$ satisfy \eqref{eq:gamma slpha beta relation6} and \eqref{gamma-range1}.  Let $u_0$ satisfy \eqref{eq-fde2-initial} and 
  \begin{equation}\label{u0-L1-bd5}
  u_0-A_0|x|^{-\gamma} \in L^1(\varphi_{\mu} ; \mathbb{R}^n )
  \end{equation}
for some constants $A_2\ge A_0\ge A_1>0$ and $0<\mu<n-2$. Let $u$ be the solution of \Cref{eq-fde2-global-except-0}  which satisfies 
\begin{equation}\label{eq:one point singular:62}
V_{\lambda_1} \le u \le V_{\lambda_2} \quad \text{ in } (\mathbb{R}^n  \setminus  \{    0     \}) \times (0,\infty),
\end{equation} 
where $\lambda_i=A_i^{(1-m)\beta}$ for $i=1,2$, and $V_{\lambda_1}$, $V_{\lambda_2}$ are given by  \eqref{eq:V lambda} with $\lambda=\lambda_1, \lambda_2$. Let $\widetilde{u}(y,\tau)$ be given by 
\begin{equation}\label{eq:u tilde}
\widetilde{u}(y,\tau) := t^{\alpha} u (t^{\beta} y , t ) , \quad \tau:= \log t.
\end{equation} 
Then as $\tau \to \infty$, $\widetilde{u}(y,\tau)$ will converge uniformly on each compact subset of $\mathbb{R}^n  \setminus  \{    0     \}$ and in $L^1(\varphi_1; \mathbb{R}^n )$ to $f_{\lambda_0}(y)$ where $\lambda_0=A_0^{(1-m)\beta}$.

\end{theorem}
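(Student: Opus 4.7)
The proof proceeds via self-similar rescaling combined with a weighted $L^1$-monotonicity argument and parabolic compactness. The rescaled function $\widetilde u$ of \Cref{eq:u tilde} solves the autonomous equation
\[\widetilde u_\tau = \Delta(\widetilde u^m/m) + \alpha \widetilde u + \beta y\cdot\nabla \widetilde u\quad\text{on}\ (\mathbb{R}^n\setminus\{0\})\times\mathbb{R},\]
whose stationary solutions are the profiles $f_\lambda$ of \Cref{existence-self-similar-soln-thm}, and the hypothesis \Cref{eq:one point singular:62} becomes $f_{\lambda_1}\le\widetilde u\le f_{\lambda_2}$, with $\lambda_1>\lambda_0>\lambda_2$ because $A\mapsto A^{(1-m)\beta}$ is decreasing. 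Since $\widetilde u$ is trapped between two smooth, strictly positive stationary profiles on every compact subset of $\mathbb{R}^n\setminus\{0\}$, interior parabolic regularity gives uniform $C^{2,1}_{loc}$ bounds independent of $\tau$; Arzel\`a--Ascoli plus a diagonal extraction then yield, for any sequence $\tau_k\to\infty$, a subsequence along which $\widetilde u(\cdot,\tau_k+\sigma)\to g(\cdot,\sigma)$ in $C^{2,1}_{loc}((\mathbb{R}^n\setminus\{0\})\times\mathbb{R})$, with $g$ a sandwiched solution of the rescaled equation.

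\textbf{Weighted monotonicity.} Applying \Cref{weighted contraction theorem} to the sandwiched solutions $u$ and $V_{\lambda_0}$, and using \Cref{u0-L1-bd5}, we obtain $\int|u-V_{\lambda_0}|(\cdot,t)\varphi_\mu\,\d x\le M_0<\infty$ for all $t>0$, which at $t=1$ rescales to $\int|\widetilde u(\cdot,0)-f_{\lambda_0}|\varphi_\mu\,\d y\le M_0$. Differentiating $\int|\widetilde u-f_{\lambda_0}|\varphi_\mu\,\d y$ along the rescaled equation (via Kato's inequality and integration by parts) produces
\[\frac{d}{d\tau}\!\int|\widetilde u-f_{\lambda_0}|\varphi_\mu\,\d y \le \int\tfrac{|\widetilde u^m-f_{\lambda_0}^m|}{m}\Delta\varphi_\mu\,\d y + (\alpha-n\beta)\!\int|\widetilde u-f_{\lambda_0}|\varphi_\mu\,\d y - \beta\!\int|\widetilde u-f_{\lambda_0}|\,y\cdot\nabla\varphi_\mu\,\d y,\]
and each of the three terms on the right is non-positive: the first by \Cref{laplace-phi-negative}; the second because $\alpha-n\beta\le 0$, which (via $\beta=1/(2-\gamma(1-m))$) is equivalent to $\gamma\ge n$; the third because $-\beta>0$ while $y\cdot\nabla\varphi_\mu\le 0$ by \Cref{phi'-negative}. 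Hence $\tau\mapsto\int|\widetilde u-f_{\lambda_0}|\varphi_\mu\,\d y$ is non-increasing. Moreover, \Cref{phi-asymptotic behaviour2}--\Cref{phi-derivative-asymptotic behaviour2} show that the drift term dominates $c\int_{|y|>R_0}|\widetilde u-f_{\lambda_0}|\varphi_\mu\,\d y$ for some $c>0$, so integration in $\tau$ yields
\[\int_0^{\infty}\!\!\int_{|y|>R_0}|\widetilde u-f_{\lambda_0}|\varphi_\mu\,\d y\,\d\tau<\infty.\]

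\textbf{Identification and conclusion.} The last bound, combined with the $C^{2,1}_{loc}$ precompactness, forces the limit $g$ to satisfy $g(\cdot,\sigma)=f_{\lambda_0}$ on $\{|y|>R_0\}$ for every $\sigma\in\mathbb{R}$ (after a further diagonal extraction, and using Fatou together with continuity of $g$ in $\sigma$). Since $g$ solves the rescaled equation between strictly positive smooth barriers, its linearization about the stationary $f_{\lambda_0}$ is uniformly parabolic with smooth bounded coefficients, and classical unique continuation forces $g\equiv f_{\lambda_0}$ on $(\mathbb{R}^n\setminus\{0\})\times\mathbb{R}$. As every subsequence produces the same limit, $\widetilde u(\cdot,\tau)\to f_{\lambda_0}$ in $C^2_{loc}(\mathbb{R}^n\setminus\{0\})$, which is the uniform-on-compacta statement. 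Combining this uniform convergence on annuli, the coercive far-field estimate above, and the uniform integrability near $y=0$ provided by the sandwich together with the leading-order asymptotics from \Cref{asymptotic expansion of self-similar solution at the origin thm} (which pin the common leading singularity to $A_0|y|^{-\gamma}$), one obtains $\int|\widetilde u(\tau_k)-f_{\lambda_0}|\varphi_\mu\,\d y\to 0$ along a subsequence; the monotonicity then yields full convergence in $L^1(\varphi_\mu;\mathbb{R}^n)$.

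\textbf{Main obstacle.} The principal difficulty is pushing the identification $g=f_{\lambda_0}$ from the exterior region $\{|y|>R_0\}$ across the singular point $y=0$, and ruling out mass concentration there in the $L^1(\varphi_\mu)$ norm. The sign condition $\alpha-n\beta\le 0$---which is precisely the borderline regime $\gamma\ge n$ treated in this theorem (not covered by the earlier work of Hui--Kim)---is what makes the rescaled weighted $L^1$ quantity non-increasing, and the fine information of \Cref{asymptotic expansion of self-similar solution at the origin thm} is essential to control the difference $|\widetilde u-f_{\lambda_0}|$ near the origin uniformly in $\tau$.
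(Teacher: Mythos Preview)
Your identification strategy---a dissipation estimate plus spatial unique continuation from $\{|y|>R_0\}$---is genuinely different from the paper's, which instead combines the \emph{strict} weighted $L^1$-contraction of Lemma~\ref{Strong contraction principle for the rescaled functions lemma} with a constancy-of-norm argument (Lemmas~\ref{compare principle two lemma} and~\ref{limit-lemma 2}): any subsequential limit $\widetilde v$ satisfies $\|\widetilde v(\cdot,\tau)-f_{\lambda_0}\|_{L^1(\varphi_\mu)}\equiv\mathrm{const}$, and strictness then forces $\widetilde v=f_{\lambda_0}$. Your unique-continuation step is sound once phrased on annuli $\{\varepsilon<|y|<2R_0\}$ (where the linearized equation is uniformly parabolic) followed by $\varepsilon\to 0$.

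The genuine gap is the claimed ``uniform integrability near $y=0$'' for the $L^1(\varphi_\mu)$ convergence. You invoke \Cref{asymptotic expansion of self-similar solution at the origin thm} to ``pin the common leading singularity to $A_0|y|^{-\gamma}$'', but that theorem expands only the profiles $f_\lambda$, not $\widetilde u$. The sandwich gives merely $|\widetilde u-f_{\lambda_0}|\le f_{\lambda_2}-f_{\lambda_1}\sim (A_2-A_1)|y|^{-\gamma}$ near $0$, which is \emph{not} integrable precisely because $\gamma\ge n$---the new regime of this theorem. Nothing in your argument pins the leading coefficient of $\widetilde u(\cdot,\tau)$ near the origin to $A_0$ uniformly in $\tau$; that is essentially the conclusion you are trying to reach.

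For $\gamma>n$ you can in fact close the argument more cheaply than you propose: dropping the first two (non-positive) terms in your monotonicity inequality leaves $\frac{d}{d\tau}h(\tau)\le(\alpha-n\beta)h(\tau)$ with $h(\tau)=\|\widetilde u(\cdot,\tau)-f_{\lambda_0}\|_{L^1(\varphi_\mu)}$ and $\alpha-n\beta=(\gamma-n)\beta<0$, so $h(\tau)\to 0$ exponentially and no near-origin analysis is needed. At the borderline $\gamma=n$ this zeroth-order dissipation vanishes, the drift and diffusion dissipation also vanish on $\{|y|\le 1\}$ (since $\nabla\varphi_\mu=0$ and $\Delta\varphi_\mu=0$ there), and one genuinely needs the paper's strict-contraction route together with Lemma~\ref{limit-lemma 2} to obtain the $L^1(\varphi_\mu)$ subsequential convergence that your sketch does not supply.
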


The plan of the paper is as follows. In \Cref{sec:exist-uniq-sing} we will prove Theorem \ref{existence-self-similar-soln-thm}. In \Cref{sec:asympt-expans-sing} we will prove Theorem \ref{asymptotic expansion of self-similar solution at the origin thm}. Finally we will prove Theorem \ref{weighted contraction theorem} and Theorem \ref{asymptotic large time behaviour of solution thm} in  \Cref{sec:asympt-large-time}. In this paper unless stated otherwise we will assume that \eqref{n-m-relation}, \eqref{eq:gamma range}, \eqref{alpha'-beta'-defn} and \eqref{eq:alpha beta relation3} hold for the rest of the paper.

We start with some definition. For any $0\leq u_0\in L^1_{loc}(\R^n\setminus\{0\}),$  we say that $u$ is a solution of \eqref{eq-fde2-global-except-0}   
if $u>0$ in $(\R^n\setminus\{0\})\times(0, \infty)$ satisfies \eqref{fde} in $ \left(\R^n\setminus\{0\}\right)\times(0, \infty)$ in the classical sense and 
\begin{equation}\label{eq-u-initial-value}
\| u(\cdot, t)-u_0\|_{L^1(K)}\to 0\quad\mbox{ as }t\to 0
\end{equation} 
for any compact set $K\subset \mathbb{R}^n\setminus\{0\}.$  For any $x_0\in\mathbb{R}^n,$ and $R>0,$ we let $B_R(x_0)=\{x\in\mathbb{R}^n: |x-x_0|<R\}$ and $B_R=B_R(0).$ We also let $\mathcal{A}_R= B_{R}\setminus \overline B_{1/R}$ for any $R>1$.

\section{Existence and uniqueness of singular radially symmetric self-similar solution}\label{sec:exist-uniq-sing}

In this section we will use a modification of the technique of \cite{Hui23existe} and fixed point theory to give another proof of Theorem \ref{existence-self-similar-soln-thm}.
Suppose $f$ is a radially symmetric solution of \eqref{eq:one point singular:25} satisfying \eqref{eq:one point singular:27} for some $\eta>0$. Let $w$ be given by  \eqref{w-defn},
\begin{equation}\label{eq:one point singular:1}
\widetilde{w}(s)=w(e^s),\quad s=\log r\quad\forall r\in\mathbb{R}^+
\end{equation}
and  
\begin{equation}\label{z-defn}
z(s)=r \frac{w_r(r)}{w(r)} = \frac{ \round[s] w(e^s)}{w(e^s)},\quad s=\log r\quad\forall r\in\mathbb{R}^+.
\end{equation}
Then by direct computation  (cf. \cite{Hsu12singul} and \cite{Hui23existe}) we have
\begin{equation}\label{eq:one point singular:4}
\left(\frac{w_r}{w}\right)_r+\frac{n-1-\frac{2m\alpha}{\beta}}{r}\cdot\frac{w_r}{w}
+m\left(\frac{w_r}{w}\right)^2
+\frac{\beta r^{-1+\frac{\rho_1}{\beta}}w_r}{w^m}
=\frac{\alpha}{\beta}\cdot\frac{n-2-\frac{m\alpha}{\beta}}{r^2}\quad\forall r>0
\end{equation}
and 
\begin{equation}\label{z-eqn}
z_s+\left(n-2-\frac{2m\alpha}{\beta}\right)z+mz^2+\beta e^{\frac{\rho_1}{\beta}s}\widetilde{w}^{1-m}z=\frac{\alpha}{\beta}\left(n-2-\frac{m\alpha}{\beta}\right)\quad\forall s\in\mathbb{R}.
\end{equation} 
Let 
\begin{equation}\label{eq:one point singular:2}
h(s)=z(s)+\left(\frac{ n-2}{m} - \frac{\alpha}{\beta}\right)\quad\forall s\in\mathbb{R}
\end{equation}
and
\begin{equation}\label{C1-defn}
C_1= \frac{n-2}{m} -\frac{\alpha}{\beta}.
\end{equation}
Then $C_1>0$ and $h$ satisfies
\begin{equation}\label{eq:h_s}
h_s-\left(n-2+\beta' e^{-\frac{\rho_1 s}{ \beta'}}\widetilde{w}^{1-m}\right) h=-\beta' C_1e^{-\frac{\rho_1s}{\beta'}} \widetilde{w}^{1-m}- mh^2  \quad\forall s\in\mathbb{R}.
\end{equation} 
Hence finding a radially symmetric solution $f$ of \eqref{eq:one point singular:25} satisfying \eqref{eq:one point singular:27} is equivalent to solving \eqref{eq:h_s}
with appropriate conditions near $s=\pm\infty$.
Let
\begin{equation*}
 C_2:=\frac{\rho_1}{\beta'} +(1-m) C_1, \quad C_3:= \frac{\beta' C_1\eta_\infty^{1-m}+m}{C_2}
\end{equation*}
and
\begin{equation*}
C_4=\max\left(\frac{2C_3}{C_2},\frac{2^m\beta'C_1}{\eta_\infty^mC_2},\frac{2^m\beta'}{\eta_\infty^mC_2^2}\left(\beta'C_1\eta_{\infty}^{1-m}+C_3^2\right)\right),
\end{equation*}
where $\eta_{\infty}$ is any positive constant. We observe that  $C_2>0$.
\begin{lemma}\label{lemma:local existence of h}
Let $n \ge 3$, $0<m<\frac{n-2}{n}$, $\eta_{\infty}>0$, $b_2\in\mathbb{R}$ and $\alpha$, $\alpha'$, $\beta$, $\beta'$ and $\rho_1$ satisfy \eqref{alpha'-beta'-defn} and \eqref{eq:alpha beta relation3}. 
Then there exists a constant $b_1>0$ such that the equation \Cref{eq:h_s} has a unique solution $h$ in $(b_1,\infty)$ satisfying 
\begin{equation}\label{eq:bound h}
 0<h(s) \le C_3e^{-C_2s}\quad\forall s>b_1
\end{equation} 
with
\begin{equation}\label{eq:one point singular:3}
\widetilde{w}(s) = \eta_\infty \exp \left( -\int_s^{\infty} h(\rho) \,\mathrm{d} \rho \right) e^{-C_1s}\
\end{equation}
for any  $s>b_1$.
\end{lemma}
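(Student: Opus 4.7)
The plan is to recast \eqref{eq:h_s}, coupled with the defining formula \eqref{eq:one point singular:3} for $\widetilde{w}$, as a single fixed point equation for $h$ on $(b_1,\infty)$ and to apply the Banach contraction principle in a weighted sup-norm space, taking $b_1$ sufficiently large. For any $h\in C([b_1,\infty))$ with $0\le h(s)\le C_3 e^{-C_2 s}$, the integral $\int_s^{\infty} h$ converges, so I would define
\[
\widetilde{w}[h](s):=\eta_\infty\exp\!\Bigl(-\int_s^{\infty}h(\rho)\,\mathrm{d}\rho\Bigr)\,e^{-C_1 s},\quad A[h](s):=n-2+\beta' e^{-\rho_1 s/\beta'}\widetilde{w}[h](s)^{1-m},
\]
and $B[h](s):=\beta' C_1 e^{-\rho_1 s/\beta'}\widetilde{w}[h](s)^{1-m}$. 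Rewriting \eqref{eq:h_s} as $h_s = A[h]h - B[h] - mh^2$ and using variation of parameters with the boundary condition $h(s)\to 0$ as $s\to\infty$ (valid because $A[h]\ge n-2>0$ produces exponential decay of the integrating factor), I arrive at the integral equation
\[
h(s)=(Th)(s):=\int_s^{\infty}\exp\!\Bigl(-\int_s^{\rho}A[h](\sigma)\,\mathrm{d}\sigma\Bigr)\bigl(B[h](\rho)+m\,h(\rho)^2\bigr)\,\mathrm{d}\rho.
\]

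I would set up the fixed point on $\mathcal{X}_{b_1}:=\{h\in C([b_1,\infty)):0\le h(s)\le C_3 e^{-C_2 s}\}$ endowed with the weighted norm $\|h\|_{*}:=\sup_{s\ge b_1}e^{C_2 s}|h(s)|$. For the invariance $T\mathcal{X}_{b_1}\subset \mathcal{X}_{b_1}$, the bound $\exp(-\int_s^{\infty}h)\le 1$ gives $\widetilde{w}[h](\rho)\le \eta_\infty e^{-C_1\rho}$, and the defining identity $\tfrac{\rho_1}{\beta'}+(1-m)C_1=C_2$ then forces $B[h](\rho)\le \beta' C_1\eta_\infty^{1-m}e^{-C_2\rho}$. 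Combining this with $\exp(-\int_s^{\rho}A[h])\le e^{-(n-2)(\rho-s)}$ and $mh(\rho)^2\le mC_3^2 e^{-2C_2\rho}$, a direct integration produces
\[
(Th)(s)\le \frac{\beta' C_1\eta_\infty^{1-m}}{n-2+C_2}\,e^{-C_2 s}+\frac{mC_3^2}{n-2+2C_2}\,e^{-2C_2 s},
\]
which by the choice $C_3=(\beta' C_1\eta_\infty^{1-m}+m)/C_2$ is dominated by $C_3 e^{-C_2 s}$ once $b_1$ is large; strict positivity $Th>0$ is immediate from $B[h]>0$.

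For the contraction, given $h_1,h_2\in\mathcal{X}_{b_1}$, the elementary inequality $|e^{-x_1}-e^{-x_2}|\le |x_1-x_2|$ gives $|\widetilde{w}[h_1]-\widetilde{w}[h_2]|(s)\le \tfrac{\eta_\infty}{C_2}e^{-(C_1+C_2)s}\|h_1-h_2\|_{*}$. Because each $\widetilde{w}[h_i]$ has a positive lower bound $c_0 e^{-C_1 s}$ on $\mathcal{X}_{b_1}$, the mean value theorem applied to $x\mapsto x^{1-m}$ together with the identity defining $C_2$ yields $|A[h_1]-A[h_2]|(s)+|B[h_1]-B[h_2]|(s)\le c_1 e^{-2C_2 s}\|h_1-h_2\|_{*}$. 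Using these, $|h_1^2-h_2^2|(s)\le 2C_3 e^{-2C_2 s}\|h_1-h_2\|_{*}$, and the Lipschitz bound $\bigl|e^{-\int_s^{\rho}A[h_1]}-e^{-\int_s^{\rho}A[h_2]}\bigr|\le e^{-(n-2)(\rho-s)}\int_s^{\rho}|A[h_1]-A[h_2]|$, one integrates as in the invariance step to obtain $e^{C_2 s}|(Th_1-Th_2)(s)|\le \kappa(b_1)\|h_1-h_2\|_{*}$ with $\kappa(b_1)\to 0$ as $b_1\to\infty$. Choosing $b_1$ so large that $\kappa(b_1)<1$ makes $T$ a strict contraction on $(\mathcal{X}_{b_1},\|\cdot\|_{*})$; the Banach theorem delivers a unique fixed point $h$, differentiating the integral equation recovers \eqref{eq:h_s}, while \eqref{eq:one point singular:3} is built into the definition of $\widetilde{w}[h]$.

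The main difficulty I expect lies in the contraction estimate rather than in invariance: because $A[h]$ and $B[h]$ depend on $h$ nonlocally through $\widetilde{w}[h]$, I must propagate a difference $h_1-h_2$ through four successive operations (integration from $s$ to $\infty$, the exponential map, the power $x^{1-m}$, and the $\rho$-integration against the integrating factor) while keeping an overall decay rate $e^{-C_2 s}$ that matches the weight. Every cancellation that makes this possible comes back to the arithmetic identity $\tfrac{\rho_1}{\beta'}+(1-m)C_1=C_2$ built into the constant $C_2$, and verifying that each error term in fact decays strictly faster than $e^{-C_2 s}$ (so that $\kappa(b_1)\to 0$) is the technical heart of the argument.
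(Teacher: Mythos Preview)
Your proof is correct and follows essentially the same contraction-mapping approach as the paper. The only organizational difference is that the paper sets up the fixed point on pairs $(\widetilde{w},h)$ (with the $h$-component measured in the weight $e^{C_2 s/2}$, plus a separate pointwise constraint $0\le h\le C_3e^{-C_2 s}$ in the domain $\mathcal{D}_{b_1}$), whereas you work with $h$ alone under the weight $e^{C_2 s}$ and treat $\widetilde{w}=\widetilde{w}[h]$ as a functional of $h$; since the paper's $\Phi_1$ already depends only on $h$, this is a mild streamlining of the same argument.
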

\begin{proof}
Let $b_1>0$. We define the Banach space
\begin{equation*}
 \mathcal{X}_{b_1} = \left\{ (\widetilde{w},h) \; : \; \widetilde{w},h \in C((b_1,\infty);\mathbb{R})\mbox{ such that}\left \|(\widetilde{w},h)\right \| _{\mathcal{X}_{b_1}}<\infty\right\}
  \end{equation*}
where
\begin{equation*}
\left \|(\widetilde{w},h)\right \| _{\mathcal{X}_{b_1}}= \max \left\{ \left \|\widetilde{w}  \right \| _{L^{\infty} \left(  (b_1,\infty);e^{C_1s} \right)}, \left \|h\right \| _{L^{\infty}\left( (b_1,\infty) ; e^{\frac{C_2}{2}s} \right)}  \right\}
\end{equation*}
and
\begin{equation*}
\left \|v\right \|_{L^{\infty} \left( (b_1,\infty);e^{\lambda s} \right)} = \sup_{b_1 <s<\infty} |v(s) e^{\lambda s} |
\end{equation*}
for any $\lambda \in \mathbb{R}$. Let
\begin{equation}\label{eq:one point singular:6}
\varepsilon_1 := \frac{1}{2}\min \left( 1,\eta_\infty \right)
\end{equation}
and 
\begin{align*}
\mathcal{D}_{b_1}=&\left\{(\widetilde{w},h)\in\mathcal{X}_{b_1} : \|(\widetilde{w},h)-(\eta_\infty e^{-C_1s},0)\|_{\mathcal{X}_{b_1}}\le\varepsilon_1\text{ and }\right.\notag\\
&\qquad\left.\widetilde{w}(s) e^{C_1s} \le \eta_\infty, 0 \le h(s)e^{C_2s} \le C_3\,\forall s>b_1 \right\}.
\end{align*}
Note that there exists a constant $C_5>0$ such that 
\begin{equation}\label{eq:one point singular:15}
1-\exp \left( - \frac{C_3}{C_2} x \right) \le C_5 x \quad \forall x>0.
\end{equation}
We now choose 
\begin{equation}\label{def of b0}
b_1>b_0:=\frac{4}{C_2}\max\left(1,\log (15C_4),
\log\left(\frac{10\eta_\infty+C_3+\beta'\eta_\infty^{1-m}}{C_2}\right),\log\left(\frac{C_3+C_5\eta_\infty}{\3_1}\right)\right). 
\end{equation}
Since $(\eta_\infty e^{-C_1s} , \min(C_3 , \varepsilon_1)e^{-C_2s}) \in \mathcal{D}_{b_1}$, $\mathcal{D}_{b_1}\ne\phi$. For any $(\widetilde{w},h) \in \mathcal{D}_{b_1} $, let
\begin{equation}\label{Phi-map-defn}
\Phi(\widetilde{w},h) := (\Phi_1 (\widetilde{w},h),\Phi_2(\widetilde{w},h)) 
\end{equation}
be given by
\begin{equation}\label{eq:one point singular:8}
\left\{\begin{aligned}
&\Phi_1(\widetilde{w},h)(s)= \eta_\infty\exp\left(-\int_s^{\infty} h(\rho) \,\mathrm{d}\rho \right) e^{-C_1s} 
\\
&\Phi_2(\widetilde{w},h)(s)= \int^{\infty}_s e^{- \beta'\int^{\rho}_s e^{-\frac{\rho_1\rho'}{\beta'}} \widetilde{w}^{1-m}(\rho')\,\mathrm{d} \rho'+(n-2)(s-\rho)}\left( \beta' C_1 e^{-\frac{\rho_1\rho}{\beta'}}\widetilde{w}^{1-m}(\rho) + mh(\rho)^2 \right) \,\mathrm{d} \rho
\end{aligned}\right.
\end{equation} 
for any $s>b_1$. We first prove that the map $\Phi$ is closed, i.e. $\Phi(\mathcal{D}_{b_1}) \subset \mathcal{D}_{b_1} $. Let $(\widetilde{w},h) \in \mathcal{D}_{b_1}$. Then 
\begin{equation}\label{h-range}
0 \le h(s)e^{C_2s} \le C_3  \quad \forall s>b_1.
\end{equation}
By \eqref{eq:one point singular:6} and the definition of $\mathcal{D}_{b_1}$, 
\begin{equation}\label{eq:one point singular:9}
\frac{\eta_\infty}{2}\le \widetilde{w}(s) e^{C_1s} \le \eta_\infty \quad \forall s>b_1.
\end{equation}
By \eqref{eq:one point singular:8} and \eqref{h-range},
\begin{equation}\label{Phi1-upper-bd}
\Phi_1(\widetilde{w},h)(s)e^{C_1s}\le\eta_\infty\quad\forall s>b_1.
\end{equation} 
By \eqref{eq:one point singular:6}, \eqref{def of b0}, \eqref{eq:one point singular:8}, \eqref{h-range} and  \eqref{eq:one point singular:9} we have
\begin{align}\label{eq:one point singular:11} 
0<\Phi_2 (\widetilde{w},h)(s) 
& \le \int^{\infty}_s \left( \beta' C_1 e^{-C_2 \rho} \eta_\infty^{1-m}  + m \varepsilon  _1^2e^{-C_2\rho}\right) \,\mathrm{d} \rho \notag\\
& \le (\beta' C_1\eta_\infty^{1-m}+m)\int_{ s}^{\infty}e^{-C_2\rho}\,d\rho\notag\\
&=C_3e^{-C_2s}\quad\forall s>b_1.
\end{align}
By \eqref{Phi1-upper-bd} and \eqref{eq:one point singular:11}, $\Phi(\widetilde{w},h)\in \mathcal{X}_{b_1}$.
Hence by \eqref{def of b0} and \eqref{eq:one point singular:11}, 
\begin{equation}\label{Phi2-weighted-norm-bd}
0<\Phi_2 (\widetilde{w},h)(s)e^{\frac{C_2}{2}s}\le C_3e^{-\frac{C_2}{2}s}\le\3_1\quad\forall s>b_1\quad
\Rightarrow\quad\|\Phi_2 (\widetilde{w},h)\|_{L^{\infty}\left( (b_1,\infty) ; e^{\frac{C_2}{2}s} \right)} \le\3_1.
\end{equation}
By \eqref{eq:one point singular:15}, \Cref{def of b0} and \eqref{h-range},
\begin{align}\label{Phi1-eta0-difference-bd}
\left|  \Phi_1(\widetilde{w},h)(s) -\eta_\infty e^{-C_1s} \right|e^{C_1s}
&=\eta_\infty\left(1-\exp\left(-\int_s^{\infty} h(\rho) \,\mathrm{d} \rho \right)\right)\notag \\
& \le\eta_\infty \left(1 - \exp \left(-C_3\int_s^{\infty}  e^{-C_2 \rho} \,\mathrm{d} \rho \right)\right) \notag\\
& \le  \eta_\infty \left(1- \exp \left(-\frac{C_3}{C_2} e^{-C_2s} \right)\right)\notag  \\
& \le \eta_\infty C_5e^{-C_2s}\notag \\
& \le \varepsilon_1 \qquad\quad \forall s>b_1.
\end{align} 
Hence
\begin{equation}\label{Phi1-weighted-norm-bd}
\|\Phi_1(\widetilde{w},h)(s)-\eta_\infty e^{-C_1s}\|_{L^{\infty}\left( (b_1,\infty) ; e^{C_1s} \right)}\le \varepsilon_1.
\end{equation}
By \eqref{Phi1-upper-bd}, \eqref{eq:one point singular:11}, \eqref{Phi2-weighted-norm-bd} and \eqref{Phi1-weighted-norm-bd} we get $\Phi(\mathcal{D}_{b_1}) \subset \mathcal{D}_{b_1} $. 

 Next we claim that the map $\Phi\; : \; \mathcal{D}_{b_1} \to \mathcal{D}_{b_1}  $ is a contraction map. In order to prove this claim we let $(\widetilde{w}_1,h_1), \; (\widetilde{w}_2,h_2) \in \mathcal{D}_{b_1}$ and $\delta := \left \|    (\widetilde{w}_1,h_1) - (\widetilde{w}_2,h_2 )      \right \|_{\mathcal{X} _{b_1}}$. Then
 \begin{equation}\label{h-range2}
0 \le h_i(s)e^{C_2s} \le C_3  \quad \forall s>b_1,i=1,2.
\end{equation}
and
\begin{equation}\label{w-tilde-range2}
\frac{\eta_\infty}{2}\le \widetilde{w}_i(s) e^{C_1s} \le \eta_\infty \quad \forall s>b_1,i=1,2.
\end{equation}
By  the mean value theorem for any $s>b_1$ there exists a constant $\xi>0$ between $\int_s^{\infty} h_1 (\rho) \,\mathrm{d} \rho  $  and $\int_s^{\infty} h_2 (\rho) \,\mathrm{d} \rho $ such that
\begin{equation}\label{Phi1-lip-estimate}
|\Phi_1(\widetilde{w}_1,h_1)(s) - \Phi_1(\widetilde{w}_2,h_2)(s)|e^{C_1s}
=\eta_\infty e^{-\xi}\left|\int_s^{\infty}(h_1 (\rho)-h_2 (\rho) )\,\mathrm{d}\rho\right|.
\end{equation}
Hence by \eqref{def of b0} and \eqref{Phi1-lip-estimate},
\begin{align*}
|\Phi_1(\widetilde{w}_1,h_1)(s) - \Phi_1(\widetilde{w}_2,h_2)(s)|e^{C_1s}
\le&\eta_\infty\left\|h_1-h_2\right \| _{L^{\infty}((b_1,\infty) ; e^{\frac{C_2}{2}s})} \int_s^{\infty} e^{-\frac{C_2}{2} \rho} \,\mathrm{d} \rho\notag\\
\le&\frac{2\eta_\infty }{C_2}  e^{-\frac{C_2}{2}s} \delta\quad \forall s>b_1\notag \\
\le&\frac{\delta}{5}\qquad\qquad\quad \forall s>b_1.
\end{align*}   
Thus
\begin{equation}\label{Phi1-contraction} 
\|\Phi_1(\widetilde{w}_1,h_1) -\Phi_1(\widetilde{w}_2,h_2)\|_{L^{\infty}((b_1,\infty);e^{C_1s})}\le\frac{\delta}{5}.
\end{equation}
On the other hand by \eqref{def of b0}, \eqref{eq:one point singular:8}, \eqref{h-range2} and \eqref{w-tilde-range2},
\begin{align}\label{eq:one point singular:7} 
&|\Phi_2(\widetilde{w}_1,h_1)(s)-\Phi_2(\widetilde{w}_2,h_2)(s)|\notag\\ 
\le&\beta' C_1\int^{\infty}_s \exp\left(-\beta'\int^{\rho}_s e^{-\frac{\rho_1}{\beta'}\rho'} \4{w}_1(\rho')^{1-m}\,d\rho'\right)e^{-\frac{\rho_1\rho}{\beta'}}|\widetilde{w}_1(\rho)^{1-m}-\widetilde{w}_2 (\rho)^{1-m}|\,\mathrm{d}\rho\notag\\
&+m\int^{\infty}_s \exp\left(-\beta'\int^{\rho}_s e^{-\frac{\rho_1}{\beta'}\rho'} \4{w}_1(\rho')^{1-m}\,d\rho'\right)|h_1(\rho)^2-h_2(\rho)^2|\,\mathrm{d} \rho\notag\\
 & +\int_s ^{ \infty} \left| e^{-\beta'\int^{\rho}_s e^{-\frac{\rho_1\rho'}{\beta'}}\4{w}_1(\rho')^{1-m}\,d\rho'} - e^{-\int^{\rho}_s \beta' e^{-\frac{\rho_1\rho'}{\beta'}} \widetilde{w}_2^{1-m}(\rho')\,d\rho'} \right|(\beta' C_1 \eta_\infty^{1-m}+ C_3^2 )e^{-C_2 \rho}\,\mathrm{d} \rho\notag \\
\le&\beta' C_1\int^{\infty}_s e^{-\frac{\rho_1\rho}{\beta'}}|\widetilde{w}_1(\rho)^{1-m}-\widetilde{w}_2 (\rho)^{1-m}|\,\mathrm{d}\rho+m\int^{\infty}_s|h_1(\rho)^2 -h_2(\rho) ^2|  \,\mathrm{d} \rho\notag\\
& +\int_s ^{ \infty}\left| e^{-\beta'\int^{\rho}_s  e^{-\frac{\rho_1\rho'}{\beta'}} \widetilde{w}_1(\rho')^{1-m}\,d\rho' } - e^{-\beta'\int^{\rho}_s e^{-\frac{\rho_1\rho'}{\beta'}} \widetilde{w}_2(\rho')^{1-m}\,d\rho'} \right|(\beta' C_1 \eta_\infty^{1-m}+ C_3^2 )e^{-C_2 \rho}\,\mathrm{d} \rho\notag \\
:=&I_1+I_2 +I_3\quad\forall s>b_1.
\end{align}
Note that by \eqref{h-range2} and \eqref{w-tilde-range2} we have
\begin{align}\label{eq:one point singular:18} 
|\widetilde{w}_1(\rho)^{1-m} - \widetilde{w}_2 (\rho)^{1-m} | 
&=\left|\int^1_0 \round[s] (s \widetilde{w}_1(\rho) + (1-s) \widetilde{w}_{2}(\rho)    )^{1-m} \,\mathrm{d} s\right| \notag\\
& \le(1-m)| \widetilde{w}_1(\rho) - \widetilde{w}_2(\rho) | \int^1_0 (s \widetilde{w}_1(\rho) + (1-s) \widetilde{w}_2(\rho))^{-m} \,\mathrm{d} s\notag\\
& \le(1-m)(2/\eta_\infty)^me^{ -(1-m)C_1\rho} \delta\quad\forall\rho>b_1
\end{align}
and
\begin{align}\label{eq:one point singular:19} 
|h_1(\rho)^2  - h_2(\rho)^2  | & \le 2|h_1(\rho) -h_2 (\rho) |\max (h_1(\rho),h_2(\rho)) \le 2 C_3 e^{- \frac{3}{2} C_2 \rho} \delta\quad\forall\rho>b_1.   
\end{align}
By the mean value theorem for any $\rho>s>b_1$ there exists a constant $\xi_1>0$ between 
\begin{equation*}
\beta'\int^{\rho}_s  e^{-\frac{\rho_1\rho'}{\beta'}} \widetilde{w}_1^{1-m}(\rho')\,d\rho'\quad\mbox{ and }\quad 
\beta'\int^{\rho}_s  e^{-\frac{\rho_1\rho'}{\beta'}} \widetilde{w}_2^{1-m}(\rho')\,d\rho'
\end{equation*}
such that
\begin{align}\label{eq:one point singular:21a} 
& \left| e^{-\beta'\int^{\rho}_s  e^{-\frac{\rho_1\rho'}{\beta'}}  \widetilde{w}_1(\rho')^{1-m}\,d\rho'} - 
e^{-\beta'\int^{\rho}_s  e^{-\frac{\rho_1\rho'}{\beta'}} \widetilde{w}_2^{1-m}(\rho')\,d\rho'} \right|\nonumber\\
\le& \beta'e^{-\xi_1}\int^{\rho}_s  e^{-\frac{\rho_1\rho'}{\beta'}}|\widetilde{w}_1(\rho')^{1-m}-\widetilde{w}_2(\rho')^{1-m}|\,d\rho'\quad\forall\rho>s>b_1\notag\\
\le& \beta'\int^{\rho}_s  e^{-\frac{\rho_1\rho'}{\beta'}}|\widetilde{w}_1(\rho')^{1-m}-\widetilde{w}_2(\rho')^{1-m}|\,d\rho'\quad\forall\rho>s>b_1.
\end{align}
By  \eqref{eq:one point singular:18} and \eqref{eq:one point singular:21a},
\begin{align}\label{eq:one point singular:21} 
& \left| e^{-\beta'\int^{\rho}_s  e^{-\frac{\rho_1\rho'}{\beta'}}\widetilde{w}_1(\rho')^{1-m}\,d\rho'} - 
  e^{-\beta'\int^{\rho}_s  e^{-\frac{\rho_1\rho'}{\beta'}} \widetilde{w}_2^{1-m}(\rho')\,d\rho'} \right|\nonumber\\
\le &\beta'  \delta (1-m)(2/\eta_{\infty} )^m \int_s^{\rho} e^{-C_2 \rho'} \,\mathrm{d} \rho' 
 \notag \\
\le&\delta(\beta'/C_2)( 2/\eta_\infty)^me^{-C_2s}\qquad\qquad\quad\forall\rho>s>b_1.
\end{align}  
By \eqref{def of b0}, \eqref{eq:one point singular:7}, \Cref{eq:one point singular:18}, \Cref{eq:one point singular:19} and \Cref{eq:one point singular:21}, we get
\begin{align}\label{I1-upper-bd}
I_1\le\delta\beta' C_1(2/\eta_\infty)^m\int^{\infty}_se^{-C_2 \rho}\,\mathrm{d}\rho
  \le C_4 e^{-C_2s}\delta\quad\forall s>b_1
\end{align}
\begin{equation}\label{I2-upper-bd}
I_2\le 2mC_3\delta \int^{\infty}_se^{-\frac{3}{2}C_2\rho}\,\mathrm{d} \rho
\le 2\delta(C_3/C_2) e^{-\frac{3}{2}C_2s}\le C_4 e^{-\frac{3}{2}C_2s}\delta\quad\forall s>b_1
\end{equation}
and
\begin{equation}\label{I3-upper-bd}
  I_3\le\delta (\beta'/C_2)(2/\eta_\infty)^me^{-C_2s}\int^{\infty}_s(\beta'C_1\eta_\infty^{1-m}+C_3^2)e^{-C_2\rho}\,\mathrm{d} \rho\le C_4e^{-2C_2s}\delta \quad\forall s>b_1. 
\end{equation}
By \eqref{def of b0}, \eqref{eq:one point singular:7}, \eqref{I1-upper-bd}, \eqref{I2-upper-bd} and \eqref{I3-upper-bd},
\begin{align}\label{Phi2-contraction} 
&|\Phi_2(\widetilde{w}_1,h_1)(s) - \Phi_2(\widetilde{w}_2,h_2)(s)|e^{\frac{C_2}{2}s}\le 3C_4e^{-\frac{C_2}{2}s}\delta\le\frac{\delta}{5} \quad\forall s>b_1\notag\\
\Rightarrow\quad&\|\Phi_2(\widetilde{w}_1,h_1)-\Phi_2(\widetilde{w}_2,h_2)\|_{L^{\infty}\left((b_1,\infty);e^{ \frac{C_2}{2}s}\right)}\le\frac{\delta}{5}. 
\end{align}
By \eqref{Phi1-contraction}  and \eqref{Phi2-contraction},
\begin{equation*}
\|\Phi(\widetilde{w}_1,h_1)-\Phi(\widetilde{w}_2,h_2)\|_{\mathbb{X}_{b_1}}\le\frac{\delta}{5}.
\end{equation*}
Hence the map $\Phi:\mathcal{D}_{b_1} \to \mathcal{D}_{b_1}$ is a contraction map with Lipschitz constant less than $1/5$. Since $\mathcal{D}_{b_1}$ is a complete metric space, by the contraction mapping theorem there exists a unique fixed point  $(\widetilde{w},h)\in\mathcal{D}_{b_1}$. Then $(\widetilde{w},h)=\Phi(\widetilde{w},h)$. Hence
\begin{equation}\label{h-integral-representation}
h(s)=\int^{\infty}_s e^{- \beta'\int^{\rho}_s e^{-\frac{\rho_1\rho'}{\beta'}} \widetilde{w}^{1-m}(\rho')\,\mathrm{d} \rho'+(n-2)(s-\rho)}\left( \beta' C_1 e^{-\frac{\rho_1\rho}{\beta'}}\widetilde{w}^{1-m}(\rho) + mh(\rho)^2 \right) \,\mathrm{d} \rho\quad\forall s>b_1
\end{equation} 
with 
\begin{equation}\label{tilde-w-eqn}
\widetilde{w}(s)= \eta_\infty\exp\left(-\int_s^{\infty} h(\rho) \,\mathrm{d}\rho \right) e^{-C_1s}\quad\forall s>b_1. 
\end{equation}
Since $(\widetilde{w},h)\in\mathcal{D}_{b_1}$, \eqref{h-range} holds and
\begin{equation}\label{tilde-w-ineqn}
\widetilde{w}(s) e^{C_1s} \le \eta_\infty \quad \forall s>b_1.
\end{equation}
By \eqref{h-integral-representation}, $h(s)>0$ for any $s>b_1$. This together with \eqref{h-range} implies that \eqref{eq:bound h} holds.
Differenetiating \eqref{h-integral-representation} with respect to $s$, $s>b_1$,  we get that $h$ satisfies 
\eqref{eq:h_s} in $(b_1,\infty)$ with $\widetilde{w}$ given by \eqref{tilde-w-eqn}. 

We will now prove the uniqueness of solution of \eqref{eq:h_s}.
Suppose $h_1$ is any solution of 
\begin{equation}\label{eq:h1_s}
h_{1,s}-\left(n-2+\beta' e^{-\frac{\rho_1 s}{ \beta'}}\widetilde{w}_1^{1-m}\right) h_1=-\beta' C_1e^{-\frac{\rho_1s}{\beta'}} \widetilde{w}_1^{1-m}- mh_1^2 
\end{equation}  
in $(b_1,\infty)$ satisfying
\begin{equation}\label{eq:bound h1}
 0<h_1(s) \le C_3e^{-C_2s}\quad\forall s>b_1
\end{equation}
with 
\begin{equation}\label{tilde-w1-eqn}
\widetilde{w}_1(s)= \eta_\infty\exp\left(-\int_s^{\infty} h_1(\rho) \,\mathrm{d}\rho \right) e^{-C_1s}\quad\forall s>b_1. 
\end{equation}
Then by  \eqref{eq:bound h1} and \eqref{tilde-w1-eqn},
\begin{equation}\label{tilde-w1-ineqn}
\widetilde{w}_1(s) e^{C_1s}\le\eta_\infty \quad \forall s>b_1.
\end{equation}
By \eqref{eq:bound h1}, \eqref{tilde-w1-eqn} and an argument similar to the proof of \eqref{Phi1-eta0-difference-bd} we have
\begin{align}\label{w1-tilde-eta0-difference-bd}
&|\widetilde{w}_1(s) -\eta_\infty e^{-C_1s}|e^{C_1s}\le \varepsilon_1\quad \forall s>b_1\notag\\
\Rightarrow\quad&\|\widetilde{w}_1(s)-\eta_\infty e^{-C_1s}\|_{L^{\infty}\left( (b_1,\infty) ; e^{C_1s} \right)}\le \varepsilon_1.
\end{align}
By \eqref{def of b0} and \eqref{eq:bound h1},
\begin{equation}\label{h1-bd1}
0<h_1(s)e^{\frac{C_2}{2}s} \le C_3e^{-\frac{C_2}{2}s}\le\3_1\quad\forall s>b_1.
\end{equation}
By \eqref{tilde-w1-ineqn}, \eqref{w1-tilde-eta0-difference-bd} and \eqref{h1-bd1}, $(\widetilde{w}_1,h_1)\in\mathcal{D}_{b_1}$. By \eqref{eq:h1_s},
\begin{align}\label{eq:h1_s-2}
&\left(\exp\left(-\beta'\int^{\rho}_s e^{-\frac{\rho_1\rho'}{\beta'}} \widetilde{w}_1^{1-m}(\rho')\,\mathrm{d} \rho'+(n-2)(s-\rho)\right)h_1(\rho)\right)_{\rho}\notag\\
=&\exp\left(-\beta'\int^{\rho}_s e^{-\frac{\rho_1\rho'}{\beta'}} \widetilde{w}_1(\rho')^{1-m}\,\mathrm{d} \rho'+(n-2)(s-\rho)\right)\left(\beta' C_1e^{-\frac{\rho_1\rho}{\beta'}} \widetilde{w}_1(\rho)^{1-m}+mh_1(\rho)^2\right) 
\end{align} 
holds for any $\rho>s>b_1$.
Note that
\begin{align}\label{integral-factor-to-zero}
\exp\left(-\beta'\int^{\rho}_s e^{-\frac{\rho_1\rho'}{\beta'}} \widetilde{w}_1^{1-m}(\rho')\,\mathrm{d} \rho'+(n-2)(s-\rho)\right)
\le&\exp\left((n-2)(s-\rho)\right)\quad\forall \rho>s>b_1\notag\\
\to&0\quad\mbox{ as }\rho\to\infty.
\end{align}
Integrating \eqref{eq:h1_s-2} over $\rho\in (s,\infty)$, by \eqref{eq:bound h1} and \eqref{integral-factor-to-zero} we get that $h_1$ also satisfies \eqref{h-integral-representation} in $(b_1,\infty)$ with $\widetilde{w}$ being replaced by $\widetilde{w}_1$ given by \eqref{tilde-w-eqn}. Hence by the uniqueness of solution of \eqref{h-integral-representation} in $\mathcal{D}_{b_1}$, we get $(\widetilde{w}_1,h_1)=(\widetilde{w},h)$ and the lemma follows.

\end{proof}

\begin{lemma}\label{h-bded-lem}
Let $n \ge 3$, $0<m<\frac{n-2}{n}$, $\eta_{\infty}>0$, $b_2\in\mathbb{R}$ and $\alpha$, $\alpha'$, $\beta$, $\beta'$ and $\rho_1$ satisfy \eqref{alpha'-beta'-defn} and \eqref{eq:alpha beta relation3}. Suppose $h$ is a solution of \Cref{eq:h_s} in $(b_2,\infty)$ with $\widetilde{w}$ given by \eqref{eq:one point singular:3} which satisfies \eqref{eq:bound h} in $(b_1,\infty)$ for some constant $b_1>b_2$. Then
\begin{equation}\label{h-bd-11}
0<h(s)<\frac{n-2}{m}-\frac{\alpha}{\beta}\quad\forall s>b_2.
\end{equation}
\end{lemma}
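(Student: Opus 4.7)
The plan is a barrier argument on the ODE \eqref{eq:h_s}, propagating the two-sided bound $0 < h < C_1 := \frac{n-2}{m}-\frac{\alpha}{\beta}$ from $(b_1,\infty)$, where it follows from \eqref{eq:bound h}, down to all of $(b_2,\infty)$. The key observation is that the ODE \eqref{eq:h_s} has a clean form at the two threshold values $h=0$ and $h=C_1$, and the signs of $h_s$ there both point away from allowing a crossing. Throughout, $\widetilde{w}(s)>0$ for every $s\in(b_2,\infty)$: the tail integral $\int_s^{\infty} h(\rho)\,d\rho$ converges by \eqref{eq:bound h} on $(b_1,\infty)$ and by continuity of the solution $h$ on $[s,b_1]$, so the exponential formula \eqref{eq:one point singular:3} yields $\widetilde{w}>0$.

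For the upper bound $h(s) < C_1$, I would argue by contradiction. Suppose $h(s_0)\ge C_1$ for some $s_0\in(b_2,\infty)$. Since $h(s)\to 0$ as $s\to\infty$ by \eqref{eq:bound h}, the set $\{s>s_0:h(s)\ge C_1\}$ is bounded, so there is a largest $s_1$ with $h(s_1)=C_1$, and necessarily $h_s(s_1)\le 0$. Plugging $h=C_1$ into \eqref{eq:h_s}, the term $\beta'e^{-\rho_1 s/\beta'}\widetilde{w}^{1-m}h$ cancels the source $\beta' C_1 e^{-\rho_1 s/\beta'}\widetilde{w}^{1-m}$, leaving
\begin{equation*}
h_s(s_1) = (n-2)C_1 - mC_1^2 = C_1\Bigl((n-2) - m\Bigl(\tfrac{n-2}{m}-\tfrac{\alpha}{\beta}\Bigr)\Bigr) = C_1\cdot\frac{m\alpha}{\beta}.
\end{equation*}
Since $\alpha<0$ and $\beta<0$, this is strictly positive, contradicting $h_s(s_1)\le 0$.

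For the lower bound $h(s)>0$, the same kind of evaluation at the threshold works. If $h(s_1)=0$ for some $s_1\in(b_2,\infty)$, then evaluating \eqref{eq:h_s} at $s_1$ gives
\begin{equation*}
h_s(s_1) = -\beta' C_1 e^{-\rho_1 s_1/\beta'}\widetilde{w}(s_1)^{1-m} < 0,
\end{equation*}
since $\beta',C_1,\widetilde{w}(s_1)>0$. Thus $h$ is strictly decreasing at every one of its zeros. Combining this with the fact that $h>0$ on $(b_1,\infty)$: if there were a zero $s_1\in(b_2,b_1]$, then $h$ would become negative just to the right of $s_1$, and to reach the positive regime on $(b_1,\infty)$ it would need another zero $s_2\in(s_1,b_1]$ at which $h_s(s_2)\ge 0$, contradicting the strict decrease property at zeros. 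Hence $h$ has no zero on $(b_2,\infty)$, and by the established positivity on $(b_1,\infty)$ together with continuity, $h>0$ everywhere on $(b_2,\infty)$.

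There is no real obstacle here beyond bookkeeping of the signs of $\alpha$, $\beta$, $\beta'$, and the cancellation at $h=C_1$; the argument is essentially a one-dimensional maximum principle for an ODE with a barrier at each end of the admissible interval $(0,C_1)$. The two computations at $h=0$ and $h=C_1$, together with the asymptotic information from \eqref{eq:bound h}, close the proof.
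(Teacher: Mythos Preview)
Your proof is correct and takes a genuinely different route from the paper's. The paper argues $h>0$ on $(b_2,\infty)$ by noting that the integral representation
\[
h(s)=\int_s^\infty e^{-\beta'\int_s^\rho e^{-\rho_1\rho'/\beta'}\widetilde{w}^{1-m}\,d\rho'+(n-2)(s-\rho)}\Bigl(\beta'C_1 e^{-\rho_1\rho/\beta'}\widetilde{w}^{1-m}+mh^2\Bigr)\,d\rho
\]
extends from $(b_1,\infty)$ to all of $(b_2,\infty)$, and the integrand is manifestly positive; for $h<C_1$ the paper gives two methods, one via an integral inequality obtained by using $h<C_1$ on the right of the last touching point $b_4$ and integrating, and one via an inversion transform to a function $g$ together with a cited lemma from \cite{HK17asympt}. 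Your argument is a unified, purely pointwise barrier test: at $h=C_1$ the $\widetilde{w}$-dependent terms in \eqref{eq:h_s} cancel exactly, leaving $h_s=C_1\cdot m\alpha/\beta>0$, and at $h=0$ one reads off $h_s=-\beta'C_1 e^{-\rho_1 s/\beta'}\widetilde{w}^{1-m}<0$; both signs preclude a first crossing from the side where the bound is already known. This is shorter and more elementary than either of the paper's methods for the upper bound, and avoids any appeal to external results. The paper's integral representation gives the lower bound in one line, whereas your zero-crossing argument requires a small intermediate-value step; conversely, your cancellation observation at $h=C_1$ is cleaner than the paper's Method~1, which does not exploit it.
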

\begin{proof}
We first observe that by \Cref{eq:h_s} $h$ satisfies \eqref{h-integral-representation} for any $s>b_2$. Hence $h(s)>0$ for any $s>b_2$. We will give two different proofs for the right hand side inequality of \eqref{h-bd-11}.

\noindent{\bf Method 1:}

\noindent By \eqref{eq:bound h} there exists a constant $b_3>b_1$ such that
\begin{equation*}
h(s)<\frac{n-2}{m}-\frac{\alpha}{\beta}=C_1\quad\forall s>b_3.
\end{equation*}
Let $b_4=\inf\{b>b_2:h(s)<C_1\quad\forall s>b\}$. Then $b_4$ exists and $b_4<b_3$. Suppose $b_4>b_2$. Then
\begin{equation}\label{h-upper-bd12}
h(s)<C_1\quad\forall s>b_3\quad\mbox{ and }\quad h(b_4)=C_1.
\end{equation}
By \Cref{eq:h_s} and \eqref{h-upper-bd12},
\begin{align*}
h_s&-\left((n-2)+\beta' e^{-\frac{\rho_1 s}{ \beta'}}\widetilde{w}^{1-m}\right) h\ge -\beta' C_1e^{-\frac{\rho_1s}{\beta'}} \widetilde{w}^{1-m}- mC_1h  \quad\forall s>b_4\notag\\
\Rightarrow\qquad\quad h_s&-\beta' e^{-\frac{\rho_1 s}{ \beta'}}\widetilde{w}^{1-m} h\ge -\beta' C_1e^{-\frac{\rho_1s}{\beta'}} \widetilde{w}^{1-m}\quad\forall s>b_4\notag\\
\Rightarrow\quad h(s)\le&\int^{\infty}_s e^{- \beta'\int^{\rho}_s e^{-\frac{\rho_1\rho'}{\beta'}} \widetilde{w}^{1-m}(\rho')\,\mathrm{d} \rho'}\left( \beta' C_1 e^{-\frac{\rho_1\rho}{\beta'}}\widetilde{w}^{1-m}(\rho)  \right) \,\mathrm{d} \rho\quad\forall s\ge b_4\notag\\
=&-C_1\int^{\infty}_s \frac{\partial}{\partial \rho}\left( e^{- \beta'\int^{\rho}_s e^{-\frac{\rho_1\rho'}{\beta'}} \widetilde{w}^{1-m}(\rho')\,\mathrm{d} \rho'}\right) \,\mathrm{d} \rho\quad\forall s\ge b_4\notag\\
=&C_1\left(1-e^{- \beta'\int_s^{\infty} e^{-\frac{\rho_1\rho'}{\beta'}} \widetilde{w}^{1-m}(\rho')\,\mathrm{d} \rho'}\right)\quad\forall s\ge b_4\notag\\
\Rightarrow\quad h(s)<&C_1\quad\forall s\ge b_4
\end{align*}
which contradicts \eqref{h-upper-bd12}. Hence $b_4=b_2$ and the right hand side inequality of \eqref{h-bd-11} follows.

\noindent{\bf Method 2:}

\noindent Let
\begin{equation}\label{w-defn-w-tilde}
w(r)=\4{w}(s)\quad \forall r=e^s, s>b_2, 
\end{equation}
and $f$ be given by
\begin{equation}\label{f-define-w}
f(r)=r^{-\alpha/\beta}w(r)\quad\forall r>e^{b_2}.
\end{equation}
We also let $z$ be given by \eqref{eq:one point singular:2} and 
\begin{equation}\label{g-defn}
g(r)=r^{-\frac{n-2}{m}}f(r^{-1})\quad\forall r>e^{b_2}.
\end{equation}
Differentiating \eqref{eq:one point singular:3} we respect to $s$ we get
\begin{align}\label{h-wr-eqn}
&\4{w}_s(s)=(h(s)-C_1)\4{w}(s)\quad\forall s>b_2\notag\\
\Rightarrow\quad&h(s)=\frac{\4{w}_s(s)}{\4{w}(s)}+C_1=\frac{rw_r(r)}{w(r)}+C_1\quad\forall r=e^s, s>b_2.
\end{align}
Putting \eqref{h-wr-eqn} in \eqref{eq:h_s} we get that $w$ satisfies \eqref{eq:one point singular:4} in $(b_2,\infty)$. Putting \eqref{f-define-w} in \eqref{eq:one point singular:4} we get that $f$ satisfies \begin{equation}\label{eq:f-ODE}
(f^m/m)_{rr}+\frac{n-1}{r}(f^m/m)_r + \alpha f + \beta rf_r =0,\quad f>0,
\end{equation} 
in $(b_2,\infty)$. By \eqref{g-defn}, \eqref{eq:f-ODE} and a direct computation (cf. \cite{HK17asympt}), $g$ satisfies
\begin{equation}\label{eq-fde-inversion}
(g^m)''+\frac{n-1}{r}(g^m)' +r^{\frac{n-2-nm}{m}-2}(\tilde\alpha g+\tilde\beta r g_{r})=0,\quad g>0,
\end{equation}
in $(b_2,\infty)$ where
\begin{equation*}\label{eq-tilde-alpha-beta}
\tilde\beta=-\beta,\quad\mbox{and}\quad\tilde\alpha=\alpha-\frac{n-2}{m}\,\beta.
\end{equation*}
Thus
\begin{equation}\label{eq-tilde-alpha-beta-0}
\tilde\alpha>0,\quad\tilde\beta>0,\quad \frac{\tilde\alpha}{\tilde\beta}=\frac{n-2}{m}-\frac{\alpha}{\beta}\
=C_1 \in \left(0,\frac{n-2}{m}\right).
\end{equation}
 By \eqref{f-define-w} and \eqref{g-defn},
\begin{equation}\label{w-g-relation}
w(r)=r^{\frac{\alpha}{\beta}-\frac{n-2}{m}}g(r^{-1})\quad\forall r>e^{b_2}.
\end{equation}
Then by \eqref{w-g-relation},
\begin{equation}\label{w-g-derivative-relation}
\frac{rw_r(r)}{w(r)}=\frac{\alpha}{\beta}-\frac{n-2}{m}-\frac{\rho g_{\rho}(\rho)}{g(\rho)}\quad\forall \rho=r^{-1}, r>e^{b_2}.
\end{equation}
By \eqref{h-wr-eqn} and \eqref{w-g-derivative-relation},
\begin{equation}\label{h-g-relation}
h(s)=-\frac{\rho g_{\rho}(\rho)}{g(\rho)}\quad\forall \rho=e^{-s}, s>b_2.
\end{equation}
By \eqref{eq:one point singular:3}, \eqref{w-defn-w-tilde} and \eqref{w-g-relation},
\begin{align}
&g(\rho)= \eta_\infty\exp\left(-\int_{\log (1/\rho)}^{\infty} h(\rho') \,\mathrm{d}\rho' \right) \quad\forall \rho=e^{-s},s>b_2\label{g-integral-representation}\\
\Rightarrow\quad&\lim_{\rho\to 0}g(\rho)= \eta_\infty.\label{g-at-origin}
\end{align}
By \eqref{g-at-origin} we can extend $g$ to a continuous function on $[0,e^{-b_2})$ by defining
\begin{equation}\label{g-at-origin2}
g(0)=\eta_{\infty}.
\end{equation}
By \eqref{eq:bound h} and \eqref{g-integral-representation},
\begin{align}\label{g-derivative-limit}
&g_{\rho}(\rho)= -\eta_\infty\rho^{-1} h(\log(1/\rho))\exp\left(-\int_{\log (1/\rho)}^{\infty} h(\rho') \,\mathrm{d}\rho' \right) \quad\forall \rho=e^{-s},s>b_2\notag\\
\Rightarrow\quad&\lim_{\rho\to 0}\rho g_{\rho}(\rho)= -\eta_\infty \lim_{\rho\to 0}h(\log(1/\rho))\cdot\lim_{\rho\to 0}\exp\left(-\int_{\log (1/\rho)}^{\infty} h(\rho') \,\mathrm{d}\rho' \right)= -\eta_\infty\cdot 0=0.
\end{align}
By \eqref{eq-fde-inversion}, \eqref{eq-tilde-alpha-beta-0}, \eqref{g-at-origin2} and \eqref{g-derivative-limit} the hypothesis in Lemma 2.1 of \cite{HK17asympt} are satisfied. Hence by Lemma 2.1 of \cite{HK17asympt},
\begin{equation}\label{g-g-rho-ineqn}
C_1g(\rho)+\rho g_{\rho}(\rho)>0\quad\forall \rho=e^{-s},s>b_2.
\end{equation}
By \eqref{h-g-relation} and \eqref{g-g-rho-ineqn} we get the right hand side of \eqref{h-bd-11} holds and the lemma follows.

\end{proof}

\begin{theorem}\label{theorem:global existence of h}
Let $n \ge 3$, $0<m<\frac{n-2}{n}$, $\eta_{\infty}>0$, $b_2\in\mathbb{R}$ and $\alpha$, $\alpha'$, $\beta$, $\beta'$ and $\rho_1$ satisfy \eqref{alpha'-beta'-defn} and \eqref{eq:alpha beta relation3}.  Then the equation \Cref{eq:h_s} has a unique positive solution $h$ in $\mathbb{R}$ which satisfies \eqref{h-bd-11}  in $\mathbb{R}$ with $\widetilde{w}$ satisfying \eqref{eq:one point singular:3} in $\mathbb{R}$. Moreover $h$ satisfies \eqref{eq:bound h} in $(b_1,\infty)$ for some constant $b_1\in\mathbb{R}$. 
\end{theorem}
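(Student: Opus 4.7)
The plan is to extend the local solution of Lemma~\ref{lemma:local existence of h} backward to all of $\mathbb{R}$ by continuation of an ODE system, using the a priori bound of Lemma~\ref{h-bded-lem} to rule out blow-up. First I would rewrite the problem as a coupled first-order system. Differentiating \Cref{eq:one point singular:3} gives $\widetilde{w}_s=(h-C_1)\widetilde{w}$, so together with \Cref{eq:h_s} the pair $(\widetilde{w},h)$ satisfies
\begin{equation*}
\begin{cases}
\widetilde{w}_s=(h-C_1)\widetilde{w},\\
h_s=\bigl((n-2)+\beta' e^{-\rho_1 s/\beta'}\widetilde{w}^{1-m}\bigr)h-\beta' C_1 e^{-\rho_1 s/\beta'}\widetilde{w}^{1-m}-mh^2,
\end{cases}
\end{equation*}
whose right-hand side is $C^1$ in $(\widetilde{w},h)$ on $\mathbb{R}\times(0,\infty)\times\mathbb{R}$. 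Lemma~\ref{lemma:local existence of h} supplies a solution on $(b_1,\infty)$ with $0<h\le C_3 e^{-C_2 s}$ and $\widetilde{w}$ given by \Cref{eq:one point singular:3}; let $(a_0,\infty)$ denote its maximal left extension as a solution of the system with $\widetilde{w}>0$.

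Next I would invoke Lemma~\ref{h-bded-lem} (with $b_2=a_0$) to deduce $0<h<C_1$ throughout $(a_0,\infty)$; the hypothesis that $\widetilde{w}$ is given by \Cref{eq:one point singular:3} on $(a_0,\infty)$ follows by integrating $(\log\widetilde{w})_s=h-C_1$ from $s$ to $\infty$ and using $\widetilde{w}(s)e^{C_1 s}\to\eta_\infty$ inherited from the local solution. Since $h-C_1\in(-C_1,0)$, for any fixed $s_1\in(a_0,\infty)$ we have
\begin{equation*}
\widetilde{w}(s_1)\le\widetilde{w}(s)\le\widetilde{w}(s_1)e^{C_1(s_1-s)}\quad\forall\,s\in(a_0,s_1],
\end{equation*}
so $(\widetilde{w},h)$ remains in a compact subset of $(0,\infty)\times(0,C_1)$ on every compact subinterval of $(a_0,\infty)$. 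If $a_0>-\infty$, these uniform bounds on $(a_0,s_1]$ would permit continuation past $a_0$, contradicting maximality. Hence $a_0=-\infty$, and the identity \Cref{eq:one point singular:3} persists on all of $\mathbb{R}$.

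For uniqueness, suppose $h_1,h_2$ are two positive solutions of \Cref{eq:h_s} on $\mathbb{R}$ satisfying \Cref{h-bd-11} with $\widetilde{w}_i$ given by \Cref{eq:one point singular:3}. Then $\widetilde{w}_i(s)e^{C_1 s}\le\eta_\infty$ for all $s$, and feeding this together with $h_i<C_1$ into the integral representation \Cref{h-integral-representation} (which $h_i$ satisfies by integrating \Cref{eq:h_s}) yields the inequality
\begin{equation*}
h_i(s)\le\frac{\beta' C_1\eta_\infty^{1-m}}{n-2+C_2}e^{-C_2 s}+mC_1\int_s^{\infty}e^{(n-2)(s-\rho)}h_i(\rho)\,d\rho.
\end{equation*}
A Gr\"onwall-type bootstrap, exploiting the stability of the unstable equilibrium $h=0$ of the asymptotic equation $h_s=(n-2)h-mh^2$, yields $0<h_i(s)\le C_3 e^{-C_2 s}$ on some right half-line $(b_1,\infty)$. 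Thus $(\widetilde{w}_i,h_i)\in\mathcal{D}_{b_1}$ for $b_1$ large enough, and the uniqueness part of Lemma~\ref{lemma:local existence of h} forces $h_1=h_2$ on $(b_1,\infty)$; standard backward ODE uniqueness for the coupled $C^1$ system propagates equality to all of $\mathbb{R}$.

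The main obstacle I anticipate is the bootstrap in the uniqueness argument: upgrading the crude bounds $h<C_1$ and $\widetilde{w}(s)e^{C_1 s}\le\eta_\infty$ into the sharp decay rate \Cref{eq:bound h} requires careful iteration of \Cref{h-integral-representation}, using the integrability $h\in L^1([s_0,\infty))$ built into \Cref{eq:one point singular:3} to rule out the exponentially growing homogeneous mode $C e^{(n-2)s}$ of the linearised asymptotic equation. Once this is established, the rest of the proof reduces to ODE continuation and routine invocation of the preceding lemmas.
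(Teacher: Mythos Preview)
Your approach is essentially the same as the paper's: take the local solution from Lemma~\ref{lemma:local existence of h}, extend it leftward by ODE continuation, and use the a~priori bound of Lemma~\ref{h-bded-lem} to preclude finite-time breakdown. The paper gives only a sketch (referring to \cite{Hui23existe} for details) and in fact argues continuation somewhat loosely, claiming that if the maximal left endpoint $b_2$ were finite then $h(r_i)\to 0$ or $h(r_i)\to\infty$ along some sequence, ``contradicting'' \eqref{h-bd-11}; your formulation as a coupled $C^1$ system in $(\widetilde{w},h)$ with explicit two-sided bounds on $\widetilde{w}$ is the cleaner way to justify this step.

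On uniqueness you go further than the paper, which does not spell out why an arbitrary solution satisfying \eqref{h-bd-11} on $\mathbb{R}$ must also satisfy the decay \eqref{eq:bound h} needed to invoke the local uniqueness of Lemma~\ref{lemma:local existence of h}. Your bootstrap is the right idea and does work: since $mC_1<n-2$, iterating your displayed inequality first gives $h(s)\to 0$, and then plugging $h^2\le\varepsilon h$ back into \eqref{h-integral-representation} yields $h(s)\le A'e^{-C_2 s}$; one more substitution shows the leading constant is $\frac{\beta'C_1\eta_\infty^{1-m}}{n-2+C_2}<C_3$, so \eqref{eq:bound h} holds on some $(b_1,\infty)$ and Lemma~\ref{lemma:local existence of h} applies. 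The obstacle you flag is therefore surmountable along exactly the lines you indicate.
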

\begin{proof}
Since the proof of the theorem is similar to the proof of Theorem 2.2 of \cite{Hui23existe}, we will only sketch the argument here. By Lemma \ref{lemma:local existence of h} there exists a constant $b_1>0$ such that the equation \Cref{eq:h_s} has a unique solution $h$ in $(b_1,\infty)$ with $\widetilde{w}$ satisfying \eqref{eq:one point singular:3} in $(b_1,\infty)$ which satisfies \eqref{eq:bound h} in $(b_1,\infty)$. Let $(b_2,\infty)$ be the maximal interval of existence of solution of \Cref{eq:h_s} with $\widetilde{w}$ satisfying \eqref{eq:one point singular:3} in $(b_2,\infty)$. By Lemma \ref{h-bded-lem} \eqref{h-bd-11} holds. Suppose $b_2>-\infty$. Then   there exists a 
sequence $\{r_i\}_{i=1}^{\infty}\subset (b_2,\infty)$, $r_i\to b_2$ as $i\to\infty$,  such that
\begin{equation*}
h(r_i)\to 0\quad\mbox{ as }i\to\infty
\end{equation*} 
or
\begin{equation*}
h(r_i)\to\infty\quad\mbox{ as }i\to\infty
\end{equation*}
which contradicts \eqref{h-bd-11}. Hence $b_2=-\infty$ and the theorem follows.
\end{proof}

By Lemma \ref{h-bded-lem}, Theorem \ref{theorem:global existence of h} and the proof of Lemma \ref{h-bded-lem} we have the following result.

\begin{corollary}\label{cor:f-existence-infty}
Let $n \ge 3$, $0<m<\frac{n-2}{n}$, $\eta_{\infty}>0$ and $\alpha$, $\beta$ and $\rho_1$ satisfy \eqref{eq:alpha beta relation3}. Then there exists  a unique solution $f$ of \eqref{eq:f-ODE} in $(0,\infty)$
satisfying
\begin{equation}\label{f-infinty-limit-behaviour}
\lim_{r \to \infty }r^{\frac{n-2}{m}}f(r)=\eta_\infty
\end{equation}
and
\begin{equation}\label{f-origin-limit-behaviour}
\lim_{r \to 0}r^{\alpha/\beta}f(r)=D^{-1}(\eta_\infty)
\end{equation}
for some constant $D^{-1}(\eta_\infty)>0$ with
\begin{equation}\label{eq:f-fr-bd}
-\frac{n-2}{m}<\frac{rf_r(r)}{f(r)} <-\frac{\alpha}{\beta}\quad\forall r>0.
\end{equation} 
\end{corollary}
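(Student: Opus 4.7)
The plan is to reverse the chain of substitutions performed in Method~2 of Lemma \ref{h-bded-lem} in order to produce $f$ from the globally-defined function $h$ given by Theorem \ref{theorem:global existence of h}. Explicitly, I let $h$ be the unique positive solution of \eqref{eq:h_s} on $\mathbb{R}$ furnished by Theorem \ref{theorem:global existence of h} with $\widetilde{w}$ defined on $\mathbb{R}$ by \eqref{eq:one point singular:3}, and I set $w(r):=\widetilde{w}(\log r)$ and $f(r):=r^{-\alpha/\beta}w(r)$ for $r>0$. The ODE identities already verified in the proof of Lemma \ref{h-bded-lem} immediately yield that $f\in C^2((0,\infty))$ is a positive solution of \eqref{eq:f-ODE}.

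The limit \eqref{f-infinty-limit-behaviour} follows directly from \eqref{eq:one point singular:3}: using $\alpha/\beta+C_1=(n-2)/m$ one obtains
\begin{equation*}
r^{\frac{n-2}{m}}f(r)=\eta_\infty\exp\Bigl(-\int_{\log r}^{\infty}h(\rho)\sd\rho\Bigr),
\end{equation*}
and the integral vanishes as $r\to\infty$ by the exponential decay \eqref{eq:bound h}. The derivative bound \eqref{eq:f-fr-bd} comes from differentiating \eqref{eq:one point singular:3} to get $rw_r/w=h(\log r)-C_1$; since Theorem \ref{theorem:global existence of h} gives $0<h<C_1$ on $\mathbb{R}$, we have $rw_r/w\in(-C_1,0)$ and consequently $rf_r/f=rw_r/w-\alpha/\beta\in(-(n-2)/m,-\alpha/\beta)$.

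For the limit \eqref{f-origin-limit-behaviour} I would work with the inverted function $g(\rho):=\rho^{-(n-2)/m}f(\rho^{-1})$ introduced in Method~2 of Lemma \ref{h-bded-lem}. That construction already shows $g\in C^2((0,\infty))$ solves \eqref{eq-fde-inversion}, extends continuously to $\rho=0$ with $g(0)=\eta_\infty$ and $\lim_{\rho\to 0^+}\rho g_\rho(\rho)=0$, and satisfies the monotonicity $(\rho^{C_1}g(\rho))_\rho>0$ coming from Lemma 2.1 of \cite{HK17asympt}. Since $\rho^{C_1}g(\rho)=r^{\alpha/\beta}f(r)$ for $r=\rho^{-1}$, this says $r^{\alpha/\beta}f(r)$ is monotone decreasing in $r$, so the limit $\lim_{r\to 0^+}r^{\alpha/\beta}f(r)$ exists in $(0,\infty]$. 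To obtain finiteness I would extract from \eqref{eq:h_s} the tail asymptotic $0<C_1-h(s)\le C_* e^{\rho_1 s/\beta'}$ as $s\to -\infty$: as $s\to -\infty$ the coefficient $\beta'e^{-\rho_1 s/\beta'}\widetilde{w}^{1-m}$ in \eqref{eq:h_s} blows up exponentially while $\widetilde{w}$ approaches a positive constant, and balancing in the ODE pins $C_1-h$ to an exponentially small tail at rate $\rho_1/\beta'$. Feeding this into $r^{\alpha/\beta}f(r)=\eta_\infty r^{-C_1}\exp(-\int_{\log r}^{\infty}h\sd\rho)$ and rewriting the exponent as $\int_{\log r}^{0}(C_1-h)\sd\rho-\int_0^\infty h\sd\rho$ exhibits the limit as a finite positive constant, which we name $D^{-1}(\eta_\infty)$.

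Uniqueness of $f$ reduces to that of $h$ in Theorem \ref{theorem:global existence of h}: any other radial positive $\tilde f$ solving \eqref{eq:f-ODE} with \eqref{f-infinty-limit-behaviour} and \eqref{eq:f-fr-bd} produces $\tilde h(s):=\bigl(r\tilde f_r/\tilde f+(n-2)/m\bigr)\big|_{r=e^s}$, a positive solution of \eqref{eq:h_s} satisfying \eqref{h-bd-11}; exponential decay of $\tilde h$ at $s=+\infty$ (derivable from the ODE together with $\tilde h\in(0,C_1)$) combined with \eqref{f-infinty-limit-behaviour} places $\tilde h$ in the representation class \eqref{eq:one point singular:3} with the same $\eta_\infty$, so Theorem \ref{theorem:global existence of h} forces $\tilde h=h$ and hence $\tilde f=f$. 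The main obstacle I anticipate is the $s\to -\infty$ tail analysis of \eqref{eq:h_s}: the monotonicity argument by itself only places the limit at the origin in $(0,\infty]$, and closing the gap hinges on a careful linearization of \eqref{eq:h_s} around the value $h=C_1$ as $s\to -\infty$.
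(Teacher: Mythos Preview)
Your construction of $f$ from the global $h$, the verification of \eqref{eq:f-ODE}, the limit \eqref{f-infinty-limit-behaviour} via the identity $r^{(n-2)/m}f(r)=\eta_\infty\exp\bigl(-\int_{\log r}^\infty h\bigr)$, and the derivative bound \eqref{eq:f-fr-bd} from $h\in(0,C_1)$ all mirror the paper's argument. The paper diverges from you only for \eqref{f-origin-limit-behaviour} and for uniqueness: it does not carry out a direct tail analysis of \eqref{eq:h_s} but instead invokes Corollary~2.6 and the proof of Lemma~3.1 of \cite{HK17asympt} (which analyse the inverted equation \eqref{eq-fde-inversion} for $g$) to obtain the origin limit, and cites the uniqueness results of \cite{HK17asympt} outright.

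Your self-contained route for \eqref{f-origin-limit-behaviour} contains a circular step. You write that as $s\to-\infty$ the coefficient $\beta'e^{-\rho_1 s/\beta'}\widetilde{w}^{1-m}$ blows up ``while $\widetilde{w}$ approaches a positive constant''. But $\widetilde{w}(s)=w(e^s)=r^{\alpha/\beta}f(r)$ with $r=e^s$, so the assertion $\lim_{s\to-\infty}\widetilde{w}(s)\in(0,\infty)$ is exactly the finiteness of the origin limit you are trying to prove; the monotonicity you already have only gives $\widetilde{w}(s)\nearrow L\in(0,\infty]$ as $s\to-\infty$. The lower bound $\widetilde{w}\ge\widetilde{w}(0)$ does suffice to make the coefficient blow up, but the heuristic balance $C_1-h\sim|A|/P(s)$ is not yet a proof, and if $L=\infty$ the quantities $P(s)$ and $\widetilde{w}$ are coupled in a way your sketch does not resolve. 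You rightly flag this as the main obstacle; closing it is precisely why the paper defers to the existing analysis of $g$ in \cite{HK17asympt}. Your uniqueness reduction to Theorem~\ref{theorem:global existence of h} is reasonable in spirit, though note that the uniqueness there (traced back to Lemma~\ref{lemma:local existence of h}) is for solutions satisfying the exponential decay \eqref{eq:bound h}, so you would still need to upgrade the integrability $\int^\infty\tilde h<\infty$ (which does follow from \eqref{f-infinty-limit-behaviour}) to that stronger decay before invoking it.
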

\begin{proof}
Let $h$ and $\widetilde{w}$ be given by Theorem \ref{theorem:global existence of h}. Let $w$ and $f$ be given by \eqref{w-defn-w-tilde} and \eqref{f-define-w}. Then by the proof of Lemma \ref{h-bded-lem}  $f$ satisfies  \eqref{eq:f-ODE} in $(0,\infty)$. By \eqref{f-define-w}, we have
\begin{equation}\label{w-wr-f-fr-relation}
\frac{rw_r(r)}{w(r)}=\frac{rf_r(r)}{f(r)}+\frac{\alpha}{\beta}\quad\forall r>0.
\end{equation} 
By \eqref{h-bd-11}, \eqref{h-wr-eqn} and \eqref{w-wr-f-fr-relation} we get \eqref{eq:f-fr-bd}.
By \eqref{eq:one point singular:3}, \eqref{w-defn-w-tilde} and \eqref{f-define-w},
\begin{equation}\label{eq:f-infty-behaviour}
f(r)r^{\frac{n-2}{m}}= \eta_\infty \exp \left( -\int_{\log r}^{\infty} h(\rho) \,\mathrm{d} \rho \right)\quad\forall r>0
\end{equation}
Letting $r\to\infty$ in \eqref{eq:f-infty-behaviour}. Then by Corollary 2.6 and the proof of Lemma 3.1
of \cite{HK17asympt} we get that \eqref{f-origin-limit-behaviour} holds for some constant $D^{-1}(\eta_\infty)>0$.
The uniqueness of such solution also follows from the results of \cite{HK17asympt}
and the corollary follows.
\end{proof}

\begin{remark}
  From the uniqueness, we can now practically identify initial condition constants \(  \eta = D^{-1}_{\eta_{\infty}}  \) and \(  \eta_{\infty} = D(\eta)  \).
\end{remark}
By Corollary \ref{cor:f-existence-infty}, Proposition 2.5 of \cite{HK17asympt} and the same argument as the proof of Lemma 3.1 of \cite{HK17asympt} we get Theorem  \ref{existence-self-similar-soln-thm}.

\section{Asymptotic expansion of singular self-similar solution at the origin}\label{sec:asympt-expans-sing}
In this section we will use a modification of the technique of K.M.~Hui and Sunghoon Kim \cite{HK19singul} to prove the second order asymptotic expansion of  the singular radially symmetric self-simliar solution near the origin. In this section we will let $f$ be the unique radially symmetric solution of \eqref{eq:one point singular:25} satisfying \eqref{eq:one point singular:27} for some $\eta>0$. We let $\rho = r^{\frac{\rho_1}{\beta'}}$ and $\overline{w}(\rho)= w(r)$ for any $r>0$. Then by \eqref{eq:one point singular:4} and a direct computation $\overline{w}(\rho)$ satisfies (cf. (2.1) of \cite{HK19singul})
\begin{equation}
\label{eq:one point singular:26}
\left(  \frac{\overline{w}_{\rho}}{   \overline{w} } \right)_{\rho} + m \left(  \frac{ \overline{w}_{\rho} }{ \overline{w}}  \right)^2 + \frac{a_1 }{\rho} \cdot \frac{  \overline{w} _{\rho} }{ \overline{w}} + \frac{a_2}{ \rho^2 } \cdot \frac{\overline{w}_{\rho}}{ \overline{w}^m} = \frac{a_3}{\rho^2 } \quad \forall \rho>0
\end{equation}
where the constants $a_1, a_2, a_3$, are given by \eqref{eq:one point singular:30}. Note that $a_2<0$ and by \eqref{eq:alpha beta relation3} $a_3>0$. By \eqref{eq:one point singular:27},
\begin{equation}\label{w-bar-at-origin}
\lim_{\rho\to 0^+}\overline{w}(\rho)=\eta.
\end{equation} 
Note that by Theorem \ref{existence-self-similar-soln-thm}, \eqref{f-infty-behaviour} holds for some constant $D(\eta)>0$. Hence by \eqref{eq:alpha beta relation3} and \eqref{f-infty-behaviour},
\begin{equation}\label{w-bar-to-zero-at-infty}
\lim_{\rho\to\infty}\4{w}(\rho)=\lim_{r\to\infty}w(r)=\lim_{r\to\infty}r^{\frac{n-2}{m}}f(r)\cdot r^{\frac{\alpha}{\beta} - \frac{n-2}{m}}= D(\eta)\cdot\lim_{r\to\infty}r^{\frac{\alpha}{\beta}-\frac{n-2}{m}}=0.
\end{equation}

\begin{lemma} \label{lemma:one point singular:1}
Let $n \ge 3$, $0<m < (n-2)/n$, $\eta>0$ and $\alpha$, $\alpha'$, $\beta$, $\beta'$, $\rho_1$, satisfy  \eqref{alpha'-beta'-defn} and \eqref{eq:alpha beta relation3}. Then 
\begin{equation}\label{w-derivative-negative}
\overline{w}_{\rho}(\rho) <0\quad\forall \rho>0.
\end{equation}
\end{lemma}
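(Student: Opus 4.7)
\smallskip

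\textbf{Proof proposal.} The plan is to deduce the monotonicity of $\overline{w}$ directly from the $C^1$ estimate already recorded for $f$ in Corollary \ref{cor:f-existence-infty}, namely
\begin{equation*}
-\frac{n-2}{m}<\frac{r f_r(r)}{f(r)}<-\frac{\alpha}{\beta}\qquad\forall r>0.
\end{equation*}
Since $w(r)=r^{\alpha/\beta}f(r)$, differentiating in $r$ gives
\begin{equation*}
r w_r(r)=r^{\alpha/\beta}f(r)\left(\frac{\alpha}{\beta}+\frac{r f_r(r)}{f(r)}\right),
\end{equation*}
and the right-hand bound above makes the bracketed quantity strictly negative. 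Since $f>0$, this yields $w_r(r)<0$ for every $r>0$. Finally, under the monotone change of variables $\rho=r^{\rho_1/\beta'}$, one has $d\rho/dr=(\rho_1/\beta')r^{\rho_1/\beta'-1}>0$ because $\rho_1,\beta'>0$, so the chain rule gives $\overline{w}_{\rho}(\rho)=w_r(r)/(d\rho/dr)<0$, which is the claim.

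An alternative, more intrinsic approach would work entirely at the level of the ODE \eqref{eq:one point singular:26} together with the boundary data $\overline{w}(0)=\eta>0$ from \eqref{w-bar-at-origin} and $\overline{w}(\rho)\to 0$ as $\rho\to\infty$ from \eqref{w-bar-to-zero-at-infty}. Suppose $\overline{w}_{\rho}(\rho_0)=0$ at some $\rho_0>0$; then every term of \eqref{eq:one point singular:26} containing $\overline{w}_{\rho}$ vanishes at $\rho_0$, and the identity $(\overline{w}_{\rho}/\overline{w})_{\rho}=\overline{w}_{\rho\rho}/\overline{w}-(\overline{w}_{\rho}/\overline{w})^2$ evaluated at $\rho_0$ yields
\begin{equation*}
\overline{w}_{\rho\rho}(\rho_0)=\frac{a_3}{\rho_0^2}\,\overline{w}(\rho_0)>0,
\end{equation*}
since $a_3>0$ by \eqref{eq:one point singular:30} and \eqref{eq:alpha beta relation3}. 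Thus every critical point of $\overline{w}$ on $(0,\infty)$ must be a strict local minimum, which rules out any local maximum. Combined with the boundary behaviour $\overline{w}(0^+)=\eta>0$, $\overline{w}(\infty)=0$, this forces $\overline{w}_{\rho}$ to be single-signed and, by the boundary values, strictly negative.

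I expect no genuine obstacle: the first route is a one-line consequence of an already proved bound, and the second is a standard maximum-principle-type argument on the scalar ODE. Because Corollary \ref{cor:f-existence-infty} is available from the previous section, the shortest proof is the first one, so that is the version I would write in the paper.
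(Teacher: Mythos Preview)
Both routes you outline are correct. Your first argument is genuinely different from the paper's proof and considerably shorter: the paper does not invoke Corollary~\ref{cor:f-existence-infty} here at all, but instead gives a self-contained ODE argument in the $\rho$-variable. Assuming $\overline{w}_\rho(\rho_2)\ge 0$ at some $\rho_2>0$, the paper rewrites \eqref{eq:one point singular:26} in divergence form as $(\rho^{a_1}(\overline{w}^m/m)_\rho)_\rho=-a_2\rho^{a_1-2}\overline{w}_\rho+a_3\rho^{a_1-2}\overline{w}^m$, observes this is strictly positive at $\rho_2$ (since $a_2<0$, $a_3>0$), passes to a maximal interval $(\rho_2,\rho_3)$ on which $\overline{w}_\rho>0$, and then integrates the resulting differential inequality twice (with separate treatment of the cases $a_1\ne1$ and $a_1=1$) to force $\overline{w}\to\infty$ if $\rho_3=\infty$, contradicting \eqref{w-bar-to-zero-at-infty}; if $\rho_3<\infty$, the strict monotonicity of $\rho^{a_1}(\overline{w}^m/m)_\rho$ together with $\overline{w}_\rho(\rho_3)=0$ gives $\overline{w}_\rho<0$ on $(\rho_2,\rho_3)$, again a contradiction. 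Your second approach captures the same mechanism much more cleanly via the single observation that every zero of $\overline{w}_\rho$ is a strict local minimum of $\overline{w}$; this handles both cases at once and avoids the explicit integrations. What your first route buys is a one-line proof at the cost of relying on the Section~2 construction; what the paper's route (and your second one) buys is independence from Corollary~\ref{cor:f-existence-infty}, which is natural if Section~3 is meant to stand as a self-contained extension of the techniques of \cite{HK19singul}.
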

\begin{proof}
Suppose \eqref{w-derivative-negative} does not hold. Then there exist a constant $\rho_2 >0$ such that 
\begin{equation}\label{eq:one point singular:33}
\overline{w}_{\rho}(\rho_2) \ge 0.
\end{equation}
Then by  \eqref{eq:one point singular:30}, \Cref{eq:one point singular:26} and \Cref{eq:one point singular:33} we have
\begin{equation}
\label{eq:one point singular:34}
\left(  \rho^{a_1} \cdot \overline{w}^m \cdot \frac{\overline{w}_{\rho}}{ \overline{w}}  \right)_{\rho} (\rho_2) = -a_2 \rho_2 ^{a_1-2} \overline{w}_{\rho} (\rho_2) + a_3 \rho_2^{a_1 -2} \overline{w}(\rho_2) ^m > 0. 
\end{equation}
Thus there exists a  constant $\delta_1>0$ such that  
\begin{align*}
&\rho^{a_1} \cdot \overline{w}^m \cdot \frac{\overline{w}_{\rho}}{ \overline{w}} (\rho) >0\quad\forall\rho \in (\rho_2,\rho_2 + \delta_1)\notag\\
\Rightarrow\quad&\overline{w}_{\rho}(\rho) >0 \qquad\qquad\quad \forall \rho \in (\rho_2,\rho_2 +\delta_1).
\end{align*}
Let $(\rho_2,\rho_3)$ be the maximal interval such that
\begin{equation}\label{eq:one point singular:37}
\overline{w}_{\rho}(\rho) >0 \quad \forall \rho \in (\rho_2,\rho_3).
\end{equation} 
By \Cref{eq:one point singular:26} and \Cref{eq:one point singular:37},
\begin{equation}\label{eq:one point singular:39}
(\rho^{a_1} (\overline{w}^m/m)_{\rho})_{\rho}(\rho)=\left(\rho^{a_1}\cdot\overline{w}^m\cdot\frac{\overline{w}_{\rho} }{ \overline{w}}  \right)_{\rho} (\rho) \ge a_3 \rho^{a_1-2} \overline{w}(\rho)^m  >0 \quad \forall \rho \in (\rho_2,\rho_3).
\end{equation}
Integrating \eqref{eq:one point singular:39} over $(\rho_2,\rho)$, $\rho_2<\rho<\rho_3$,  
\begin{equation}\label{w-bar-ineqn1} 
\rho_2^{a_1} (\overline{w}^m/m)_{\rho}(\rho_2)
\le\left\{\begin{aligned}
&\rho^{a_1}(\overline{w}^m/m)_{\rho} (\rho)+\frac{a_3 \overline{w}(\rho_2)^m }{ 1-a_1 } ( \rho^{a_1-1} -\rho_2 ^{ a_1-1} ) \quad \forall \rho_2 < \rho < \rho_3\quad\mbox{ if }a_1\ne 1\\
&\rho^{a_1}(\overline{w}^m/m)_{\rho} (\rho)+a_3 \overline{w}(\rho_2)^m\log\left(\frac{\rho_2}{\rho}\right)\qquad\quad \forall \rho_2 < \rho < \rho_3\quad\mbox{ if }a_1=1.
\end{aligned}\right.
\end{equation}
Dividing  \eqref{w-bar-ineqn1} by $\rho^{a_1}$ and integrating over $(\rho_2,\rho)$, $\rho_2<\rho<\rho_3$, 
 \begin{equation}\label{eq:one point singular:40} 
\overline{w}(\rho)^m\ge\left\{\begin{aligned}
&\overline{w}(\rho_2)^m + E_1 (\rho^{1-a_1} -\rho_2^{1-a_1})  +  \frac{a_3 m\overline{w}(\rho_2)^m }{ 1-a_1 } \log \left( \frac{\rho_2}{\rho} \right) \qquad\quad \forall \rho_2 < \rho < \rho_3\quad\mbox{ if }a_1\ne 1 \\
&\overline{w}(\rho_2)^m + E_2\log \left( \frac{\rho}{\rho_2} \right)+a_3m\4{w}(\rho_2)^m\left[(\log\rho)^2-(\log\rho_2)^2\right]\quad \forall \rho_2 < \rho < \rho_3\quad\mbox{ if }a_1=1
\end{aligned}\right.
 \end{equation}
 where
\begin{equation}\label{E1-defn} 
E_1=\frac{m}{1-a_1}\left(\rho_2^{a_1}(\overline{w}^m/m)_{\rho} (\rho_2)+\frac{a_3 \rho_2 ^{a_1-1}\overline{w}(\rho_2) ^m}{1-a_1}\right)\ge \frac{a_3m \rho_2 ^{a_1-1}\overline{w} (\rho_2) ^m}{(1-a_1)^2}>0\quad\mbox{ if }a_1<1 
\end{equation}
 and
 \begin{equation*}
 E_2=\rho_2m(\overline{w}^m/m)_{\rho}(\rho_2)-a_3m\overline{w}(\rho_2)^m\log\rho_2.
 \end{equation*}
 If $\rho_3 = \infty$, then by letting $\rho \to \infty$ in \Cref{eq:one point singular:40}, by \eqref{E1-defn} we get
 \begin{equation*}
  \overline{w}(\rho) \to \infty \quad \text{as} \quad \rho \to \infty.
 \end{equation*}
which contradicts \eqref{w-bar-to-zero-at-infty}. Hence $\rho_3<\infty$  and
\begin{equation}\label{eq:one point singular:38}
\overline{w}_{\rho}(\rho_3)=0.
\end{equation}
By \Cref{eq:one point singular:39} and \Cref{eq:one point singular:38},
\begin{equation}\label{eq:one point singular:41}
\overline{w}_{\rho}(\rho) <0  \quad  \forall \rho \in (\rho_2,\rho_3)
\end{equation}
which contradicts  \Cref{eq:one point singular:37}. Hence no such $\rho_2 >0$ exists and the lemma follows.

\end{proof}

\begin{lemma}\label{lm:lemma 3.2}\label{w-rho-derivative-limit-at-origin-lem}
Let $n \ge 3$, $0<m < (n-2)/n$, $\eta>0$ and $\alpha$, $\alpha'$, $\beta$, $\beta'$, $\rho_1$, satisfy  \eqref{alpha'-beta'-defn} and \eqref{eq:alpha beta relation3}.  Then 
\begin{equation}\label{derivative of barw at 0}
\lim_{\rho \to 0^+} \overline{w}_{\rho} (\rho) = \frac{a_3}{a_2}\eta^m = \frac{m \alpha^2-\alpha\beta(n-2)}{ \beta^2 \rho_1}  \eta^m
\end{equation}
and 
\begin{align}\label{f-derivative-at-origin}
\lim_{r \to 0^+} r^{\frac{\alpha}{\beta} + 1 } f'(r) = -\frac{\alpha}{\beta} \eta 
\end{align}
where $a_1,\ a_2 $ and $a_3$  are constants given by \Cref{eq:one point singular:30}. Hence $\overline{w}$ can be extended to a function in  \(  C^1([0,\infty)  \) by letting  \(  \overline{w}(0) = \eta  \) and  \(\overline{w}_{\rho}(0) = \frac{a_3}{a_2} \eta^m  \).
\end{lemma}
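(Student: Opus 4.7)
My plan is to recast \eqref{eq:one point singular:26} as a \emph{linear} first-order ODE for $F(\rho):=(\overline{w}^m/m)_\rho(\rho)=\overline{w}^{m-1}(\rho)\overline{w}_\rho(\rho)$ and extract the limit of $\overline{w}_\rho$ at $\rho=0$ via an integrating factor combined with L'H\^opital's rule. Multiplying \eqref{eq:one point singular:26} by $\rho^2\overline{w}^m$ and using $\overline{w}_\rho=\overline{w}^{1-m}F$ gives
\[
\rho^2 F_\rho+a_1\rho\,F+a_2\overline{w}^{1-m}(\rho)F=a_3\overline{w}^m(\rho),\qquad\rho>0,
\]
a linear ODE with integrating factor $\nu(\rho)=\rho^{a_1}\mu(\rho)$, where
\[
\mu(\rho):=\exp\!\left(\int_{1}^{\rho}\frac{a_2\overline{w}^{1-m}(s)}{s^2}\,ds\right).
\]
Since $a_2<0$ and $\overline{w}(s)\to\eta>0$ as $s\to 0^+$ by \eqref{w-bar-at-origin}, the exponent tends to $+\infty$ as $\rho\to 0^+$, so $\mu(\rho)$ blows up exponentially; in particular $\rho^{a_1}\mu(\rho)\to+\infty$ irrespective of the sign of $a_1$.

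Variation of parameters yields $\rho^{a_1}\mu(\rho)F(\rho)=F(1)-a_3\int_\rho^{1}\mu(s)\,s^{a_1-2}\overline{w}^m(s)\,ds$, hence
\[
\overline{w}^{m-1}(\rho)\overline{w}_\rho(\rho)=F(\rho)=\frac{F(1)}{\rho^{a_1}\mu(\rho)}-a_3\cdot\frac{\int_\rho^{1}\mu(s)\,s^{a_1-2}\overline{w}^m(s)\,ds}{\rho^{a_1}\mu(\rho)}.
\]
The first term vanishes as $\rho\to 0^+$ by the exponential growth of $\mu$. The integrand in the second term behaves like $\eta^m s^{a_1-2}\exp\bigl((-a_2\eta^{1-m})/s\bigr)$ near $s=0$ and is therefore non-integrable, so both numerator and denominator diverge to $+\infty$; the $\infty/\infty$ form of L'H\^opital's rule applies and the ratio of derivatives equals
\[
\frac{-\mu(\rho)\rho^{a_1-2}\overline{w}^m(\rho)}{\mu(\rho)\rho^{a_1-2}\bigl(a_1\rho+a_2\overline{w}^{1-m}(\rho)\bigr)}=\frac{-\overline{w}^m(\rho)}{a_1\rho+a_2\overline{w}^{1-m}(\rho)}\longrightarrow-\frac{\eta^{2m-1}}{a_2}.
\]
Hence $F(\rho)\to a_3\eta^{2m-1}/a_2$, and dividing by $\overline{w}^{m-1}(\rho)\to\eta^{m-1}$ establishes \eqref{derivative of barw at 0}.

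The second claim \eqref{f-derivative-at-origin} follows by the chain rule: from $f(r)=r^{-\alpha/\beta}\overline{w}(r^{\rho_1/\beta'})$ I obtain
\[
r^{\alpha/\beta+1}f'(r)=-\frac{\alpha}{\beta}\overline{w}(\rho)+\frac{\rho_1}{\beta'}\,\rho\,\overline{w}_\rho(\rho),\qquad\rho=r^{\rho_1/\beta'},
\]
and as $r\to 0^+$ both $\overline{w}(\rho)\to\eta$ and $\rho\overline{w}_\rho(\rho)\to 0$, the latter because $\overline{w}_\rho$ now has a finite limit. The $C^1$-extension of $\overline{w}$ to $[0,\infty)$ then follows from continuity. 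The main technical obstacle is the rigorous application of L'H\^opital's rule --- specifically, confirming that the denominator $\rho^{a_1}\mu(\rho)$ has non-vanishing derivative near $\rho=0$; this is immediate since its logarithmic derivative $a_1/\rho+a_2\overline{w}^{1-m}/\rho^2$ is dominated by the nonzero term $a_2\eta^{1-m}/\rho^2$.
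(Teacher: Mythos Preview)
Your proof is correct and takes a genuinely different route from the paper. The paper works with $q(\rho)=1/\overline{w}_\rho(\rho)$ (which requires Lemma~\ref{lemma:one point singular:1} to ensure $\overline{w}_\rho\neq 0$), derives a nonlinear ODE for $q$, and then devotes most of the argument to establishing two-sided a~priori bounds $\frac{2^{1+m}a_2}{a_3}\eta^{-m}\le q(\rho)\le\frac{2^{m-1}a_2}{a_3}\eta^{-m}$ near $\rho=0$ via delicate contradiction arguments; only after these bounds are secured does it invoke a L'H\^opital-type step from \cite{HK19singul}. You instead observe that the equation is exactly \emph{linear} in $F=(\overline{w}^m/m)_\rho$ once $\overline{w}$ is treated as a known coefficient --- the quadratic $(\overline{w}_\rho/\overline{w})^2$ contributions cancel --- and the integrating-factor representation together with a single application of L'H\^opital gives the limit directly. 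Your approach is shorter, avoids the preparatory trapping bounds entirely, and does not rely on Lemma~\ref{lemma:one point singular:1}; it uses only $\overline{w}(\rho)\to\eta$ and $a_2<0$. The paper's approach, by contrast, yields intermediate bounds on $\overline{w}_\rho$ that may be of independent use and adapts more readily to equations where such a linearization is unavailable.
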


\begin{proof} We will use a modification of the proof of Lemma 2.2 of \cite{HK19singul} to prove the lemma. Note that the main difference between the case in \cite{HK19singul} and our case is that  $a_2$ is positive in \cite{HK19singul} while $a_2$ is negative in this paper. Let \begin{equation*}
q(\rho)= \frac{1}{\overline{w}_{\rho}(\rho)}\quad\forall\rho>0. 
\end{equation*}
Then by (2.15) of \cite{HK19singul} $q$ satisfies
\begin{align} 
q_{\rho} & = - \frac{1-m}{\overline{w}(\rho)} + \frac{q(\rho)}{ \rho} \left[  a_1 + \frac{ \overline{w}(\rho)}{ \rho} \left(\frac{a_2}{ \overline{w}(\rho)^m}- a_3 q(\rho) \right) \right]\quad\forall\rho>0. \label{eq:one point singular:31} 
\end{align}
By \eqref{w-bar-at-origin} and Lemma \ref{lemma:one point singular:1} there exists a constant $\rho_2>0$ such that 
 \begin{equation}\label{eq:one point singular:42}
 \frac{\eta}{2}< \overline{w}(\rho) < \eta  \quad \forall 0<\rho < \rho_2.
 \end{equation}
 We claim that there exists a constant $\rho_0>0$ such that
 \begin{equation}\label{eq:one point singular:32}
  \frac{2^{1+m} a_2}{ a_3} \eta^{-m} \le q(\rho) \le \frac{2^{m-1} a_2}{ a_3} \eta^{-m}\quad \forall 0 < \rho < \rho_0.
 \end{equation}
Let 
\begin{equation}\label{defn-rho0}
\overline{\rho}_0 =\min \left\{ \rho_2, \frac{|a_2|\eta^{1-m}}{8|a_1| +1} \right\}. 
\end{equation} 
Suppose the left hand side inequality of \eqref{eq:one point singular:32} does not hold. Then there exists a constant $\rho_3\in (0,\overline{\rho}_{0})$ such that 
\begin{equation*}
q(\rho_3)< \frac{2^{1+m} a_2}{ a_3} \eta^{-m}.
\end{equation*}
By continuity of $q(\rho)$ on $(0,\infty)$, there exists a maximal interval $(\rho_4,\rho_3) , \; 0\le \rho_4<\rho_3$ such that  
\begin{equation}
\label{eq:one point singular:44}
q(\rho)< \frac{2^{1+m} a_2}{ a_3} \eta^{-m}\quad \forall \rho \in (\rho_4,\rho_3).
\end{equation} 
By \Cref{eq:one point singular:31}, \Cref{eq:one point singular:42}, \eqref{defn-rho0}, \Cref{eq:one point singular:44} and Lemma \ref{lemma:one point singular:1} we get
\begin{align}\label{eq:one point singular:43} 
q_{\rho}(\rho)
\le&\frac{q(\rho)\overline{w}(\rho)^{1-m}}{\rho^2} \left(a_2- \frac{a_3\ol{w}(\rho)^m}{2}q(\rho) \right) -\frac{a_{3}\ol{w}(\rho)}{2\rho^2}q(\rho)^2\notag\\
\le&\frac{q(\rho)\overline{w}(\rho)^{1-m}}{\rho^2} \left(a_2- \frac{a_3}{2}\cdot\left(\frac{\eta}{2}\right)^2\cdot\frac{2^{1+m} a_2}{ a_3} \eta^{-m} \right) -\frac{a_{3}\eta }{4\rho^2}q(\rho)^2\notag\\
=&-\frac{a_{3}\eta }{4\rho^2}q(\rho)^2
<0\qquad\qquad\quad\forall\rho\in(\rho_4,\rho_3).
\end{align}
Dividing \eqref{eq:one point singular:43}  by $q(\rho)^2$  and  integrating over $(\rho,\rho_3)$, 
$\rho_4<\rho<\rho_3$,
 \begin{align}\label{w-bar-ineqn2}
\overline{w}_{\rho}(\rho)\le \overline{w}_{\rho}(\rho_3) + \frac{ a_3 \eta}{4} \left( \frac{1}{\rho_3} - \frac{1}{\rho} \right)\quad\forall\rho_4<\rho<\rho_3.
 \end{align}
Integrating \eqref{w-bar-ineqn2} over $(\rho,\rho_3)$, $\rho_4<\rho<\rho_3$,
 \begin{align}\label{eq:one point singular:46} 
\overline{w}(\rho) \ge \overline{w}(\rho_3) - \overline{w}_{\rho}(\rho_3)(\rho_3-\rho) +  \frac{ a_3 \eta}{4}\log \left( \frac{\rho_3}{\rho} \right) -\frac{ a_3 \eta(\rho_3-\rho)}{4\rho_3}\quad\forall\rho_4<\rho<\rho_3.
 \end{align}
If $\rho_4 = 0$, then by letting $\rho\to 0 $ in  \Cref{eq:one point singular:46} we get 
\begin{equation}\label{w-bar-origin=0}
\lim_{\rho \to 0^+} \overline{w}(\rho) = \infty
\end{equation}
 which contradicts \Cref{w-bar-at-origin}.
Hence $\rho_4 > 0$ and  
\begin{equation}
\label{eq:one point singular:45}
q(\rho_4)=\frac{2^{1+m} a_2}{ a_3} \eta^{-m}.
\end{equation} 
Also by continuity of $q_{\rho}$ and $q$, \eqref{eq:one point singular:43} holds for $\rho=\rho_4$. Hence
\begin{equation}\label{q-derivative-ineqn1} 
q_{\rho}(\rho_4)\le-\frac{a_{3}\eta }{4\rho^2}q(\rho_4)^2<0
\end{equation}
By \eqref{eq:one point singular:45} and \eqref{q-derivative-ineqn1} there exists a constant $\delta_1\in (0,\rho_4)$ such that
\begin{equation}\label{q-lower-bd-inqen}
q(\rho)>\frac{2^{1+m} a_2}{ a_3} \eta^{-m}\quad\forall \rho_4-\delta_1\le\rho<\rho_4.
\end{equation}
If there exists $\rho_5\in (0,\rho_4-\delta_1)$ such that
\begin{equation}\label{q-lower-bd-inqen2}
q(\rho)<\frac{2^{1+m}a_2}{ a_3} \eta^{-m},
\end{equation}
then there exists a constant $\rho_6\in (\rho_5,\rho_4-\delta_1)$ such that
\begin{align}\label{q-lower-bd-inqen3}
&q(\rho)<\frac{2^{1+m} a_2}{ a_3} \eta^{-m}\quad\forall \rho_5\le\rho<\rho_6\quad\mbox{ and }\quad q(\rho_6)=\frac{2 a_2}{ a_3} \eta^{-m}\notag\\
\Rightarrow\quad&q_{\rho}(\rho_6)\ge 0.
\end{align}
On the other hand by the same argument as the proof of \eqref{eq:one point singular:43} we get
\begin{equation*} 
q_{\rho}(\rho_6)\le-\frac{a_{3}\eta }{4\rho^2}q(\rho_6)^2<0
\end{equation*}
which contradicts \eqref{q-lower-bd-inqen3}. Hence no such constant $\rho_5$ exists and
\begin{equation}\label{q-lower-bd-inqen4}
q(\rho)>\frac{2^{1+m} a_2}{ a_3} \eta^{-m}\quad\forall 0<\rho<\rho_4.
\end{equation}
We will now proof the right hand side inequality of \eqref{eq:one point singular:32}.
Without loss of generality we may assume that there exists a constant $\rho_7\in (0,\ol{\rho}_0)$ such that
\begin{equation}\label{q-lower-bd10}
q(\rho_7)>\frac{2^{m-1}a_2}{a_3} \eta^{-m}.
\end{equation} 
By \eqref{eq:one point singular:42} and \eqref{q-lower-bd10},
\begin{equation}\label{w-bar-q-lower-bd}
\ol{w}(\rho_7)q(\rho_7)>\frac{2^{m-2}a_2}{a_3} \eta^{1-m}.
\end{equation}
Then there exists a maximal interval $(\rho_8,\rho_7)$, $0\le\rho_8<\rho_7$,  such that 
\begin{equation}\label{q-lower-bd5}
\ol{w}(\rho)q(\rho)>\frac{2^{m-2}a_2}{a_3}\eta^{1-m}\quad\forall\rho_8<\rho<\rho_7.
\end{equation} 
By \Cref{eq:one point singular:31}, \eqref{eq:one point singular:42}, \eqref{defn-rho0}, \eqref{q-lower-bd5} and Lemma \ref {lemma:one point singular:1}
 \begin{align}\label{eq:one point singular:49} 
 (\ol{w}q)_{\rho}(\rho)=&m + \frac{\ol{w}(\rho)q(\rho)}{\rho} \left[ \left(  a_1 + \frac{ a_2 \overline{w}(\rho)^{1-m} }{ 4 \rho}\right) + \frac{3}{4\rho} (a_2 \overline{w}(\rho)^{1-m} -2a_3 \overline{w}(\rho)q(\rho)  ) + \frac{a_3}{2\rho} \overline{w}(\rho)q(\rho) \right]\notag\\
\ge&\frac{\ol{w}(\rho)\ol{q}(\rho)}{\rho} \left[  \left(a_1 + \frac{a_2}{4\rho}(\eta/2)^{1-m}\right)  +  \frac{3}{4\rho}   \left( a_2(\eta/2)^{1-m} - 2 a_3 2^{m-2} \frac{a_2}{a_3}\eta^{1-m}  \right)\right.\notag\\
&\qquad +\left. \frac{a_3}{2\rho}\ol{w}(\rho)q(\rho)   \right] \nonumber\\
\ge&\frac{a_3}{2\rho^2}\ol{w}(\rho)^2q(\rho)^2 >0\quad\forall\rho_8<\rho<\rho_7.
 \end{align}
Dividing \eqref{eq:one point singular:49} by $w(\rho)^2q(\rho)^2$ and integrating over $(\rho,\rho_7)$, $\rho_8<\rho<\rho_7$, we get
\begin{equation}\label{w-bar-w-ineqn}
\frac{\ol{w}_{\rho}(\rho)}{\ol{w}(\rho)}=\frac{1}{\ol{w}(\rho)q(\rho)}\ge\frac{1}{\ol{w}(\rho_7)q(\rho_7)}+\frac{a_3}{2}\left(\frac{1}{\rho}-\frac{1}{\rho_7}\right)\quad\forall \rho_8<\rho<\rho_7.
\end{equation}
Integrating \eqref{w-bar-w-ineqn} over $(\rho,\rho_7)$, $\rho_8<\rho<\rho_7$, we get
 \begin{align}\label{eq:one point singular:50} 
\log \ol{w}(\rho_7) - \log \ol{w}(\rho) \ge \frac{1}{\ol{w}(\rho_7) q(\rho_7)} (\rho_7-\rho) + \frac{a_3}{2}\log \left( \frac{\rho_7}{\rho} \right) -\frac{a_3}{2 \rho_7} \left(  \rho_7-\rho \right)\quad\forall \rho_8<\rho<\rho_7.
 \end{align}
 If $\rho_8=0$, then by letting $\rho\to 0$ in \eqref{eq:one point singular:50} we get
\eqref{w-bar-origin=0} which contradicts \eqref{w-bar-at-origin}.
Hence $\rho_8>0$ and
\begin{equation}\label{q-rho8-value}
\ol{w}(\rho_8)q(\rho_8)=\frac{2^{m-2}a_2}{a_3}\eta^{1-m}.
\end{equation}
By \eqref{q-rho8-value} and the same argument as the proof of \eqref{eq:one point singular:49} we get
\begin{equation*}
 (\ol{w}q)_{\rho}(\rho_8)\ge\frac{a_3}{2\rho_8^2}\ol{w}(\rho_8)^2q(\rho_8)^2 >0.
 \end{equation*}
Hence there exists a constant $0<\delta_1<\rho_8$ such that
\begin{equation}\label{bar-w-q-upper-bd1}
\ol{w}(\rho_8)q(\rho)<\frac{2^{m-2}a_2}{a_3}\eta^{1-m}\quad\forall\rho_8-\delta_1\le\rho<\rho_8.
\end{equation}
Suppose there exists a constant $0<\rho_9<\rho_8-\delta_1$ such that
\begin{equation*}
\ol{w}(\rho_9)q(\rho_9)>\frac{2^{m-2}a_2}{a_3} \eta^{1-m}.
\end{equation*}
Then there exists a constant $\rho_{10}\in (\rho_9,\rho_8-\delta_1)$ such that
\begin{align}
&\ol{w}(\rho)q(\rho)>\frac{2^{m-2}a_2}{a_3}\eta^{1-m}\quad\forall\rho_9<\rho<\rho_{10}\quad\mbox{ and }\quad
\ol{w}(\rho_{10})q(\rho_{10})=\frac{2^{m-2}a_2}{a_3}\eta^{1-m}\label{w-bar-q-lower-bd12}\\
\Rightarrow\quad&(\ol{w}q)_{\rho}(\rho_{10})\le 0.\label{w-bar-q-lower-bd13}
\end{align} 
On the other hand by \eqref{w-bar-q-lower-bd12} and the same argument as the proof of \eqref{eq:one point singular:49} ,
 \begin{equation*}
(\ol{w}q)_{\rho}(\rho_{10})\ge\frac{a_3}{2\rho_{10}^2}\ol{w}(\rho_{10})^2q(\rho_{10})^2 >0
 \end{equation*}
which contradicts \eqref{w-bar-q-lower-bd13}. Hence no such constant $\rho_9$ exists and
\begin{equation}\label{w-bar-q-lower-bd20}
\ol{w}(\rho)q(\rho)\le\frac{2^{m-2}a_2}{a_3} \eta^{1-m}\quad\forall 0<\rho<\rho_8.
\end{equation}
By \eqref{eq:one point singular:42} and \eqref{w-bar-q-lower-bd20},
\begin{equation}\label{w-bar-q-lower-bd22}
q(\rho)\le\frac{2^{m-1}a_2}{a_3} \eta^{-m}\quad\forall 0<\rho<\rho_8.
\end{equation}
Let $\rho_0=\min (\rho_4,\rho_8)$. Then by \eqref{q-lower-bd-inqen4} and \eqref{w-bar-q-lower-bd22}, \eqref{eq:one point singular:32} holds. Then by \eqref{eq:one point singular:32} and an argument similar to the proof of [Lemma 2.2, \cite{HK19singul}], which is essentially L'Hospital rule,
 we get
  \begin{equation*}
  \lim_{\rho \to 0^+} q(\rho) = \frac{a_2}{a_3} \eta^{-m}.
  \end{equation*}
and \eqref{derivative of barw at 0} follows. Finally by \eqref{eq:one point singular:27}, \eqref{derivative of barw at 0} and the same argument as the proof of Lemma 2.2 of \cite{HK19singul} we get \eqref{f-derivative-at-origin} and the lemma follows.
\end{proof}  

By \eqref{w-bar-at-origin}, Lemma \ref{w-rho-derivative-limit-at-origin-lem} and an argument similar to the proof of Lemma 2.3 of \cite{HK19singul} we have the following result.

\begin{lemma}\label{w-rho-2nd-derivative-limit-at-origin-lem}
Let $n \ge 3$, $0<m < (n-2)/n$, $\eta>0$ and $\alpha$, $\alpha'$, $\beta$, $\beta'$, $\rho_1$, satisfy  \eqref{alpha'-beta'-defn} and \eqref{eq:alpha beta relation3}. Then 
     \begin{align*}
    \lim_{\rho \to 0^+} \overline{w}_{\rho \rho } (\rho) = \frac{a_3(ma_3-a_1)}{a_2^2} \eta^{2m-1} 
    \end{align*}
    where $a_1,\ a_2 $ and $a_3$  are constants given by \Cref{eq:one point singular:30}. Hence $\overline{w}$ can be extended to a function in  $C^2([0,\infty))$ by defining $\overline{w}(0)$, $\overline{w}_{\rho}(0)$ and $\overline{w}_{\rho\rho}(0)$ by \eqref{eq:one point singular:28}. 
  \end{lemma}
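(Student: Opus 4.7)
The approach adapts the argument of Lemma 2.3 in \cite{HK19singul} to the present sign convention ($a_2<0$ here, in contrast to $a_2>0$ there). I would first isolate $\overline{w}_{\rho\rho}$ in \eqref{eq:one point singular:26}: multiplying by $\overline{w}$ and applying the identity $(\overline{w}_\rho/\overline{w})_\rho=\overline{w}_{\rho\rho}/\overline{w}-(\overline{w}_\rho/\overline{w})^2$ yields, for $\rho>0$,
\begin{equation*}
\overline{w}_{\rho\rho}(\rho)=\frac{H(\rho)}{\rho^2}+(1-m)\frac{\overline{w}_\rho(\rho)^2}{\overline{w}(\rho)},\qquad H(\rho):=a_3\overline{w}-a_2\overline{w}^{1-m}\overline{w}_\rho-a_1\rho\,\overline{w}_\rho.
\end{equation*}
By Lemma \ref{lm:lemma 3.2} together with \eqref{w-bar-at-origin}, the second term on the right tends to $(1-m)a_3^2\eta^{2m-1}/a_2^2$ as $\rho\to 0^+$, and a direct substitution using $\overline{w}(0^+)=\eta$ and $\overline{w}_\rho(0^+)=(a_3/a_2)\eta^m$ gives $H(0^+)=0$.

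To determine $v(\rho):=H(\rho)/\rho^2$ at $\rho=0^+$, I would differentiate $H$ and use the displayed equation for $\overline{w}_{\rho\rho}$ to eliminate the second derivative; after rearrangement this produces the linear first-order ODE
\begin{equation*}
\rho^2 v_\rho(\rho)+\bigl(2\rho+a_2\overline{w}(\rho)^{1-m}+a_1\rho\bigr)v(\rho)=K(\rho),
\end{equation*}
where
\[
K(\rho):=(a_3-a_1)\overline{w}_\rho-a_2(1-m)\overline{w}^{-m}\overline{w}_\rho^2-(1-m)(a_2\overline{w}^{1-m}+a_1\rho)\overline{w}_\rho^2/\overline{w}
\]
is continuous at $0$ by Lemma \ref{lm:lemma 3.2}. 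Because the coefficient of $v$ at $\rho=0$ equals $a_2\eta^{1-m}\neq 0$, once the limit $v(0^+)$ is known to exist, evaluation at $\rho=0$ forces
\[
v(0^+)=\frac{K(0)}{a_2\eta^{1-m}}=\frac{a_3\bigl[(2m-1)a_3-a_1\bigr]}{a_2^2}\eta^{2m-1},
\]
and combining with the first display delivers $\overline{w}_{\rho\rho}(0^+)=\frac{a_3(ma_3-a_1)}{a_2^2}\eta^{2m-1}$, exactly the value in \eqref{eq:one point singular:28}. The $C^2$ extension then follows since $\overline{w}\in C^\infty((0,\infty))$ by the ODE.

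The principal obstacle is proving that $v$ (equivalently $\overline{w}_{\rho\rho}$) has a finite limit as $\rho\to 0^+$. I anticipate resolving this by a trapping argument parallel to the one used in Lemma \ref{lm:lemma 3.2}: introduce the auxiliary function $P(\rho):=q(\rho)-(a_2/a_3)\overline{w}(\rho)^{-m}$, for which $P(0^+)=0$ by \eqref{eq:one point singular:32} and Lemma \ref{lm:lemma 3.2}; derive its ODE from \eqref{eq:one point singular:31}; and then establish two-sided near-origin bounds of the form $c_1\rho\le -P(\rho)\le c_2\rho$ by a continuity/contradiction argument analogous to the one yielding \eqref{eq:one point singular:32}. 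Since $a_2<0$ in our setting, opposite to \cite{HK19singul}, the directions of the trapping inequalities must be reversed throughout, but the structural argument goes through; once boundedness of $\overline{w}_{\rho\rho}$ near $0$ is secured, the explicit value is forced by the linear identity above.
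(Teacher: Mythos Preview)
Your derivation of the identity $\overline{w}_{\rho\rho}=(1-m)\overline{w}_\rho^2/\overline{w}+v$ with $v=H/\rho^2$, the first-order ODE for $v$, and the computation of the limiting value from $K(0)/(a_2\eta^{1-m})$ are all correct and match the structure of the argument in \cite{HK19singul} that the paper invokes.

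The gap is in the existence step. The bound $c_1\rho\le -P(\rho)\le c_2\rho$ on $P=q-(a_2/a_3)\overline{w}^{-m}$ is one order too weak to yield boundedness of $\overline{w}_{\rho\rho}$. Indeed, writing $H=(a_3\overline{w}P-a_1\rho)/q$, the bound $|P|\le C\rho$ only gives $|H|\le C'\rho$ and hence $|v|=|H|/\rho^2\le C'/\rho$; boundedness of $v$ would require $a_3\overline{w}P-a_1\rho=O(\rho^2)$, i.e.\ not just $P=O(\rho)$ but $P/\rho\to a_1/(a_3\eta)$ with an $O(\rho)$ remainder. A trapping of $P$ alone cannot see this.

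The fix is already implicit in the ODE you derived for $v$. Write it as $v_\rho+(b(\rho)/\rho^2)v=K(\rho)/\rho^2$ with $b(\rho)=(2+a_1)\rho+a_2\overline{w}^{1-m}\to a_2\eta^{1-m}<0$; the integrating factor $\mu(\rho)=\exp\bigl(\int_{\rho_0}^\rho b(s)s^{-2}\,ds\bigr)$ satisfies $\mu(\rho)\to+\infty$ as $\rho\to 0^+$ (the dominant contribution is $-a_2\eta^{1-m}/\rho\to+\infty$). From $(\mu v)'=\mu K/\rho^2$ one obtains $v(\rho)=\mu(\rho)^{-1}\bigl[\mu(\rho_0)v(\rho_0)-\int_\rho^{\rho_0}\mu K s^{-2}\,ds\bigr]$; the first term vanishes, and L'H\^opital's rule applied to the quotient $\int_\rho^{\rho_0}\mu K s^{-2}\,ds\big/\mu(\rho)$ gives the limit $-K(0)/b(0)=K(0)/(a_2\eta^{1-m})$ directly, with no prior boundedness assumption needed. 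This is exactly ``essentially L'Hospital rule'' as signalled at the end of the proof of Lemma~\ref{lm:lemma 3.2}, and it is the content of Lemma~2.3 of \cite{HK19singul} that the paper cites. Alternatively, a direct trapping argument on $v$ itself (not on $P$) using the same ODE works: if $v$ were unbounded above near $0$, the ODE forces $v_\rho>c/\rho^2$ on a half-neighbourhood, hence $v\to-\infty$, a contradiction; similarly below.
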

  
By Lemma \ref{w-rho-derivative-limit-at-origin-lem}, Lemma \ref{w-rho-2nd-derivative-limit-at-origin-lem} and Taylor's expansion for $\overline{w}$ and $\overline{w}_{\rho}$ near the origin (cf.  \cite{HK19singul}),  Theorem \ref{asymptotic expansion of self-similar solution at the origin thm} follows.

\section{Asymptotic large time behavior of singular solutions} \label{sec:asympt-large-time}

In this section we will prove the asymptotic large time behaviour of singular solutions of \eqref{eq-fde2-global-except-0}. We will first prove Theorem \ref{weighted contraction theorem} which is a weighted $L^1$-contraction with weight $\varphi_{\mu}$. 
   
\noindent{\it Proof of Theorem \ref{weighted contraction theorem}:}

\noindent We first observe that by an argument similar to the proof of Theorem 2.2 of \cite{TY22infini}, we have
\begin{align}\label{eq:one point singular:55} 
&\int_{\mathbb{R}^n\setminus\{0\}}|u-v| (x,t) \psi(x) \,\mathrm{d} x 
-\int_{\mathbb{R}^n\setminus\{0\}}|u_0-v_0| (x) \psi(x) \,\mathrm{d} x \notag\\
\le&\int_0^t\int_{\mathbb{R}^n\setminus\{0\}}|u^m-v^m|(x,s)\Delta\psi(x)\,\mathrm{d} x\,ds \quad\forall\psi\in C_0^{\infty}(\mathbb{R}^n),t>0.
\end{align}
We will now use \eqref{eq:one point singular:55} and a modification of the proof of Theorem 1.2 of \cite{HK17asympt} to prove the theorem. Since the proof is similar to the proof of Theorem 1.2 of \cite{HK17asympt}, we will only sketch the argument here. We choose $\xi \in C^{\infty}_0(\mathbb{R}^n )$ such that $0 \le \xi <1 , \;  \xi=1$ for $|x|\le 1 $, and $\xi=0$ for $|x| >2$. For any $R>2$, we let $\xi_R(x):= \xi(x/R)$. Putting $\psi(x)=\xi_R (x) \varphi_{\mu}(x)$ in \Cref{eq:one point singular:55}, we have
\begin{align}\label{u-v-difference-integral-ineqn}
&\int_{\mathbb{R}^n }|u-v| (x,t) \xi_R(x) \varphi_{\mu}(x) \,\mathrm{d} x 
-\int_{\mathbb{R}^n }|u_0-v_0| (x) \xi_R(x) \varphi_{\mu}(x) \,\mathrm{d} x \notag\\
\le&\int_0^t\int_{\mathbb{R}^n  }\frac{ |u^m-v^m|(x,s) }{m} (x,t) \Delta \left(  \xi_R(x) \varphi_{\mu}(x) \right) \,\mathrm{d} x \,ds\nonumber\\
=&\int_0^t\int_{\mathbb{R}^n } \frac{|u^m-v^m|(x,s) }{m} \{\varphi_{\mu}(x) \Delta \xi_R(x)+ 2\nabla \varphi_{\mu}(x) \cdot \nabla \xi_R(x) +\xi_R(x)\Delta \varphi_{\mu}(x)     \} \,\mathrm{d} x\,ds.
\end{align}

Then by \eqref{laplace-phi-negative}, \eqref{phi-asymptotic behaviour2},  \eqref{phi-derivative-asymptotic behaviour2}, \eqref{solns-lower-upper-bd} and \eqref{u-v-difference-integral-ineqn},
 \begin{align}\label{eq:one point singular:57} 
&\int_{\mathbb{R}^n }|u-v| (x,t) \xi_R(x) \varphi_{\mu}(x) \,\mathrm{d} x 
-\int_{\mathbb{R}^n }|u_0-v_0| (x) \xi_R(x) \varphi_{\mu}(x) \,\mathrm{d} x \notag\\
\le& C R^{-2-\mu} \int_0^t\int_{B_{2R}\setminus B_R} |u^m-v^m|(x,s)\,\mathrm{d} x\,ds, \quad\forall R>R_0\notag\\ 
\le&C R^{-2-\mu} \int_0^t\int_{B_{2R}\setminus B_R} V_{\lambda_2}(x,s)^m  \,\mathrm{d} x\,ds, \quad R>R_0. 
  \end{align}
 By an argument similar to the proof of Theorem 1.2 of \cite{HK17asympt},
\begin{align}\label{eq:one point singular:59} 
\int_0^t\int_{B_{2R}\setminus B_R}V_{\lambda_2}(x,s)^m  \,\mathrm{d} x \,\mathrm{d}s
\le&C\left\{R^{n+2 - \frac{\alpha}{\beta}} +R^2\log (tR^{-1/\beta}) + R^2t^{n\beta -\alpha} \right\} \quad 
\mbox{ if } R >t^{\beta}>0.
\end{align}
By \cref{eq:one point singular:57} and \eqref{eq:one point singular:59},
  \begin{align*}
 & \int_{\mathbb{R}^n } |u-v| (x,t) \xi_R(x) \varphi_{\mu}(x) \,\mathrm{d}x-\int_{\mathbb{R}^n } |u_0-v_0| (x) \xi_R(x) \varphi_{\mu}(x) \,\mathrm{d}x   \nonumber\\
\le& C_t \left(  R^{n-\frac{\alpha}{\beta} - \mu}  + R^{-\mu} \log R + R^{-\mu} \right)\quad\forall R>\max(R_0,t^{\beta})\\
\to&0\quad\mbox{ as }R \to \infty.
 \end{align*}
for some constant $C_t>0$ and \eqref{eq:one point singular:52} follows. By a similar argument we also get \eqref{eq:one point singular:54}
and the theorem follows.

{\hfill$\square$\vspace{6pt}}

We next recall Theorem 1.3 of \cite{HK17asympt}.
   
\begin{theorem}(Theorem 1.3 of \cite{HK17asympt})\label{existence thm6}
Let $n \ge 3,$ $0< m< \frac{n-2}{m}$, and $\frac{2}{1-m} < \gamma < \frac{n-2}{m}$. Let $u_0$ satisfies \Cref{eq-fde2-initial} for some constants $A_2>A_1>0$. Then there exists a unique solution $u$ of \eqref{eq-fde2-global-except-0}  satisfying \eqref{eq:one point singular:62}
where $V_{\lambda_i}$ for $i=1,2$, are given by \Cref{eq:V lambda} with $\alpha , \;  \beta$ satisfying \Cref{eq:alpha beta relation}, \eqref{eq:gamma slpha beta relation6}, and $\lambda_i = A_i ^{(1-m)\beta} $ for $i=1,2$, respectively. Moreover
\begin{equation*}
u_t \le \frac{u}{ (1-m)t}\quad \text{ in } (\mathbb{R}^n  \setminus  \{    0     \}) \times (0,\infty).
\end{equation*}
\end{theorem}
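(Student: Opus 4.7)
My plan is to construct $u$ as the limit of classical solutions on an annular exhaustion. For each integer $R>2$, I would choose $u_{0,R}\in C^{\infty}(\overline{\mathcal{A}_R})$ smoothing $u_0|_{\mathcal{A}_R}$, still satisfying $A_1|x|^{-\gamma}\le u_{0,R}(x)\le A_2|x|^{-\gamma}$, agreeing with $V_{\lambda_1}(\cdot,0)$ near $\partial B_{1/R}$ and with $V_{\lambda_2}(\cdot,0)$ near $\partial B_R$, and converging to $u_0$ in $L^1_{\mathrm{loc}}(\mathbb{R}^n\setminus\{0\})$ as $R\to\infty$. On $\mathcal{A}_R\times(0,\infty)$ I would solve the initial--boundary-value problem for \eqref{fde} with initial data $u_{0,R}$ and Dirichlet data $V_{\lambda_1}$ on $\partial B_{1/R}$ and $V_{\lambda_2}$ on $\partial B_R$. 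Since the data are bounded away from zero and infinity on $\overline{\mathcal{A}_R}$, the equation is uniformly parabolic and standard theory (cf.\ \cite{DK07degene}) gives a unique positive classical solution $u_R$. Comparison with $V_{\lambda_1}$ and $V_{\lambda_2}$, both of which solve \eqref{fde} classically on $\mathcal{A}_R\times(0,\infty)$, yields $V_{\lambda_1}\le u_R\le V_{\lambda_2}$. These bounds keep the equation uniformly parabolic on every compact subset of $(\mathbb{R}^n\setminus\{0\})\times(0,\infty)$, so interior Schauder estimates produce locally uniform $C^{2+\sigma,1+\sigma/2}$ bounds; a diagonal extraction then gives a limit $u\in C^{2,1}((\mathbb{R}^n\setminus\{0\})\times(0,\infty))$ satisfying \eqref{fde}, the sandwich \eqref{eq:one point singular:62}, and the $L^1_{\mathrm{loc}}$ initial condition \eqref{eq-u-initial-value}.

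\textbf{Uniqueness.} If $u,v$ are two solutions of \eqref{eq-fde2-global-except-0} satisfying \eqref{eq:one point singular:62}, I would apply Theorem \ref{weighted contraction theorem} with any $\mu\in(0,n-2)$. Since $u_0-v_0\equiv 0$ is trivially in $L^1(\varphi_\mu;\mathbb{R}^n\setminus\{0\})$, it yields
\begin{equation*}
\int_{\mathbb{R}^n}|u-v|(x,t)\,\varphi_\mu(x)\,\mathrm{d} x\le 0\quad\forall t>0,
\end{equation*}
which, together with $\varphi_\mu>0$ and the continuity of $u,v$, forces $u\equiv v$ on $(\mathbb{R}^n\setminus\{0\})\times(0,\infty)$.

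\textbf{Aronson--B\'enilan estimate.} This will follow from the scale invariance of \eqref{fde}. For $\mu>0$ set $u_\mu(x,t):=\mu^{-1/(1-m)}u(x,\mu t)$; by the scaling of \eqref{fde}, $u_\mu$ is again a classical solution on $(\mathbb{R}^n\setminus\{0\})\times(0,\infty)$. A direct computation using $\alpha=(2\beta-1)/(1-m)$ and $f_\lambda(\xi)=\lambda^{2/(1-m)}f_1(\lambda\xi)$ gives the key identity
\begin{equation*}
\mu^{-1/(1-m)}V_\lambda(x,\mu t)=V_{\lambda\mu^{-\beta}}(x,t),
\end{equation*}
so the rescaled self-similar barriers are again self-similar. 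For $\mu\ge 1$, since $\beta<0$ we have $\lambda\mu^{-\beta}\ge\lambda$, and the monotonicity $\lambda\mapsto f_\lambda$ from the remark after Theorem \ref{existence-self-similar-soln-thm} gives $V_{\lambda_i\mu^{-\beta}}\le V_{\lambda_i}$. Running the existence construction with $u_R$ replaced by $u_{R,\mu}$, on $\mathcal{A}_R\times(0,\infty)$ I have $u_{R,\mu}(\cdot,0)=\mu^{-1/(1-m)}u_{0,R}\le u_{0,R}=u_R(\cdot,0)$, and on $\partial B_{1/R}\cup\partial B_R$, $u_{R,\mu}=V_{\lambda_i\mu^{-\beta}}\le V_{\lambda_i}=u_R$. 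Classical parabolic comparison on the bounded annulus therefore yields $u_{R,\mu}\le u_R$; passing to the limit $R\to\infty$ gives
\begin{equation*}
\mu^{-1/(1-m)}u(x,\mu t)\le u(x,t)\quad\forall\mu\ge 1,\;(x,t)\in(\mathbb{R}^n\setminus\{0\})\times(0,\infty).
\end{equation*}
Since $u\in C^{2,1}$, differentiating in $\mu$ at $\mu=1^+$ produces $-u/(1-m)+tu_t\le 0$, the desired inequality.

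\textbf{Main obstacle.} The most delicate technical step will be the construction of smooth approximate data $u_{0,R}$ matching the Dirichlet boundary values of $V_{\lambda_1},V_{\lambda_2}$ while preserving the two-sided bound $A_1|x|^{-\gamma}\le u_{0,R}\le A_2|x|^{-\gamma}$; this is a standard but careful cut-off argument near $\partial B_{1/R}\cup\partial B_R$. Once this is in place, everything else is driven by parabolic comparison, interior Schauder estimates, and the weighted $L^1$ contraction of Theorem \ref{weighted contraction theorem}.
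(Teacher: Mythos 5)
The paper does not prove this statement: it is recalled verbatim from Theorem~1.3 of \cite{HK17asympt} with no argument given, so there is no in-paper proof to compare your attempt against. Your reconstruction — annular exhaustion with Dirichlet barriers $V_{\lambda_1}$, $V_{\lambda_2}$ plus interior Schauder compactness for existence, the weighted $L^1$-contraction of Theorem~\ref{weighted contraction theorem} for uniqueness, and the rescaling comparison $u_{R,\mu}\le u_R$ for the Aronson--B\'enilan inequality — is a natural and essentially correct route, and the key identity $\mu^{-1/(1-m)}V_\lambda(x,\mu t)=V_{\lambda\mu^{-\beta}}(x,t)$ does follow from $\alpha=(2\beta-1)/(1-m)$ and $f_\lambda(\xi)=\lambda^{2/(1-m)}f_1(\lambda\xi)$, as you say; the monotonicity of $\lambda\mapsto f_\lambda$ then closes the comparison.

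The one step you cannot leave implicit is recovering the initial trace \eqref{eq-u-initial-value} for the limit $u$. The interior Schauder estimates degenerate as $t\to 0$, so the locally uniform $C^{2,1}$ convergence of $u_R$ to $u$ only holds on $(\mathbb{R}^n\setminus\{0\})\times(\delta,\infty)$ for each $\delta>0$, and the bound $V_{\lambda_1}\le u\le V_{\lambda_2}$ by itself does not give $u(\cdot,t)\to u_0$ in $L^1_{\mathrm{loc}}$. To close this, test the equation for $u_R$ against $\psi\in C_c^\infty(\mathbb{R}^n\setminus\{0\})$ to get
\begin{equation*}
\Bigl|\int u_R(\cdot,t)\psi\,\mathrm{d}x-\int u_{0,R}\psi\,\mathrm{d}x\Bigr|\le \frac{1}{m}\int_0^t\int u_R^m|\Delta\psi|\,\mathrm{d}x\,\mathrm{d}s\le C_\psi\,t,
\end{equation*}
uniformly in $R$, because $u_R^m\le V_{\lambda_2}^m$ is bounded on $\operatorname{supp}\psi\times(0,1]$; letting $R\to\infty$ and then $t\to 0$ gives the initial trace in the distributional sense, and the two-sided barrier bound upgrades this to $L^1_{\mathrm{loc}}$. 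With this patch, and the boundary-layer smoothing of $u_{0,R}$ you flag as the main technical point, the proposal is a complete proof.

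Two small remarks: (i) when you invoke Theorem~\ref{weighted contraction theorem} for uniqueness there is no circularity, since its proof in the paper only uses Theorem~\ref{existence-self-similar-soln-thm} and the estimate \eqref{eq:one point singular:55} from \cite{TY22infini}, not Theorem~\ref{existence thm6}; (ii) in the Aronson--B\'enilan step you are not ``running the existence construction'' anew — $u_{R,\mu}$ is just the explicit rescaling of the already-built $u_R$, and the comparison you perform is exactly right.
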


We will now study the asymptotic large time behaviour of the singular solution $u$ of \eqref{eq-fde2-global-except-0} with initial value $u_0$ satisfying \Cref{eq-fde2-initial} for some constants $A_2>A_1>0$ and $n \le \gamma < \frac{ n-2}{m}$.  We first observe that the rescaled function $\widetilde{u}$ of the solution $u$  given by \eqref{eq:u tilde} satisfies 
\begin{equation}\label{eq:one point singular:65}
\widetilde{u}_{\tau}=\Delta(\widetilde{u} ^m /m) + \alpha \widetilde{u} + \beta y \cdot  \nabla \widetilde{u}\quad\mbox{ in }(\mathbb{R}^n\setminus\{0\})\times (-\infty,\infty).
\end{equation}
In particular the rescaled function $\widetilde{V}_{\lambda}$ for the function $V_{\lambda}$ given by \eqref{eq:V lambda} is equal to $f_{\lambda}(y)$.
We will now assume $n \le \gamma < \frac{ n-2}{m}$ for the rest of the paper. For results for the complimentary interval $\frac{2}{1-m} < \gamma < n$, we refer the readers to the paper \cite{HK17asympt} by K.M.~Hui and Soojung Kim. 

\begin{lemma}[Strong contraction principle for the rescaled functions]
\label{Strong contraction principle for the rescaled functions lemma}
Let $n\ge 3$, $0< m < \frac{n-2}{n}$,  $0<\mu<n-2$, $\beta < 0 , \;  \alpha=\frac{2\beta-1}{1-m}$ and let $\gamma$ satisfy \eqref{eq:gamma slpha beta relation6} and \eqref{gamma-range1}. 
Let $\widetilde{u}$,  $\widetilde{v}$, be solutions of \Cref{eq:one point singular:65} in $ (\mathbb{R}^n\setminus\{0\}) \times (0,\infty)$ with initial value $\widetilde{u}_0$,  $\widetilde{v}_0$, respectively such that
  \begin{equation}\label{eq:one point singular:64}
  f_{\lambda_d1} \le \widetilde{u},\widetilde{v} \le f_{\lambda_2} \quad \text{ in } (\mathbb{R}^n\setminus\{0\}) \times (0,\infty)
  \end{equation}
  for some constants $\lambda_1>\lambda_2>0$. Suppose that 
  \begin{equation}\label{bar-u-v-intitial-value-L1-norm}
    0 \not\equiv \widetilde{u}_0-\widetilde{v}_0 \in L^1 (\varphi_{\mu}; \mathbb{R}^n\setminus\{0\} ).
  \end{equation}
for some function  $\varphi_{\mu}$ given by \Cref{eq:varphi}.
 Then 
 \begin{equation*}
 \left \|    \widetilde{u}(\cdot,\tau) - \widetilde{v}(\cdot,\tau)    \right \|_{L^1 (\varphi_{\mu}; \mathbb{R}^n\setminus\{0\} )} < \left \|    \widetilde{u}_0(\cdot) - \widetilde{v}_{0}(\cdot)    \right \|_{L^1 (\varphi_{\mu}; \mathbb{R}^n\setminus\{0\} )} \quad \forall \tau>0.
 \end{equation*}
\end{lemma}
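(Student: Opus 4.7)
The plan is to work directly with the rescaled equation \eqref{eq:one point singular:65} rather than undoing the rescaling. Setting $w = \widetilde{u} - \widetilde{v}$, the difference satisfies
\[
w_\tau = \Delta\bigl((\widetilde{u}^m - \widetilde{v}^m)/m\bigr) + \alpha w + \beta y \cdot \nabla w \quad \text{in } (\mathbb{R}^n\setminus\{0\})\times(0,\infty).
\]
First I would establish a weighted $L^1$ differential inequality by multiplying by $\mathrm{sgn}_\epsilon(w)\,\xi_R(y)\,\varphi_\mu(y)$, where $\xi_R$ is the spatial cutoff used in the proof of \Cref{weighted contraction theorem}. Using Kato-type inequalities, two integrations by parts (the second being $\int\mathrm{sgn}(w)\,\beta y\cdot\nabla w\,\varphi_\mu = -n\beta\int|w|\varphi_\mu - \beta\int|w|\,y\cdot\nabla\varphi_\mu$), and the decay estimates on $f_{\lambda_1},f_{\lambda_2}$ near $0$ and $\infty$ to pass $\epsilon\to 0$ and $R\to\infty$ as in \Cref{weighted contraction theorem}, one arrives at
\[
I(\tau_1)-I(\tau_0) \;\le\; \int_{\tau_0}^{\tau_1}\!\bigl[J_1(s)+J_2(s)+J_3(s)\bigr]\,ds
\]
where $I(\tau):=\int|w|(y,\tau)\varphi_\mu(y)\,dy$, $J_1 = \int\tfrac{|\widetilde u^m-\widetilde v^m|}{m}\Delta\varphi_\mu\,dy$, $J_2 = -\beta\int|w|\,y\cdot\nabla\varphi_\mu\,dy$ and $J_3 = (\alpha-n\beta)\int|w|\varphi_\mu\,dy$.

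Next I would verify that each $J_i\le 0$. By \eqref{laplace-phi-negative}, $\Delta\varphi_\mu\le 0$, so $J_1\le 0$. Since $\varphi_\mu$ is radial decreasing, $y\cdot\nabla\varphi_\mu(y)=|y|\varphi_\mu'(|y|)\le 0$, and $-\beta>0$, so $J_2\le 0$. For $J_3$ a direct computation using $\alpha=(2\beta-1)/(1-m)$ and $\beta=1/(2-\gamma(1-m))$ yields
\[
\alpha - n\beta \;=\; \frac{\gamma-n}{2-\gamma(1-m)},
\]
whose denominator is negative (since $\gamma>2/(1-m)$) and whose numerator is $\ge 0$ under the standing assumption $\gamma\ge n$; hence $\alpha-n\beta\le 0$ and $J_3\le 0$. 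Thus $I(\tau)$ is non-increasing, recovering the non-strict contraction.

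For the strict inequality, I would argue by contradiction: suppose $I(\tau_1)=I(0)$ for some $\tau_1>0$. Monotonicity forces $I$ constant on $[0,\tau_1]$, hence $J_1(s)+J_2(s)+J_3(s)=0$ for a.e.\ $s\in(0,\tau_1)$, and since each $J_i\le 0$, each must vanish for a.e.\ $s$. Looking at $J_1=0$ together with $\Delta\varphi_\mu = -a_4\eta_1$, we conclude $|\widetilde u^m-\widetilde v^m|(\cdot,s)\,\eta_1=0$ a.e.; since $\eta_1>0$ on $\{|y|>2\}$, this forces $\widetilde u(\cdot,s)\equiv\widetilde v(\cdot,s)$ on $\{|y|>2\}$ for a.e. $s$, hence (by continuity of classical solutions) for every $s\in[0,\tau_1]$. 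Writing $(\widetilde u^m-\widetilde v^m)/m = a(y,s)\,w$ with $a=\int_0^1(\theta\widetilde u+(1-\theta)\widetilde v)^{m-1}d\theta>0$ continuous and bounded between positive constants on compact sets (by the barriers $f_{\lambda_1}\le\widetilde u,\widetilde v\le f_{\lambda_2}$), $w$ solves a uniformly parabolic equation in compact subsets of $(\mathbb{R}^n\setminus\{0\})\times(0,\tau_1)$. By parabolic unique continuation, vanishing on the nonempty open set $\{|y|>2\}\times(0,\tau_1)$ forces $w\equiv 0$ on the connected set $(\mathbb{R}^n\setminus\{0\})\times[0,\tau_1]$, so $\widetilde u_0\equiv\widetilde v_0$, contradicting \eqref{bar-u-v-intitial-value-L1-norm}.

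The main obstacle is the careful justification of the limiting procedure in Step 1: verifying that the boundary contributions from the cutoff $\xi_R$ (involving $|\widetilde u^m-\widetilde v^m|$ against $\nabla\xi_R\cdot\nabla\varphi_\mu$ and $\varphi_\mu\Delta\xi_R$) vanish as $R\to\infty$, which must use the precise asymptotic decay $f_{\lambda_i}(y)\sim D(\lambda_i)|y|^{-(n-2)/m}$ from \Cref{f-infty-behaviour} combined with $\varphi_\mu(y)\sim a_4|y|^{-\mu}$ from \eqref{phi-asymptotic behaviour2}, paralleling the estimate \eqref{eq:one point singular:59} in the proof of \Cref{weighted contraction theorem}. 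A secondary technical point is the invocation of parabolic unique continuation on the domain $(\mathbb{R}^n\setminus\{0\})\times(0,\tau_1)$, which is standard for smooth positive classical solutions but must be invoked carefully in the singular-at-origin setting.
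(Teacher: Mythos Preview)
Your outline follows the same integration-by-parts and sign analysis as the paper, but there is a genuine gap at the origin. Your test function $\mathrm{sgn}_\epsilon(w)\,\xi_R\,\varphi_\mu$ does \emph{not} vanish near $y=0$, and the barriers \eqref{eq:one point singular:64} only give $|w|\le f_{\lambda_2}-f_{\lambda_1}$, which still blows up like $|y|^{-\gamma}$ with $\gamma\ge n$ and is therefore not locally integrable. Consequently the integral $J_3=(\alpha-n\beta)\int|w|\varphi_\mu\,dy$ is not a priori finite, and the integration by parts you perform on the drift term $\beta y\cdot\nabla w$ is not justified. The paper handles this in two steps you omit: first it undoes the rescaling via \eqref{u-bar-u-v-defn} and invokes the Takahashi--Yamamoto estimate to obtain $\widetilde u-\widetilde v\in L^1(K\times(0,\tau))$ for every compact $K\subset\mathbb{R}^n$ \emph{including the origin}; second it inserts an inner cutoff $\phi_\varepsilon=\bar\phi_\varepsilon^{\,k}$ with $k>2/(1-m)$, so that a H\"older trick bounds the critical term $\int|\widetilde u^m-\widetilde v^m|\,\Delta\phi_\varepsilon$ by $\varepsilon^{n(1-m)-2}$ times a finite $L^1_{\mathrm{loc}}$ norm, which vanishes as $\varepsilon\to 0$. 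Your ``main obstacle'' paragraph discusses only the $R\to\infty$ limit; the origin is the harder end here.

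On the strict inequality, your unique-continuation argument is a legitimate alternative, and in fact slightly more robust than the paper's. The paper asserts that the integrand $\tfrac{\widetilde a}{m}\Delta\varphi_\mu-\beta y\cdot\nabla\varphi_\mu+(\alpha-n\beta)\varphi_\mu$ is strictly negative everywhere; but when $\gamma=n$ one has $\alpha-n\beta=0$, and on $\{|y|\le 1\}$ both $\Delta\varphi_\mu$ and $\nabla\varphi_\mu$ vanish, so the integrand is only $\le 0$ there. Your route through $J_1=0\Rightarrow w\equiv 0$ on $\{|y|>2\}$ followed by unique continuation covers this boundary case cleanly, at the cost of invoking a heavier tool.
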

\begin{proof}
We will use a modification of the proof of Lemma 4.2 of \cite{HK17asympt} to prove the lemma. 
Let 
\begin{equation}\label{u-bar-u-v-defn}
\left\{\begin{aligned}
&u(x,t)=t^{-\alpha}\4{u}(t^{-\beta}x,\tau)\quad\forall x\in\mathbb{R}^n\setminus\{0\},t=e^{\tau}\ge 1\\
&v(x,t)=t^{-\alpha}\4{v}(t^{-\beta}x,\tau)\quad\forall x\in\mathbb{R}^n\setminus\{0\},t=e^{\tau}\ge 1.
\end{aligned}\right.
\end{equation}
Then 
\begin{equation}\label{u-v-L1-norm-t=1}
u(x,1)=\4{u}_0(x),\quad v(x,1)=\4{v}_0(x), 
\end{equation}
and $u$, $v$ are solutions of \eqref{fde} in $(\mathbb{R}^n\setminus\{0\})\times (1,\infty)$ with initial 
values $\4{u}_0$,$\4{v}_0$, at time $t=1$. By \eqref{bar-u-v-intitial-value-L1-norm}, \eqref{u-bar-u-v-defn}, \eqref{u-v-L1-norm-t=1} and an argument similar to  the proof of Theorem 2.2 of \cite{TY22infini}, we have 
\begin{equation*}
u-v \in L^1(K \times (1,t))
\end{equation*}
for any $t>1$ and any compact subset $K\subset\mathbb{R}^n$. Hence 
\begin{equation}\label{loc-L1-bd1}
\widetilde{u}-\widetilde{v} \in L^1(K\times (0,\tau))
\end{equation}
for any $\tau>0$ and  any compact subset $K\subset\mathbb{R}^n$.       
Let $q=|\widetilde{u}-\widetilde{v}|$. Since there exists a constant $C>0$ such that 
\begin{equation*}
|a^m-b^m|\le C|a-b|^m\quad\forall a,b\in\mathbb{R}^+,
\end{equation*}
we have
\begin{equation}\label{loc-L1-bd2}
\int_0^{\tau}\int_K|\widetilde{u}^m-\widetilde{v}^m|\,dy\,ds\le C(|K|\tau)^{1-m}\left(\int_0^{\tau}\int_K|\widetilde{u}-\widetilde{v}|\,dy\,ds\right)^m<\infty
\end{equation}
for any $\tau>0$ and  any compact subset $K\subset\mathbb{R}^n$.
By \eqref{eq:one point singular:65} and the Kato inequality (\cite{DK07degene}, \cite{Kat13pertur}, \cite{HK17asympt}),
\begin{equation}\label{q-ineqn}
q_{\tau} \le \Delta (\widetilde{a}q/m) + \beta\mbox{div}(yq) + (\alpha-n\beta) q \quad \text{ in } \mathcal{D}'( (\mathbb{R}^n\setminus\{0\})  \times (0,\infty)).  
\end{equation}
where 
\begin{equation*}
mf_{\lambda_2}^{m-1}(y) \le \widetilde{a}(y,\tau) := \int_0^1 \frac{m \,\mathrm{d}s}{\{s\widetilde{u} +(1-s)\widetilde{v}\}^{1-m}}\le m f_{\lambda_1}^{m-1}(y).
\end{equation*}
We now let $\psi \in C_c^{\infty} (\mathbb{R}^n )$, $\ol{\phi}\in C^{\infty}(\mathbb{R}^n )$, $0\le\ol{\phi}\le 1$, such that $\ol{\phi}(x)=0\quad\forall |x|\le 1/2$ and $\ol{\phi}(x)=1\quad\forall |x|\ge 1$. Let $k>2/(1-m)$ be an integer and $\phi=\ol{\phi}^k$. For any $0<\3<1$, let $\phi_{\3}(x)=\phi(x/\3)$ and $\ol{\phi}_{\3}(x)=\ol{\phi}(x/\3)$. Then $\phi_{\3}=(\ol{\phi}_{\3})^k$, $\psi\phi_{\3}\in C_c^{\infty}(\mathbb{R}^n\setminus\{0\})$ for any $0<\3<1$ and supp$\,\psi\subset B_{R_1}$ for some constant $R_1>0$. 

Multiplying \eqref{q-ineqn} by $\psi\phi_{\3}$, $0<\3<1$, and integrating over $(\mathbb{R}^n\setminus\{0\})\times (0,\tau)$, $\tau>0$, we get 
\begin{align}\label{q-integral-ineqn}
&\int_{\mathbb{R}^n\setminus\{0\}}|\widetilde{u}-\widetilde{v}|(y,\tau)\psi(y)\phi_{\3}(y)\,dy
-\int_{\mathbb{R}^n\setminus\{0\}}|\widetilde{u}_0-\widetilde{v}_0|(y)\psi(y)\phi_{\3}(y)\,dy
\notag\\
\le&\frac{1}{m}\int_0^{\tau}\int_{\mathbb{R}^n\setminus\{0\}}|\widetilde{u}^m-\widetilde{v}^m|\Delta(\psi\phi_{\3})\,dy\,ds  
-\beta\int_0^{\tau}\int_{\mathbb{R}^n }y\cdot\nabla (\psi\phi_{\3}) |\widetilde{u}-\widetilde{v}|(y,s)\,dy\,ds\notag\\
&\qquad + (\alpha-n\beta) \int_0^{\tau}\int_{\mathbb{R}^n\setminus\{0\}}\psi\phi_{\3}|\widetilde{u}-\widetilde{v}| \,dy\,dt \nonumber\\
=&\frac{1}{m}\int_0^{\tau}\int_{\mathbb{R}^n\setminus\{0\}}\phi_{\varepsilon}\Delta \psi | \widetilde{u}^m-\widetilde{v}^m|\,dy\,ds
+\int_0^{\tau}\int_{\mathbb{R}^n\setminus\{0\}}\phi_{\varepsilon}\left(-\beta y\cdot\nabla\psi+  (\alpha-n\beta)\psi\right)|\widetilde{u}-\widetilde{v}|\,dy\,ds\notag\\
&\quad +\frac{1}{m}\int_0^{\tau}\int_{\mathbb{R}^n\setminus\{0\}}\left(2 \nabla \psi \cdot \nabla \phi_{\varepsilon} +\psi\Delta \phi_{\varepsilon} \right) | \widetilde{u}^m-\widetilde{v}^m|\,dy\,ds
 -\beta \int_0^{\tau}\int_{\mathbb{R}^n\setminus\{0\}}\psi (y\cdot\nabla \phi_{\varepsilon})|\widetilde{u}-\widetilde{v}| \,dy\,ds\notag\\
=&I_1+I_2+I_3+I_4.
\end{align} 
By \eqref{loc-L1-bd1} and \eqref{loc-L1-bd2} we have 
\begin{equation}\label{I1-limit}
I_1\to \frac{1}{m}\int_0^{\tau}\int_{\mathbb{R}^n\setminus\{0\}} | \widetilde{u}^m-\widetilde{v}^m|\Delta \psi\,dy\,ds\quad\mbox{ as }\3\to 0,
\end{equation}
\begin{equation}\label{I2-limit}
I_2\to \int_0^{\tau}\int_{\mathbb{R}^n\setminus\{0\}}\left(-\beta y\cdot\nabla\psi+  (\alpha-n\beta)\psi\right)|\widetilde{u}-\widetilde{v}|\,dy\,ds\quad\mbox{ as }\3\to 0,
\end{equation}
\begin{equation}\label{I4-limit}
|I_4|\le C\int_0^{\tau}\int_{\3/2\le|y|\le\3}|\widetilde{u}-\widetilde{v}| \,dy\,ds\to 0\quad\mbox{ as }\3\to 0
\end{equation}
and
\begin{align}\label{I3-limit}
|I_3|\le&C \int_0^{\tau}\int_{\3/2\le |y|\le\3}\left(|\nabla \phi_{\varepsilon}| \phi_{\varepsilon}^{-m} +|\Delta \phi_{\varepsilon}|\phi_{\varepsilon}^{-m}\right) (| \widetilde{u}-\widetilde{v}|\phi_{\varepsilon})^m\,dy\,ds\notag\\
\le&C \left\{\left(\int_{\3/2\le |y|\le\3}\left(|\nabla \phi_{\varepsilon}| \phi_{\varepsilon}^{-m}\right)^{\frac{1}{1-m}}\,dy\right)^{1-m}
+\left(\int_{\3/2\le |y|\le\3}\left(|\Delta\phi_{\varepsilon}| \phi_{\varepsilon}^{-m}\right)^{\frac{1}{1-m}}\,dy\right)^{1-m}\right\}\cdot\notag\\
&\quad\cdot\int_0^{\tau}\left(\int_{\3/2\le |y|\le\3}| \widetilde{u}-\widetilde{v}|\phi_{\varepsilon}\,dy\right)^m\,ds\notag\\
\le&C\tau^{1-m} \left\{\left(\int_{\3/2\le |y|\le\3}\left(|\nabla \phi_{\varepsilon}| \phi_{\varepsilon}^{-m}\right)^{\frac{1}{1-m}}\,dy\right)^{1-m}
+\left(\int_{\3/2\le |y|\le\3}\left(|\Delta\phi_{\varepsilon}| \phi_{\varepsilon}^{-m}\right)^{\frac{1}{1-m}}\,dy\right)^{1-m}\right\}\cdot\notag\\
&\quad\cdot\left(\int_0^{\tau}\int_{\3/2\le |y|\le\3}| \widetilde{u}-\widetilde{v}|\phi_{\varepsilon}\,dy\,ds\right)^m.
\end{align}
Since $k>\frac{2}{1-m}$, we have
\begin{equation*}
|\nabla \phi_{\varepsilon}| \phi_{\varepsilon}^{-m}=|\nabla\ol{\phi}_{\varepsilon}^k|\ol{\phi}_{\varepsilon}^{-km}=k\ol{\phi}_{\varepsilon}^{(1-m)k-1}|\nabla\ol{\phi}_{\varepsilon}|\le C|\nabla\ol{\phi}_{\varepsilon}|\le C\3^{-1}
\end{equation*}
and
\begin{equation*}
|\Delta \phi_{\varepsilon}| \phi_{\varepsilon}^{-m}
=|\Delta \ol{\phi}_{\varepsilon}^k|\ol{\phi}_{\varepsilon}^{-km}
=|k(k-1)\ol{\phi}_{\varepsilon}^{(1-m)k-2}|\nabla\ol{\phi}_{\varepsilon}|^2+\ol{\phi}_{\varepsilon}^{(1-m)k-1}|\Delta \ol{\phi}_{\varepsilon}|
\le C(|\nabla\ol{\phi}_{\varepsilon}|^2+|\Delta\ol{\phi}_{\varepsilon}|)\le C\3^{-2}.
\end{equation*}
Hence
\begin{align}\label{phi-integral-estimate1}
&\left(\int_{\3/2\le |y|\le\3}\left(|\nabla \phi_{\varepsilon}| \phi_{\varepsilon}^{-m}\right)^{\frac{1}{1-m}}\,dy\right)^{1-m}
+\left(\int_{\3/2\le |y|\le\3}\left(|\Delta\phi_{\varepsilon}| \phi_{\varepsilon}^{-m}\right)^{\frac{1}{1-m}}\,dy\right)^{1-m}\notag\\
\le &C\left(\int_{\3/2\le |y|\le\3}\3^{-\frac{2}{1-m}}\,dy\right)^{1-m}\notag\\
\le &C\3^{n(1-m)-2}\to 0\qquad\quad\mbox{ as }\3\to 0.
\end{align}
 By \eqref{I3-limit} and \eqref{phi-integral-estimate1},
\begin{equation}\label{I3-limit3}
I_3\to 0\quad\mbox{ as }\3\to 0.
\end{equation}
Letting $\3\to 0$ in  \eqref{q-integral-ineqn} by \eqref{I1-limit}, \eqref{I2-limit}, \eqref{I4-limit} and \eqref{I3-limit3}, we get
\begin{align}\label{q-integral-ineqn2}
&\int_{\mathbb{R}^n\setminus\{0\}}|\widetilde{u}-\widetilde{v}|(y,\tau)\psi(y)\,dy
-\int_{\mathbb{R}^n\setminus\{0\}}|\widetilde{u}_0-\widetilde{v}_0|(y)\psi(y)\,dy
\notag\\
\le&\frac{1}{m}\int_0^{\tau}\int_{\mathbb{R}^n\setminus\{0\}} | \widetilde{u}^m-\widetilde{v}^m|\Delta \psi\,dy\,ds
-\beta\int_0^{\tau}\int_{\mathbb{R}^n\setminus\{0\}} y\cdot\nabla\psi|\widetilde{u}-\widetilde{v}|\,dy\,ds\notag\\
&\qquad +  (\alpha-n\beta)\int_0^{\tau}\int_{\mathbb{R}^n\setminus\{0\}}\psi|\widetilde{u}-\widetilde{v}|\,dy\,ds
\end{align} 
holds for any $\psi \in C_c^{\infty} (\mathbb{R}^n )$. We now choose $\xi\in C_c^{\infty}(\mathbb{R}^n)$, $0\le \xi\le 1$, such that $\xi(x)=1\quad\forall |x|\le 1$ and $\xi(x)=0$ for any $|x|\ge 2$. For any $R>2$ we let $\xi_R(x)=\xi(x/R)$. Putting $\psi=\xi_R\varphi_{\mu}$ in \eqref{q-integral-ineqn2} we have

\begin{align}\label{q-integral-ineqn3}
&\int_{\mathbb{R}^n\setminus\{0\}}|\widetilde{u}-\widetilde{v}|(y,\tau)\xi_R\varphi_{\mu}\,dy
-\int_{\mathbb{R}^n\setminus\{0\}} |\widetilde{u}_0-\widetilde{v}_0|(y)\xi_R\varphi_{\mu}\,dy
\notag\\
\le&\frac{1}{m}\int_0^{\tau}\int_{\mathbb{R}^n\setminus\{0\}} | \widetilde{u}^m-\widetilde{v}^m|\Delta (\xi_R\varphi_{\mu})\,dy\,ds
-\beta\int_0^{\tau}\int_{\mathbb{R}^n\setminus\{0\}}y\cdot\nabla(\xi_R\varphi_{\mu})|\widetilde{u}-\widetilde{v}|\,dy\,ds\notag\\
&\qquad +  (\alpha-n\beta)\int_0^{\tau}\int_{\mathbb{R}^n\setminus\{0\}}|\widetilde{u}-\widetilde{v}|\xi_R\varphi_{\mu}\,dy\,ds\notag\\
\le&\int_0^{\tau} \int_{\mathbb{R}^n }\left(\frac{\widetilde{a}}{m} \Delta \varphi_{\mu} - \beta y \cdot \nabla \varphi_{\mu} + (\alpha-n\beta) \varphi_{\mu}\right)\xi_R q \,\mathrm{d}y \,\mathrm{d}s \nonumber\\
\quad & +  \int_0^{\tau} \int_{B_{2R} \setminus B_R}\left(\frac{\widetilde{a}}{m} \varphi_{\mu}\Delta \xi_R + 2\frac{\widetilde{a}}{m}\nabla \xi_R \cdot \nabla \varphi_{\mu} - \beta (y \cdot\nabla \xi_R)\varphi _{\mu}\right)q \,\mathrm{d}y \,\mathrm{d}s.
\end{align} 
By \eqref{eq:gamma slpha beta relation6}, \eqref{gamma-range1}, \eqref{laplace-phi-negative}, \eqref{phi-bd} and \eqref{phi'-negative},
\begin{equation}\label{phi-expression-negative}
\frac{\widetilde{a}}{m} \Delta \varphi_{\mu} - \beta y \cdot \nabla \varphi_{\mu} + (\alpha-n\beta) \varphi_{\mu}<0 \quad \forall x \in \mathbb{R}^n.
\end{equation}
Since $f_{\lambda_2}$ satisfies \eqref{f-infty-behaviour} for some constant $D(\eta)>0$ with $\eta=\lambda_2^{\frac{2}{1-m}}$, there exists a constant $R_1>R_0$ and a constant $C>0$ such that
\begin{equation}\label{f-decay-infinity}
f_{\lambda_2}(r)\le Cr^{-\frac{n-2}{m}}\quad\forall r\ge R_1.
\end{equation}
By \eqref{phi-asymptotic behaviour2}, \eqref{phi-derivative-asymptotic behaviour2}, \eqref{eq:one point singular:64} and \eqref{f-decay-infinity},
\begin{align}\label{eq:one point singular:66}
& \left| \int_0^{\tau}\int_{B_{2R}\setminus B_R}\left(\frac{\widetilde{a}}{m}\varphi_{\mu}\Delta \xi_R
+ 2\frac{\widetilde{a}}{m}\nabla \xi_R \cdot \nabla \varphi_{\mu} - \beta (y \cdot\nabla \xi_R) \varphi_{\mu} \right)q \,\mathrm{d}y \,\mathrm{d}s   \right|  \nonumber\\
\le& C\left( R^{-2-\mu}\int_{B_{2R} \setminus B_R}  f^m_{\lambda_2} \,\mathrm{d}y + R^{-\mu}\int_{B_{2R} \setminus B_R}  f_{\lambda_2} \,\mathrm{d}y\right) \tau \quad\forall R>R_1,\tau>0\nonumber\\
\le& C\left( R^{-2-\mu}\int_{B_{2R} \setminus B_R}  |x|^{-n+2} \,\mathrm{d}y + R^{-\mu}\int_{B_{2R} \setminus B_R}  |x|^{\frac{-n+2}{m}} \,\mathrm{d}y\right) \tau\quad\forall R>R_1,\tau>0\nonumber\\
\le& C \left(  R^{-\mu}+ R ^{ n-\frac{n-2}{m} -\mu}   \right)\tau\quad\forall R>R_1,\tau>0\notag\\
\to& 0\quad\mbox{ as }R\to\infty.
\end{align}
Hence letting $R\to\infty$ in \eqref{q-integral-ineqn3}, by \eqref{phi-expression-negative} and \eqref{eq:one point singular:66} we have
\begin{align*}
&\int_{\mathbb{R}^n\setminus\{0\}}|\widetilde{u}-\widetilde{v}|(y,\tau)\phi_{\mu}\,dy
-\int_{\mathbb{R}^n\setminus\{0\}}|\widetilde{u}_0-\widetilde{v}_0|(y)\phi_{\mu}\,dy
\notag\\
\le&\lim_{R\to\infty}\int_0^{\tau} \int_{\mathbb{R}^n\setminus\{0\} }\left(\frac{\widetilde{a}}{m} \Delta \varphi_{\mu} - \beta y \cdot \nabla \varphi_{\mu} + (\alpha-n\beta) \varphi_{\mu}\right)\xi_R q \,\mathrm{d}y \,\mathrm{d}s \nonumber\\
<&0
\end{align*} 
and the lemma follows.

\end{proof}

By an argument similar to the proof of Lemma 4.3 and Lemma 4.4 of \cite{HK17asympt} but with Lemma 4.2 of \cite{HK17asympt} being replaced by Lemma \ref{Strong contraction principle for the rescaled functions lemma}
in the proof there we have the following results.

\begin{lemma}\label{compare principle two lemma}
 Let $n\ge 3$, $0< m < \frac{n-2}{n}$, $0<\mu<n-2$, $\beta < 0 , \;  \alpha=\frac{2\beta-1}{1-m}$ and let $\gamma$ satisfy \eqref{eq:gamma slpha beta relation6} and \eqref{gamma-range1}. 
Let $\widetilde{u}$,  $\widetilde{v}$, be solutions of \Cref{eq:one point singular:65}  with initial value $\widetilde{u}_0$,  $\widetilde{v}_0$, respectively such that \eqref{eq:one point singular:64}
  holds for some constants $\lambda_1>\lambda_2>0$. Suppose that there exists a constant $\lambda_0 \in [\lambda_2,\lambda_1]$ such that 
  \begin{equation*}
  \widetilde{u}_0 - f_{\lambda_0} \in L^1(\varphi_{\mu} ;\mathbb{R}^n\setminus\{0\} ) 
  \end{equation*}
  and
  \begin{equation*}
  \lim_{i \to \infty} || \widetilde{u}(\cdot, \tau_i) - \widetilde{v}_0 || _{L^1 (\varphi_{\mu}; \mathbb{R}^n\setminus\{0\}) } =0 
  \end{equation*}
for some sequence $ \{    \tau_i     \}^{\infty}_{i=1} $ such that $\tau_i \to \infty$ as $i \to \infty$. Then 
\begin{equation*}
|| \widetilde{v}_0 - f_{\lambda_0} || _{L^1 (\varphi_{\mu};\mathbb{R}^n\setminus\{0\}) } \le || \widetilde{u}_0 -f_{\lambda_0} || _{L^1 (\varphi_{\mu};\mathbb{R}^n\setminus\{0\} ) }
\end{equation*}
and
\begin{equation*}
||\widetilde{v}(\cdot , \tau_i) - f_{\lambda_0} || _{L^1 (\varphi_{\mu}; \mathbb{R}^n\setminus\{0\} ) } = || \widetilde{v}_0 - f_{\lambda_0} || _{L^1 (\varphi_{\mu};\mathbb{R}^n\setminus\{0\} ) } \quad \forall \tau >0.
\end{equation*}

\end{lemma}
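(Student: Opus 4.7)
The plan is to derive both conclusions by applying the weighted $L^1$-contraction (Lemma \ref{Strong contraction principle for the rescaled functions lemma}) twice: once comparing $\widetilde{u}$ with the stationary solution $f_{\lambda_0}$, and once comparing the time-translates of $\widetilde{u}$ with $\widetilde{v}$. Note that since $\lambda_2 \le \lambda_0 \le \lambda_1$, the monotonicity of $\lambda \mapsto f_{\lambda}$ recalled in the remark after \Cref{existence-self-similar-soln-thm} gives $f_{\lambda_1} \le f_{\lambda_0} \le f_{\lambda_2}$, so the pointwise bound \eqref{eq:one point singular:64} is available for every pair we form. Moreover $f_{\lambda_0}$ corresponds to the self-similar solution $V_{\lambda_0}$ of the original equation and is therefore a stationary solution of the rescaled equation \eqref{eq:one point singular:65}.

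First I would apply Lemma \ref{Strong contraction principle for the rescaled functions lemma} to $\widetilde{u}$ and the stationary solution $f_{\lambda_0}$. The hypothesis $\widetilde{u}_0 - f_{\lambda_0} \in L^1(\varphi_{\mu};\mathbb{R}^n\setminus\{0\})$ is exactly what is needed, and the lemma gives that
\begin{equation*}
\tau \longmapsto \|\widetilde{u}(\cdot,\tau) - f_{\lambda_0}\|_{L^1(\varphi_{\mu};\mathbb{R}^n\setminus\{0\})}
\end{equation*}
is non-increasing. Taking $\tau = \tau_i$ and using that $\widetilde{u}(\cdot,\tau_i) \to \widetilde{v}_0$ in $L^1(\varphi_\mu)$ (with lower semicontinuity of the norm and dominated convergence, bounds being uniform by \eqref{eq:one point singular:64}), the limit
\begin{equation*}
L := \lim_{\tau \to \infty} \|\widetilde{u}(\cdot,\tau) - f_{\lambda_0}\|_{L^1(\varphi_{\mu};\mathbb{R}^n\setminus\{0\})}
\end{equation*}
exists and equals $\|\widetilde{v}_0 - f_{\lambda_0}\|_{L^1(\varphi_{\mu};\mathbb{R}^n\setminus\{0\})}$. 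The monotonicity then yields the first conclusion $\|\widetilde{v}_0 - f_{\lambda_0}\|_{L^1(\varphi_\mu)} \le \|\widetilde{u}_0 - f_{\lambda_0}\|_{L^1(\varphi_\mu)}$.

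Next, for fixed $\tau > 0$, I would introduce the time-translated solutions $\widetilde{u}_i(y,s) := \widetilde{u}(y,\tau_i + s)$, each of which solves \eqref{eq:one point singular:65} with initial datum $\widetilde{u}(\cdot,\tau_i)$ and still obeys the two-sided bound by $f_{\lambda_1}, f_{\lambda_2}$. Applying Lemma \ref{Strong contraction principle for the rescaled functions lemma} to the pair $(\widetilde{u}_i, \widetilde{v})$ (noting $\widetilde{u}_i(\cdot,0) - \widetilde{v}_0 = \widetilde{u}(\cdot,\tau_i) - \widetilde{v}_0 \in L^1(\varphi_\mu)$) gives
\begin{equation*}
\|\widetilde{u}(\cdot,\tau_i+\tau) - \widetilde{v}(\cdot,\tau)\|_{L^1(\varphi_{\mu};\mathbb{R}^n\setminus\{0\})} \le \|\widetilde{u}(\cdot,\tau_i) - \widetilde{v}_0\|_{L^1(\varphi_{\mu};\mathbb{R}^n\setminus\{0\})} \longrightarrow 0
\end{equation*}
as $i\to\infty$, so $\widetilde{u}(\cdot,\tau_i + \tau) \to \widetilde{v}(\cdot,\tau)$ in $L^1(\varphi_\mu)$. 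Combined with the monotone convergence to $L$ along the diverging sequence $\tau_i + \tau$,
\begin{equation*}
\|\widetilde{v}(\cdot,\tau) - f_{\lambda_0}\|_{L^1(\varphi_{\mu};\mathbb{R}^n\setminus\{0\})} = \lim_{i\to\infty} \|\widetilde{u}(\cdot,\tau_i+\tau) - f_{\lambda_0}\|_{L^1(\varphi_{\mu};\mathbb{R}^n\setminus\{0\})} = L = \|\widetilde{v}_0 - f_{\lambda_0}\|_{L^1(\varphi_{\mu};\mathbb{R}^n\setminus\{0\})},
\end{equation*}
which is the second conclusion.

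The main subtlety I expect is justifying that $f_{\lambda_0}$ legitimately plays the role of one of the solutions in Lemma \ref{Strong contraction principle for the rescaled functions lemma}: this rests on the fact that $V_{\lambda_0}$ is a classical solution of \eqref{fde} satisfying the pointwise sandwich (since $\lambda_2 \le \lambda_0 \le \lambda_1$) and that the lemma's proof only uses the two-sided bound and integrability of the difference, not strict inequality between the initial data. A second point requiring care is passing to the limit inside the $L^1(\varphi_\mu)$-norm along $\tau_i$ and $\tau_i+\tau$; this is handled by strong convergence in $L^1(\varphi_\mu)$ directly (in the first step from the hypothesis, in the second step from the contraction bound that tends to zero), so no additional uniform integrability argument is needed.
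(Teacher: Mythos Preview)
Your proposal is correct and follows essentially the same route as the paper, which simply invokes the argument of Lemma~4.3 of \cite{HK17asympt} with the strong contraction principle (Lemma~\ref{Strong contraction principle for the rescaled functions lemma}) substituted in. The only cosmetic point is that Lemma~\ref{Strong contraction principle for the rescaled functions lemma} is stated with a strict inequality under the hypothesis $\widetilde u_0\not\equiv\widetilde v_0$, whereas you use only the non-strict monotonicity; this is of course immediate (the equality case being trivial), and you already flag it in your closing remarks.
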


\begin{lemma}\label{limit-lemma 2}
Let $n\ge 3$, $0< m < \frac{n-2}{n}$, $\beta < 0 , \;  \alpha=\frac{2\beta-1}{1-m}$ and let $\gamma$ satisfy \eqref{eq:gamma slpha beta relation6} and \eqref{gamma-range1}.  Let $u_0$ satisfy \Cref{eq-fde2-initial} and \eqref{u0-L1-bd5}
for some constants $A_2\ge A_0\ge A_1>0$ and $0< \mu < n-2$. Let $u$ be the solution of \Cref{fde} which satisfies \Cref{eq:one point singular:62} with $\lambda_i=A_i^{(1-m)\beta}$ for $i=1,2$,  and let $\widetilde{u}(y,\tau)$ be given by \Cref{eq:u tilde}. Let $ \{    \tau_i     \}_{i=1}^{\infty}$ be a sequence such that $\tau_i \to \infty$ as $i \to \infty$ and 
\begin{equation*}
\widetilde{u}_i(\cdot,\tau)  :=  \widetilde{u}(\cdot,\tau_i+ \tau) \quad \forall \tau \in \mathbb{R}.
\end{equation*} 
Then there exists a subsequence of $ \{     \widetilde{u}_i     \}_{i=1}^{\infty} $, which we still denote by $ \{    \widetilde{u}_i     \}_{i=1}^{\infty}$, and an enternal solution $\widetilde{v}$ of \Cref{eq:one point singular:65} in $(\mathbb{R}^n \setminus  \{    0     \} \times (-\infty,\infty)$ such that $\widetilde{u}_i$ converges to $\widetilde{v}$ uniformly on every compact subset of $(\mathbb{R}^n \setminus \{0 \} \times (-\infty,\infty)$ as $i \to \infty$. Moreover 
\begin{equation*}
\widetilde{u}(\cdot,0) - f_{\lambda_0} \in L^1(\varphi_{\mu} ; \mathbb{R}^n\setminus\{0\} )
\end{equation*}
where $\lambda_0 :=A_0^{(1-m)\beta}$ and 
\begin{equation*}
\lim_{i \to \infty} || \widetilde{u}_i(\cdot,\tau) - \widetilde{v}(\cdot,\tau)|| _{L^1(\varphi_{\mu};\mathbb{R}^n\setminus\{0\} )}=0 \quad \tau \in \mathbb{R}.
\end{equation*}
 
\end{lemma}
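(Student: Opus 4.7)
The plan is to proceed in three stages paralleling Lemma 4.4 of \cite{HK17asympt}, with extra attention devoted to the inner tail near the origin, which is now delicate because $\gamma\ge n$ makes $|y|^{-\gamma}$ non-integrable on any neighbourhood of $0$.

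First I would establish local uniform convergence to an eternal solution. From \eqref{eq:one point singular:62}, $f_{\lambda_1}\le \widetilde{u}_i\le f_{\lambda_2}$ on $(\mathbb{R}^n\setminus\{0\})\times\mathbb{R}$, so on every compact $K\subset\mathbb{R}^n\setminus\{0\}$ the sequence is bounded away from $0$ and from $\infty$ uniformly in $i$. Rewriting \eqref{eq:one point singular:65} in non-divergence form makes it uniformly parabolic on $K\times(-T,T)$; parabolic Schauder (or Krylov--Safonov followed by Schauder) estimates give uniform $C^{2+\alpha,1+\alpha/2}$ bounds on compactly contained sub-regions, and Arzel\`a-Ascoli together with a diagonal argument produces a subsequence (still denoted $\widetilde{u}_i$) converging uniformly on every compact subset of $(\mathbb{R}^n\setminus\{0\})\times\mathbb{R}$ to a classical eternal solution $\widetilde{v}$ of \eqref{eq:one point singular:65} satisfying $f_{\lambda_1}\le\widetilde{v}\le f_{\lambda_2}$.

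Next I would verify $\widetilde{u}(\cdot,0)-f_{\lambda_0}\in L^1(\varphi_\mu;\mathbb{R}^n\setminus\{0\})$. By the remark after \eqref{eq:V lambda}, $V_{\lambda_0}$ is a solution of \eqref{fde} with initial data $A_0|x|^{-\gamma}$, and both $u$ and $V_{\lambda_0}$ satisfy the self-similar sandwich \eqref{solns-lower-upper-bd}. Hypothesis \eqref{u0-L1-bd5} places $u_0-A_0|\cdot|^{-\gamma}$ in $L^1(\varphi_\mu)$, so Theorem \ref{weighted contraction theorem} applied to the pair $(u,V_{\lambda_0})$ at $t=1$ yields
\begin{equation*}
\bigl\|\widetilde{u}(\cdot,0)-f_{\lambda_0}\bigr\|_{L^1(\varphi_\mu)}=\bigl\|u(\cdot,1)-V_{\lambda_0}(\cdot,1)\bigr\|_{L^1(\varphi_\mu)}\le\bigl\|u_0-A_0|\cdot|^{-\gamma}\bigr\|_{L^1(\varphi_\mu)}<\infty.
\end{equation*}
Lemma \ref{Strong contraction principle for the rescaled functions lemma} applied with $f_{\lambda_0}$ as the second (stationary) solution then shows that $\tau\mapsto \|\widetilde{u}(\cdot,\tau)-f_{\lambda_0}\|_{L^1(\varphi_\mu)}$ is nonincreasing in $\tau\ge 0$ and converges to some $L\ge 0$, so $\|\widetilde{u}_i(\cdot,\tau)-f_{\lambda_0}\|_{L^1(\varphi_\mu)}\to L$ for every fixed $\tau\in\mathbb{R}$.

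Finally, for the weighted $L^1$-convergence $\widetilde{u}_i(\cdot,\tau)\to\widetilde{v}(\cdot,\tau)$, pointwise a.e.\ convergence together with Fatou's lemma gives $\|\widetilde{v}(\cdot,\tau)-f_{\lambda_0}\|_{L^1(\varphi_\mu)}\le L$, and the Brezis-Lieb lemma applied in $L^1(\varphi_\mu\,\mathrm{d}y)$ to $\widetilde{u}_i-f_{\lambda_0}\to\widetilde{v}-f_{\lambda_0}$ reduces the conclusion to the matching equality $\|\widetilde{v}(\cdot,\tau)-f_{\lambda_0}\|_{L^1(\varphi_\mu)}=L$. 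The main obstacle is the enlarged range $\gamma\ge n$: near $0$ one only has the bound $|\widetilde{u}_i-f_{\lambda_0}|\le f_{\lambda_2}-f_{\lambda_1}\sim(A_2-A_1)|y|^{-\gamma}$, which fails to lie in $L^1(\varphi_\mu;B_r(0))$, so a priori mass of $\widetilde{u}_i-f_{\lambda_0}$ could escape into a neighbourhood of the origin. I would close this gap by iterating the Brezis-Lieb identity at times $\tau$ and $\tau+s$, combining with the weighted contraction $\|\widetilde{u}_i(\cdot,\tau+s)-\widetilde{v}(\cdot,\tau+s)\|_{L^1(\varphi_\mu)}\le \|\widetilde{u}_i(\cdot,\tau)-\widetilde{v}(\cdot,\tau)\|_{L^1(\varphi_\mu)}$ from Lemma \ref{Strong contraction principle for the rescaled functions lemma} to force $\tau\mapsto\|\widetilde{v}(\cdot,\tau)-f_{\lambda_0}\|_{L^1(\varphi_\mu)}$ to be constant in $\tau$; the strict form of that lemma then rules out $\widetilde{v}\not\equiv f_{\lambda_0}$, and the residual tightness near the origin I would obtain by localising the derivation identity \eqref{eq:one point singular:55} with the same cutoff $\phi_\varepsilon$ used in the proof of Lemma \ref{Strong contraction principle for the rescaled functions lemma} (where the choice $k(1-m)>2$ tames $|\nabla\phi_\varepsilon|\phi_\varepsilon^{-m}$ and $|\Delta\phi_\varepsilon|\phi_\varepsilon^{-m}$), showing that the contribution of $|y|<\varepsilon$ to $\|\widetilde{u}_i-f_{\lambda_0}\|_{L^1(\varphi_\mu)}$ vanishes as $\varepsilon\to 0$ uniformly in $i$, thereby closing the loop and giving the desired convergence.
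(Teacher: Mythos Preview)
The paper provides no proof of this lemma beyond citing Lemma~4.4 of \cite{HK17asympt}, so there is little to ``compare'' directly; your proposal is far more detailed than what the paper offers, and your first two paragraphs (parabolic compactness for the eternal limit, and the weighted contraction of Theorem~\ref{weighted contraction theorem} to put $\widetilde{u}(\cdot,0)-f_{\lambda_0}$ in $L^1(\varphi_\mu)$) are correct and match what one expects the referenced argument to be.

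The gap is in your third paragraph. Your Brezis--Lieb computation is valid and gives, at each fixed $\tau$,
\[
\lim_{i\to\infty}\|\widetilde{u}_i(\cdot,\tau)-\widetilde{v}(\cdot,\tau)\|_{L^1(\varphi_\mu)}
= L-\|\widetilde{v}(\cdot,\tau)-f_{\lambda_0}\|_{L^1(\varphi_\mu)}.
\]
Combining the non-strict contraction for the pair $(\widetilde{u}_i,\widetilde{v})$ with that for $(\widetilde{v},f_{\lambda_0})$ does force $\tau\mapsto\|\widetilde{v}(\cdot,\tau)-f_{\lambda_0}\|_{L^1(\varphi_\mu)}$ to be constant, and the strict form then forces $\widetilde{v}\equiv f_{\lambda_0}$, hence this constant is $0$. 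But then the displayed limit equals $L$, not $0$: you have reduced the lemma's weighted-$L^1$ claim to the assertion $L=0$, which is already the $L^1(\varphi_\mu)$ conclusion of Theorem~\ref{asymptotic large time behaviour of solution thm}. Your ``residual tightness'' step does not supply this. The cutoff $\phi_\varepsilon$ in the proof of Lemma~\ref{Strong contraction principle for the rescaled functions lemma} controls $\int_{|y|>\varepsilon}q\,\varphi_\mu$, not $\int_{|y|<\varepsilon}q\,\varphi_\mu$; and the error terms $I_3,I_4$ there carry factors of $\tau^{1-m}$ and $\int_0^\tau\!\int q$ (see \eqref{I3-limit}, \eqref{I4-limit}), so they are not uniform in $i$ as $\tau_i\to\infty$. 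In the regime $\frac{2}{1-m}<\gamma<n$ of \cite{HK17asympt} this issue never arises because $(f_{\lambda_2}-f_{\lambda_1})\varphi_\mu\in L^1(\mathbb{R}^n)$ and dominated convergence finishes the job; for $\gamma\ge n$ that dominating function fails at the origin and a genuinely new ingredient is needed. One workable route for $\gamma>n$ is to read off from the proof of Lemma~\ref{Strong contraction principle for the rescaled functions lemma} the differential inequality $h'(\tau)\le(\alpha-n\beta)h(\tau)$ with $\alpha-n\beta=(\gamma-n)\beta<0$, which gives exponential decay of $h$ and hence $L=0$; the endpoint $\gamma=n$ (where $\alpha-n\beta=0$ and $\varphi_\mu$ is constant on $B_1$, so the dissipation vanishes there) needs a separate treatment that your sketch does not supply.
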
 

\noindent{\it Proof of Theorem \ref{asymptotic large time behaviour of solution thm}:}

\noindent By the same argument as the proof of Theorem 1.4 of \cite{HK17asympt} but with Theorem 1.3, Lemma 4.2, Lemma 4.3 and Lemma 4.4 of \cite{HK17asympt} being replaced by Theorem \ref{existence thm6}, Lemma \ref{Strong contraction principle for the rescaled functions lemma}, Lemma \ref{compare principle two lemma} and Lemma \ref{limit-lemma 2} of this paper
the theorem follows.

{\hfill$\square$\vspace{6pt}}

\end{document}